\documentclass[a4paper,reqno,11pt]{article}

\hoffset=-15mm
\voffset=-10mm
\setlength{\textwidth}{16cm}
\setlength{\textheight}{24cm}
\raggedbottom




\usepackage[english]{babel}
\usepackage{epsfig}
\usepackage{amsfonts}
\usepackage{amsmath}
\usepackage{amssymb}
\usepackage{amsthm}
\usepackage{mathrsfs}
\usepackage{color,amscd}
\usepackage{currfile}
\usepackage{euscript}
\usepackage{mathtools}
\usepackage{tikz}
\usetikzlibrary{arrows.meta}
\usepackage{tikz-cd}
\usepackage[all]{xy}



\let\oldcolor\color
\renewcommand{\color}[1]{\oldcolor{#1}}  


\usepackage{amsthm}

\newtheorem{theorem}{Theorem}[section]
\newtheorem{lemma}[theorem]{Lemma}

\newtheorem{corollary}[theorem]{Corollary}

\theoremstyle{definition}
\newtheorem{definition}[theorem]{Definition}
\newtheorem{remark}[theorem]{Remark}



\newcommand{\C}{\mbb{C}}
\newcommand{\I}{\mc{I}}

\newcommand{\N}{\mbb{N}}

\newcommand{\R}{\mbb{R}}
\newcommand{\OO}{\Omega}




\newcommand{\mbb}{\mathbb}
\newcommand{\mc}{\mathcal}

\newcommand{\mr}{\mathrm}

\newcommand{\mscr}{\mathscr}

\newcommand{\sq}{\mbb{S}}

\newcommand{\bs}{{\tiny $\blacksquare$}}

\newcommand{\ui}{i}
\newcommand{\pa}{\mc{P}}




\newcommand\HH{\mathbb H}

\newcommand{\debar}{\overline{\partial}}
\newcommand{\debarI}{\debar_I}
\newcommand{\debarIn}{\debar_I^{\,n}}
\newcommand{\thetabar}{{\overline{\vartheta}\,}}
\newcommand{\thetabarm}{\overline{\vartheta}_m}

\newcommand{\sbs}{\mr{sl}}



\title{\bf Slice-by-slice and global smoothness of \\slice regular and polyanalytic functions}

\author{
\textsc{Riccardo Ghiloni}}

\date{
{\small
\textit{Department of Mathematics, University of Trento, I-38123, Povo-Trento, Italy}} 
\\
{\small \texttt{riccardo.ghiloni@unitn.it}}
}



\begin{document}


\maketitle



\begin{abstract}
The concept of slice regular function over the real algebra $\HH$ of quaternions is a genera\-lization of the notion of holomorphic function of a complex variable. Let $\OO$ be an open subset of $\HH$, which intersects $\R$ and is invariant under rotations of $\HH$ around $\R$. A~function $f:\OO\to\HH$ is slice regular if it is of class $\mscr{C}^1$ and, for all complex planes $\C_I$ spanned by $1$ and a quaternionic imaginary unit~$I$, the restriction $f_I$ of $f$ to $\OO_I=\OO\cap\C_I$ satisfies the Cauchy-Riemann equations associated to $I$, i.e., $\debarI f_I=0$ on $\OO_I$, where $\debarI=\frac{1}{2}\big(\frac{\partial}{\partial\alpha}+I\frac{\partial}{\partial\beta}\big)$. Given any positive natural number $n$, a function $f:\OO\to\HH$ is called slice polyanalytic of order $n$ if it is of class $\mscr{C}^n$ and $\debarIn f_I=0$ on $\OO_I$ for all $I$. We define global slice polyanalytic functions of order $n$ as the functions $f:\OO\to\HH$, which admit a decomposition of the form $f(x)=\sum_{h=0}^{n-1}\overline{x}^hf_h(x)$ for some slice regular functions $f_0,\ldots,f_{n-1}$. Global slice polyanalytic functions of any order $n$ are slice polyanalytic of the same order $n$. The converse is not true: for each $n\geq2$, we give examples of slice polyanalytic functions of order $n$, which are not global.

The aim of this paper is to study the continuity and the differential regularity of slice regular and global slice polyanalytic functions viewed as solutions of the slice-by-slice diffe\-rential equations $\debarIn f_I=0$ on $\OO_I$ and as solutions of their global version $\thetabar^nf=0$ on $\OO\setminus\R$. Our quaternionic results extend to the monogenic case.

\vspace{1em}

\noindent \emph{MSC2020:} Primary 30G35; Secondary 32A30, 35B65

\vspace{.5em}

\noindent \emph{Keywords:} Slice regular functions; Global slice polyanalytic functions; Functions of hypercomplex variables; Generalized Cauchy-Riemann equations; Smoothness of solutions to~PDEs
\end{abstract}


\section{Introduction}

In 2006 Gentili and Struppa \cite{GeSt2006CR} extended to quaternions the notion of holomorphic function of a complex variable by means of a slice-by-slice procedure. The real algebra $\HH$ of quaternions decomposes into the union of the complex planes $\C_I$ spanned by $1$ and the quaternionic imaginary units $I$. Equip each `complex slice' $\C_I$ of $\HH$ with the complex coordinates $\alpha+I\beta$, where $\alpha,\beta\in\R$. Consider a non-empty open subset $\OO$ of $\HH$, which intersects $\R$ and is circular, i.e., invariant under the rotations of $\HH$ around $\R$. A function $f:\OO\to\HH$ is said to be \emph{slice regular} if it is of class~$\mscr{C}^1$ in the usual real sense and, for each quaternionic imaginary unit $I$, the restriction $f_I$ of $f$ to $\OO_I:=\OO\cap\C_I$ satisfies the Cauchy-Riemann equations associated to $\C_I$, i.e., $\debarI f_I=0$ on $\OO_I$, where $\debar_I:=\frac{1}{2}\big(\frac{\partial}{\partial\alpha}+I\frac{\partial}{\partial\beta}\big)$. This notion of regularity of $f$ has a hybrid nature. On one hand, it is global in the sense that $f$ is of class $\mscr{C}^1$ on the whole genuine $4$-dimensional open set $\OO$. On the other, it requires that $f$ is a solution of the slice-by-slice PDEs $\debarI f_I=0$ on each $2$-dimensional open subset $\OO_I$ of $\C_I$. A key result of Gentili and Struppa \cite{GeSt2007Adv} asserts that, if $x_0$ is a point of $\OO\cap\R$, then the slice regular function $f:\OO\to\HH$ admits a quaternionic series expansion $f(x)=\sum_{h\in\N}(x-x_0)^ha_h$ with $a_h\in\HH$, locally at $x_0$ in $\OO$. Here the point is that $f$ admits such an expansion slice-by-slice, i.e., $f_I(x)=\sum_{h\in\N}(x-x_0)^ha_h^I$, because each $f_I$ is holomorphic; however, the coefficients $a_h^I=\frac{1}{h!}\frac{\partial^hf_I}{\partial\alpha^h}(x_0)=\frac{1}{h!}\frac{\partial^hf}{\partial\alpha^h}(x_0)$ do not depend on~$I$. This argument continues to work if we replace the condition `$f$ is of class $\mscr{C}^1$ on $\OO$' with the weaker condition `$f$ is a slice-by-slice $\mscr{C}^1$ function', i.e., $f_I$ is of class $\mscr{C}^1$ on $\OO_I$ for all~$I$. As a consequence, if $f:\OO\to\HH$ is a slice-by-slice $\mscr{C}^1$ function such that $\debarI f_I=0$ on $\OO_I$ for all $I$, then it admits a quaternionic series expansion locally at each real point of $\OO$; in particular, it is real analytic (and hence of class $\mscr{C}^1$) on a circular open neighborhood $U$ of $\OO\cap\R$ in $\OO$. Thus, $f$ is slice regular on $U$, because it is of class $\mscr{C}^1$ on $U$ and $\debarI f_I=0$ on $U_I$ for all $I$ (see Theorem \ref{thm:GP2013-sbs} for details). This is an example of affirmative answer (at least on $U$) to the following smoothness problem, we study in this paper:
\begin{itemize}
\item {\it Is it possible to give equivalent definitions of slice regularity for $f:\OO\to\HH$, weakening the condition of global $\mscr{C}^1$ regularity of $f$ on $\OO$ and/or making use of global versions of the slice-by-slice PDEs `$\;\debarI f_I=0$ on $\OO_I$ for all $I$'?}
\end{itemize}

This problem has several facets and, at the moment, it is a bit intricate. In \cite{AIM2011}, it was introduced the concepts of slice function $f=\I(F):\OO\to\HH$ induced by a stem function $F:\OO_i\to\HH\otimes_\R\C$, and of the slice derivative $\frac{\partial f}{\partial x^c}:=\I\big(\frac{\partial F}{\partial\overline{z}}\big):\OO\to\HH$, provided $F$ is of class~$\mscr{C}^1$. If this is the case, i.e., $f=\I(F)$ is a slice function with $F$ of class $\mscr{C}^1$, then we know that $\frac{\partial f}{\partial x^c}(x)=\debarI f_I(x)$ for all $I$ and for all $x\in\OO_I$. A global differential operator $\thetabar$, defined on the space of $\mscr{C}^1$ functions from $\OO\setminus\R$ to $\HH$, appeared in \cite{global2013}. This operator has the following basic property: given any $g:\OO\setminus\R\to\HH$ of class $\mscr{C}^1$, it holds $\thetabar g(x)=\debarI(g|_{\OO_I\setminus\R})(x)$ for all $I$ and for all $x\in\OO_I\setminus\R$. Consequently, slice regular functions on $\OO$ can be interpreted as the solutions $g:\OO\setminus\R\to\HH$ of the PDEs $\thetabar g=0$, which admits an extension $f:\OO\to\HH$ of class~$\mscr{C}^1$ on the whole $\OO$. In \cite{CGS2013}, the authors introduced the global operator $G$ defined on the space of $\mscr{C}^1$ functions from the whole~$\OO$ to $\HH$. The operator $G$ is strongly related to $\thetabar$; indeed, $G=2|\mr{Im}(x)|^2\thetabar$ on $\OO\setminus\R$, where $|\mr{Im}(x)|$ is the Euclidean norm in $\HH=\R^4$ of the imaginary part $\mr{Im}(x)$ of $x$.  In the same paper \cite{CGS2013}, it is proved that the set of distributional solutions of PDEs $G(f)=0$ on $\OO$ strictly contains the set of slice regular functions on $\OO$. Recently, a generalization of the preceding smoothness problem has come into play. In \cite{ADS1}, the authors defined the notion of slice polyanalytic functions. Given a positive natural number~$n$, a function $f:\OO\to\HH$ is called \emph{slice polyanalytic of order $n$} if it is of class $\mscr{C}^n$ and $\debarIn f_I=0$ on $\OO_I$ for all $I$. We introduce the concept of \emph{global} slice polyanalytic function of positive order $n$. We say that a function $f:\OO\to\HH$ is global slice polyanalytic of order $n$ if $f(x)=\sum_{h=0}^{n-1}\overline{x}^hf_h(x)$ for some slice regular functions $f_0,\ldots,f_{n-1}$. By the very definitions, the set of slice polyanalytic functions of order~$1$, the set of global slice polyanalytic functions of order $1$ and the set of slice regular functions coincide. However, the set of global slice polyanalytic functions of any order $n\geq2$ is strictly contained in the set of slice polyanalytic functions of the same order $n$.  

We refer to the monograph \cite{GeStoSt2013} for a survey of quaternionic slice analysis, to the papers \cite{AlgebraSliceFunctions,singular} for a recent account of the theory on real alternative $^*$-algebras and to the recent paper \cite{GS-2020-2} for the theory of slice regular functions on not necessarily circular domains. A theory of slice regular functions in several variables was recently introduced in \cite{GhPe2020}. It is worth recalling that the theory of slice regular functions has significant applications in various areas of mathematics, as quaternionic functional calculus (see e.g. \cite{libroverde,GMP,GMP2017,MO}), twistor theory (see e.g. \cite{alta-1,alta-sarf,GSS2014}), operator semigroup theory (see e.g. \cite{CoSa2011-evolution,GhRe2016,GhRe2018}) and mechanism science (see \cite{GST2020}).  

The paper is organized as follows. In Section \ref{subsec:srf} we study the mentioned smoothness problem for slice regular functions. In Theorem \ref{thm:GP2013}, we give a detailed description of the equivalences existing between the various interpretations of slice regularity \cite{AIM2011,global2013,CGS2013} we mentioned above. In Theorem \ref{thm:GP2013-sbs} we improve Theorem \ref{thm:GP2013} by introducing new slice-by-slice conditions equivalent to the slice regularity. In Section \ref{subsec:poly}, we introduce the notion of global slice polyanalytic function of any positive order $n$. We give examples of slice polyanalytic functions of each order $n\geq2$, which are not global slice polyanalytic functions of order $n$, see Remark \ref{1.14}. These examples also prove that slice polyanalytic functions of any order $n\geq2$ do not satisfy neither the identity principle nor the representation formula; consequently, some results of \cite{ADS1} are correct only when $n=1$. Finally, we investigate the above smoothness problem for global slice polyanalytic functions. In Section \ref{subsec:monogenic} we extend to the monogenic case the quaternionic results of the preceding two sections. The proof of our results are postponed to Section \ref{sec:proofs}.


\section{The results}

\subsection{Smoothness of slice regular functions}\label{subsec:srf}

As we said, $\HH$ denotes the real algebra of quaternions. Let $\sq_\HH=\{I\in\HH:I^2=-1\}$ be the $2$-sphere of quaternionic imaginary units and, for each $I\in\sq_\HH$, let $\C_I\subset\HH$ be the copy of the complex plane spanned by $1$ and $I$, i.e., $\C_I=\{\alpha+I\beta\in\HH:\alpha,\beta\in\R\}$. The real algebra $\HH$ decomposes into the union of the `complex slices' $\C_I$, i.e., $\HH=\bigcup_{I\in\sq_\HH}\C_I$; moreover, $\C_I=\C_{-I}$ and $\C_I\cap\C_J=\R$ if $I\neq\pm J$.
Given a subset $E$ of $\C$, the circularization $\OO_E$ of $E$ in $\HH$ is defined by
\[
\OO_E:=\{\alpha+I\beta\in\HH:\alpha,\beta\in\R,\alpha+i\beta\in E, I\in\sq_\HH\}.
\]

A subset $S$ of $\HH$ is called circular, or axially symmetric, if $S=\OO_E$ for some $E\subset\C$. 

{\it Fix a non-empty connected circular open subset $\OO$ of $\HH$ such that $\OO\cap\R\neq\emptyset$ and denote by $D$ the subset of $\C$ such that $\OO=\OO_D$.} Note that $D$ is a non-empty connected open subset of $\C$ invariant under complex conjugation and $D\cap\R\neq\emptyset$.

Equip $\HH=\R^4$ with its natural Euclidean topology and structure of real analytic manifold.

Given any $I\in\sq_\HH$, define
\begin{equation}\label{def:OO_I}
\OO_I:=\OO\cap\C_I
\end{equation}
and the map $\phi_I:D\to\OO_I$ by
\begin{equation}\label{def:phi_I}
\text{$\phi_I(\alpha+i\beta):=\alpha+I\beta \qquad \forall\alpha+i\beta\in D$ with $\alpha,\beta\in\R$.}
\end{equation}

Each $\phi_I$ is a real analytic isomorphism. Let $g:\OO_I\to\HH$ be a function such that $g\circ\phi_I:D\to\HH$ is of class $\mscr{C}^1$ (in the usual real sense). Define $\debarI g:\OO_I\to\HH$ by
\begin{equation} \label{def:debarI}
\debarI g(x):=\frac{1}{2}\left(\left(\frac{\partial{}}{\partial\alpha}+I\frac{\partial{}}{\partial\beta}\right)(g\circ\phi_I)\right)(\phi_I^{-1}(x)).
\end{equation}
Given a function $f:\OO\to\HH$ and $I\in\sq_\HH$, we define the function  $f_I:\OO_I\to\HH$ as the restriction of $f$ to $\OO_I$, i.e.,
\begin{equation}\label{def:f_I}
f_I:=f|_{\OO_I}.
\end{equation}

Let $O$ be a non-empty open subset of $\HH$. For each $n\in\N^*:=\N\setminus\{0\}$, we denote by $\mscr{C}^n(O,\HH)$ the set of functions  of class $\mscr{C}^n$ from $O$ to $\HH$. We denote by $\mscr{C}^\omega(O,\HH)$ the set of real analytic functions from $O$ to $\HH$. We also define $\mscr{C}^0(O,\HH)$ as the set of continuous functions from $O$ to~$\HH$.

\begin{definition}[{\cite[Definition 1.1]{GeSt2006CR}}] \label{def:GS}
A function $f:\OO\to\HH$ is called \emph{slice regular} if $f\in\mscr{C}^1(\OO,\HH)$ and, for all $I\in\sq_\HH$, it holds
\[
\debarI f_I=0 \quad\text{on $\OO_I$.}
\]
The set of slice regular functions from $\OO$ to $\HH$ is denoted by $\mc{SR}(\OO,\HH)$. \bs
\end{definition}

\begin{remark} \label{rem:GS-indep-from-I}
Suppose for a moment that $\OO$ is a ball $B_\rho$ of $\HH$ centered at $0$ of positive radius $\rho\in(0,+\infty]$, where $B_{+\infty}=\HH$. As we have just sketched in the introduction, a key result of the Gentili-Struppa slice regular function theory asserts that a function $f:B_\rho\to\HH$ is slice regular if, and only if, $f$ admits a series expansion of the form $f(x)=\sum_{h\in\N}x^ha_h$ with $a_h\in\HH$, see \cite[Theorem 2.7]{GeSt2007Adv}. The `if' implication is easily proved differentiating the series term by term:
\[\textstyle
\debarI\big(\sum_{h\in\N}x^ha_h\big)_I=\sum_{h\in\N}\debarI(x_I^ha_h)=\sum_{h\in\N}(\debarI x_I^h)a_h=0,
\]
where $x_I$ is the inclusion map $B_{\rho,I}:=B_\rho\cap\C_I\hookrightarrow\HH$ and $x_I^h:=(x_I)^h$. The `only if' implication is based on a very interesting `\emph{independence from $I$}' argument. By definition, each restriction $f_I:B_{\rho,I}\to\HH$ is holomorphic w.r.t. the complex structure induced by the left multiplication by $I$. Thus, $f_I$ admits a unique series expansion $f_I(x_I)=\sum_{h\in\N}x_I^ha^I_h$ on $B_{\rho,I}$, where $a^I_h=\frac{1}{h!}(\partial_I^hf_I)(0)$. Here $\partial_I$ denotes the operator $\frac{1}{2}\big(\frac{\partial}{\partial\alpha}-I\frac{\partial}{\partial\beta}\big)$ and $\partial_I^h$ is its $h^{\mr{th}}$ power. Since $\partial_I^hf_I(0)$ equals the $h^{\mr{th}}$-derivative $\frac{\partial^hf}{\partial\alpha^h}(0)$ of $f$ at $0$ in the direction $1$, it turns out that the coefficients $a^I_h$ do not depend on $I\in\sq_\HH$, and we are done. \bs
\end{remark}

Let $F:D\to\HH\otimes_\R\C$ be a function. Write $F$ as follows: $F=F_1+\ui F_2$ with $F_1,F_2:D\to\HH$. The function $F$ is said to be a stem function if
\begin{equation}\label{eq:stem1}
\text{$F_1(\overline{z})=F_1(z)$ and $F_2(\overline{z})=-F_2(z)$ for all $z\in D$.}
\end{equation}
A function $f:\OO\to\HH$ is said to be a (left) slice function if there exists a stem function $F=F_1+\ui F_2:D\to\HH\otimes_\R\C$ such that
\begin{equation} \label{eq:slice}
f(\alpha+I\beta)=F_1(\alpha+i\beta)+IF_2(\alpha+i\beta)
\end{equation}
for all $\alpha,\beta\in\R$ with $\alpha+i\beta\in D$ and for all $I\in\sq_\HH$. In this case we say that $f$ is induced by $F$ and we write $f=\I(F)$. The even-odd properties \eqref{eq:stem1} of the stem function $F$ ensure that $f=\I(F)$ is well-defined. Moreover,  given any $I\in\sq_\HH$, it holds:
\begin{equation}\label{eq:stem}
F_1(z)=\frac{1}{2}\big(f(\phi_I(z))+f(\phi_I(\overline{z}))\big)\quad\text{and}\quad F_2(z)=-\frac{I}{2}\big(f(\phi_I(z))-f(\phi_I(\overline{z}))\big)
\end{equation}
for all $z\in D$. Consequently, each slice function is induced by a unique stem function. Suppose now that the stem function $F=F_1+iF_2$ is of class $\mscr{C}^1$, i.e., $F_1$ and $F_2$ are. In this case we can define the function $\frac{\partial F}{\partial\overline{z}}:D\to\HH\otimes_\R\C$ by
\begin{equation}\label{def:debarF}
\frac{\partial F}{\partial\overline{z}}:=\frac{1}{2}\left(\frac{\partial F}{\partial\alpha}+\ui\frac{\partial F}{\partial\beta}\right)=\frac{1}{2}\left(\left(\frac{\partial F_1}{\partial\alpha}-\frac{\partial F_2}{\partial\beta}\right)+i\left(\frac{\partial F_1}{\partial\beta}+\frac{\partial F_2}{\partial\alpha}\right)\right).
\end{equation}
It is immediate to see that $\frac{\partial F}{\partial\overline{z}}$ is again a stem function. As a consequence, if $f=\I(F)$, then we can define the slice function $\frac{\partial f}{\partial x^c}:\OO\to\HH$ by
\begin{equation}\label{def:slicedebar}
\frac{\partial f}{\partial x^c}:=\I\left(\frac{\partial F}{\partial\overline{z}}\right).
\end{equation}

We write $\frac{\partial f}{\partial x^c}$ also as $\big(\frac{\partial}{\partial x^c}\big)(f)$ or $\big(\frac{\partial}{\partial x^c}\big)f$. We denote by $\mc{S}^1(\OO,\HH)$ the set of slice functions from $\OO$ to $\HH$ induced by stem functions of class $\mscr{C}^1$.

Since $D$ is connected and intersects $\R$, and $\OO=\OO_D$ is circular, Definition \ref{def:GS} of slice regular function on $\OO$ coincides with the one given in \cite[Definition 8]{AIM2011}; that is, $f:\OO\to\HH$ is slice regular in the sense of Definition \ref{def:GS} if, and only if, $f\in\mc{S}^1(\OO,\HH)$ and $\frac{\partial f}{\partial x^c}=0$ on $\OO$. This is the statement of equivalence $(\mr{a})\Leftrightarrow(\mr{e})$ in Theorem \ref{thm:GP2013} below. We refer to \cite{AIM2011} for further details on slice and slice regular functions.

Write each quaternion $x\in\HH$ as $x=x_0+x_1i+x_2j+x_3k$ with $x_0,x_1,x_2,x_3\in\R$. Define $\mr{Re}(x):=x_0$ and $\mr{Im}(x):=x-\mr{Re}(x)=x_1i+x_2j+x_3k$. Denote by $\overline{x}=\mr{Re}(x)-\mr{Im}(x)$ the conjugation of $x$ in $\HH$ and by $|x|=\sqrt{x\overline{x}}$ the Euclidean norm of $x$ in $\HH=\R^4$. Set
\begin{equation}\label{def:OO_*}
\OO_*:=\OO\setminus\R=\OO_{D\setminus\R}.
\end{equation}

Given a function $f:\OO\to\HH$, we define the function $f_*:\OO_*\to\HH$ as the restriction of $f$ to $\OO_*$, i.e.,
\begin{equation}\label{def:f_*}
f_*:=f|_{\OO_*}.
\end{equation}

\begin{definition}[{\cite[p. 564]{global2013}}]
The differential operator $\thetabar:\mscr{C}^1(\OO_*,\HH)\to\mscr{C}^0(\OO_*,\HH)$ is defined~by
\[
\thetabar:=\frac{1}{2}\left(\frac{\partial}{\partial x_0}+\frac{\mr{Im}(x)}{|\mr{Im}(x)|^2}\left(\sum_{h=1}^3x_h\frac{\partial}{\partial x_h}\right)\right).
\]
More explicitly, if $g\in\mscr{C}^1(\OO_*,\HH)$ and $x=x_0+x_1i+x_2j+x_3k\in\OO_*$, then
\[
\thetabar(g)(x):=\frac{1}{2}\left(\frac{\partial g}{\partial x_0}(x)+\frac{x_1i+x_2j+x_3k}{x_1^2+x_2^2+x_3^2}\left(x_1\frac{\partial g}{\partial x_1}(x)+x_2\frac{\partial g}{\partial x_2}(x)+x_3\frac{\partial g}{\partial x_3}(x)\right)\right). \;\;\text{\bs}
\]
\end{definition}

We refer to the paper \cite{Pe2019} for recent results concerning the operator $2\thetabar$.

Following \cite[Definition 1.6]{CGS2013}, we define the differential operator $G:\mscr{C}^1(\OO,\HH)\to\mscr{C}^0(\OO,\HH)$ by
\begin{equation}\label{def:G}
G:=|\mr{Im}(x)|^2\frac{\partial}{\partial x_0}+\mr{Im}(x)\left(\sum_{h=1}^3x_h\frac{\partial}{\partial x_h}\right).
\end{equation}
Evidently, if $f\in\mscr{C}^1(\OO,\HH)$ and $x\in\OO_*$, then $G(f)(x)=2|\mr{Im}(x)|^2\,\thetabar(f_*)(x)$. Thus, thanks to the density of $\OO_*$ in $\OO$, we deduce:
\begin{equation}\label{eq:thetabarG}
\text{\it Given $f\in\mscr{C}^1(\OO,\HH)$, $f\in\ker(G)$ if, and only if, $f_*\in\ker(\thetabar)$.}
\end{equation}

The next theorem follows from \eqref{eq:thetabarG} and a careful reading of (the context and) the statements of \cite[Propositions 7(3) and 8]{AIM2011} and \cite[Theorems 2.2 and 2.4]{global2013}, see the proof in Section \ref{sec:proofs}.

\begin{theorem}\label{thm:GP2013}
Let $f:\OO\to\HH$ be a function. The following assertions are equivalent.
\begin{itemize}
 \item[$(\mr{a})$] $f\in\mc{SR}(\OO,\HH)$, i.e., $f$ is slice regular.
 \item[$(\mr{b})$] $f\in\mscr{C}^0(\OO,\HH)$, $f_*\in\mscr{C}^1(\OO_*,\HH)$ and $f_*\in\ker(\thetabar)$.
 \item[$(\mr{c})$] $f\in\mscr{C}^1(\OO,\HH)$ and $f_*\in\ker(\thetabar)$.
 \item[$(\mr{d})$] $f\in\mscr{C}^1(\OO,\HH)$ and $f\in\ker(G)$.
 \item[$(\mr{e})$] $f\in\mc{S}^1(\OO,\HH)$ and $\frac{\partial f}{\partial x^c}=0$ on $\OO$.
\end{itemize} 
\end{theorem}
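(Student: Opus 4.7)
The plan is to close the cycle
\[
(\mr{a})\Longrightarrow(\mr{c})\Longrightarrow(\mr{b})\Longrightarrow(\mr{a})
\]
and to establish separately the two equivalences $(\mr{a})\Leftrightarrow(\mr{e})$ and $(\mr{c})\Leftrightarrow(\mr{d})$; the second of these is already recorded in \eqref{eq:thetabarG}. The equivalence $(\mr{a})\Leftrightarrow(\mr{e})$ is a translation between the Gentili--Struppa and the slice-function formalisms. For $(\mr{a})\Rightarrow(\mr{e})$, the essential input, under the standing hypothesis that $D$ is connected and meets $\R$, is the representation formula for slice regular functions on axially symmetric domains from \cite{AIM2011}: it forces such an $f$ to be a slice function induced by the unique stem function $F=F_1+\ui F_2$ determined by \eqref{eq:stem} (for any choice of $I\in\sq_\HH$), and smoothness of $f$ transfers to smoothness of $F$ through those formulas, so $f\in\mc{S}^1(\OO,\HH)$. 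The condition $\frac{\partial f}{\partial x^c}=\I\big(\frac{\partial F}{\partial\overline{z}}\big)=0$ then becomes, slice by slice via \eqref{eq:slice}, the system $\debarI f_I=0$. The converse $(\mr{e})\Rightarrow(\mr{a})$ is immediate: if $F\in\mscr{C}^1$, then $f=\I(F)$ is $\mscr{C}^1$ through the real-analytic isomorphisms $\phi_I$, and the same slice-by-slice identity translates $\frac{\partial f}{\partial x^c}=0$ into the Cauchy--Riemann equations.

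For $(\mr{a})\Rightarrow(\mr{c})$ I will check directly that $\thetabar$ restricts to $\debarI$ along each off-real slice. For $x=\alpha+I\beta\in\OO_I\setminus\R$ with $\beta\neq0$, one has $\mr{Im}(x)=\beta I$, $|\mr{Im}(x)|^2=\beta^2$, and the Euler-type vector field $\sum_{h=1}^{3}x_h\tfrac{\partial}{\partial x_h}$ restricts to $\beta\tfrac{\partial}{\partial\beta}$ along $\C_I$, while $\tfrac{\partial}{\partial x_0}$ restricts to $\tfrac{\partial}{\partial\alpha}$. Substituting in the definition of $\thetabar$ yields
\[
\thetabar(f_*)(x)=\tfrac{1}{2}\Big(\tfrac{\partial f}{\partial\alpha}(x)+I\,\tfrac{\partial f}{\partial\beta}(x)\Big)=\debarI f_I(x)\qquad(x\in\OO_I\setminus\R),
\]
so $(\mr{a})$ immediately yields $\thetabar f_*=0$ on $\OO_*$, hence $(\mr{c})$. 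The implication $(\mr{c})\Rightarrow(\mr{b})$ is trivial.

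The substantive step is $(\mr{b})\Rightarrow(\mr{a})$, for which I expect the main obstacle to be the regularity upgrade across $\OO\cap\R$. Two tasks are involved: (i) promote $f$ from merely continuous on $\OO$ with $\mscr{C}^1$ regularity only on $\OO_*$ to $\mscr{C}^1$ on all of $\OO$; and (ii) deduce the slicewise equations $\debarI f_I=0$ on each $\OO_I$ from $\thetabar f_*=0$. Task (ii) is the reverse of the restriction identity established above, combined with continuity of $f$: it gives $\debarI f_I=0$ on $\OO_I\setminus\R$ for every $I$, and once (i) provides continuity of the partial derivatives at real points the equation extends to all of $\OO_I$. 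Task (i) is precisely the content of \cite[Theorems 2.2 and 2.4]{global2013}: the interplay between the axial symmetry of $\OO$, the hypothesis $\OO\cap\R\neq\emptyset$, and the structure of $\thetabar$ forces any continuous solution of $\thetabar f_*=0$ that is $\mscr{C}^1$ on $\OO_*$ to extend smoothly (in fact real-analytically) across the real axis. With these tools in hand, $f$ satisfies Definition \ref{def:GS} and $(\mr{a})$ follows, closing the cycle.
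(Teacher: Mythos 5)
Your overall architecture matches the paper's quite closely: the restriction identity $\thetabar f_*=\debarI f_I$ on $\OO_I\setminus\R$ gives $(\mr{a})\Rightarrow(\mr{c})$, $(\mr{c})\Rightarrow(\mr{b})$ is trivial, $(\mr{c})\Leftrightarrow(\mr{d})$ is \eqref{eq:thetabarG}, and the hard regularity upgrade across $\OO\cap\R$ is delegated to \cite[Theorem 2.4]{global2013} (the paper phrases this as $(\mr{b})\Leftrightarrow(\mr{e})$ and then proves $(\mr{e})\Rightarrow(\mr{a})$, which is the same content as your $(\mr{b})\Rightarrow(\mr{a})$).

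There is, however, a genuine gap in your treatment of $(\mr{e})\Rightarrow(\mr{a})$. You claim it is ``immediate'' because ``if $F\in\mscr{C}^1$, then $f=\I(F)$ is $\mscr{C}^1$ through the real-analytic isomorphisms $\phi_I$.'' This is false reasoning: the maps $\phi_I$ only control each slice separately, so $F\in\mscr{C}^1$ yields $f\in\mscr{C}^1_\sbs(\OO,\HH)$ and $f_*\in\mscr{C}^1(\OO_*,\HH)$ (the content of Lemma \ref{lem:reg-n}), but \emph{not} $f\in\mscr{C}^1(\OO,\HH)$. Off the real axis one writes $f(x)=F_1(\zeta(x))+\mscr{I}(x)F_2(\zeta(x))$ with $\zeta(x)=\mr{Re}(x)+i|\mr{Im}(x)|$ and $\mscr{I}(x)=|\mr{Im}(x)|^{-1}\mr{Im}(x)$, and neither $\zeta$ nor $\mscr{I}$ is differentiable at real points; Remark \ref{rem:c0-sbs0} exhibits a function that is slice-by-slice $\mscr{C}^n$ for every $n$ yet not even continuous on $\OO$, so the passage from slicewise to global regularity is exactly the nontrivial point of the whole paper and cannot be waved through. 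The paper's proof closes this gap via Lemma \ref{lem:real-analytic}: the hypothesis $\frac{\partial f}{\partial x^c}=\I\big(\frac{\partial F}{\partial\overline{z}}\big)=0$, read back through \eqref{eq:stem}, forces $\frac{\partial F}{\partial\overline{z}}=0$, so $F$ is holomorphic and hence real analytic, and then \cite[Proposition 7(3)]{AIM2011} (which exploits the even--odd structure \eqref{eq:stem1} of a real analytic stem function) yields $f\in\mscr{C}^\omega(\OO,\HH)\subset\mscr{C}^1(\OO,\HH)$; after that, Lemma \ref{lem:de-n} converts $\frac{\partial f}{\partial x^c}=0$ into $\debarI f_I=0$ on all of $\OO_I$. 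You need this (or an equivalent) argument; without it the cycle does not close. Your $(\mr{a})\Rightarrow(\mr{e})$ sketch via the representation formula is acceptable in spirit (it is close to what the paper does in Lemma \ref{lem:sbs} under weaker hypotheses), and your streamlined computation of $\thetabar$ along a slice, valid for both signs of $\beta$ at once, is a mild simplification of the paper's Lemma \ref{lem:1}.
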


\begin{corollary}\label{cor:GP2013}
$\mc{SR}(\OO,\HH)=\{f\in\mscr{C}^1(\OO,\HH):f_*\in\ker(\thetabar)\}=\ker(G)\subset\mscr{C}^\omega(\OO,\HH)$. 
\end{corollary}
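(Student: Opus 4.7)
The plan is to read the two equalities directly off Theorem~\ref{thm:GP2013} and then to establish the inclusion $\mc{SR}(\OO,\HH)\subset\mscr{C}^\omega(\OO,\HH)$ by a two-part argument: one part handles $\OO_*$ via the holomorphicity of the stem function, and the other handles real points via a power-series expansion.

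First I would observe that the identity $\mc{SR}(\OO,\HH)=\{f\in\mscr{C}^1(\OO,\HH):f_*\in\ker(\thetabar)\}$ is exactly the equivalence $(\mr{a})\Leftrightarrow(\mr{c})$ of Theorem~\ref{thm:GP2013}, while $\{f\in\mscr{C}^1(\OO,\HH):f_*\in\ker(\thetabar)\}=\ker(G)$ is $(\mr{c})\Leftrightarrow(\mr{d})$, itself a consequence of \eqref{eq:thetabarG} combined with the density of $\OO_*$ in $\OO$. So both set-theoretic equalities are essentially a repackaging of Theorem~\ref{thm:GP2013}, and only the analyticity inclusion requires new input.

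For the analyticity, I would use the characterization $(\mr{a})\Leftrightarrow(\mr{e})$: every $f\in\mc{SR}(\OO,\HH)$ has the form $f=\I(F)$ for a stem function $F=F_1+\ui F_2$ of class $\mscr{C}^1$ with $\frac{\partial F}{\partial\overline{z}}=0$ on $D$. Expanding the components in \eqref{def:debarF} gives the componentwise Cauchy--Riemann equations $\frac{\partial F_1}{\partial\alpha}=\frac{\partial F_2}{\partial\beta}$ and $\frac{\partial F_1}{\partial\beta}=-\frac{\partial F_2}{\partial\alpha}$, from which $F_1,F_2\in\mscr{C}^\omega(D,\HH)$. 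On $\OO_*$ I would then compose the real-analytic spherical-coordinates map $x\mapsto\big(\mr{Re}(x),|\mr{Im}(x)|,\mr{Im}(x)/|\mr{Im}(x)|\big)$, taking values in $\R\times(0,+\infty)\times\sq_\HH$, with the slice-function formula $f(\alpha+I\beta)=F_1(\alpha+i\beta)+IF_2(\alpha+i\beta)$ to conclude $f\in\mscr{C}^\omega(\OO_*,\HH)$.

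The hard part is extending real analyticity across the real axis, since there the coordinate $I=\mr{Im}(x)/|\mr{Im}(x)|$ degenerates. To handle a real point $a\in\OO\cap\R$ I would shrink to a small four-dimensional ball $B_\rho(a)\subset\OO$, which is automatically circular, and invoke the independence-from-$I$ argument recalled in Remark~\ref{rem:GS-indep-from-I}: each $f_I$ expands on $B_\rho(a)\cap\C_I$ as a convergent complex power series whose coefficients $a_h^I=\frac{1}{h!}\frac{\partial^h f}{\partial\alpha^h}(a)$ do not depend on $I$. Setting $a_h:=a_h^I$ and applying Cauchy's inequality on a single slice yields $|a_h|\le M\,r^{-h}$ uniformly in $h$ for every $r<\rho$; since $|(x-a)^h a_h|\le|x-a|^h|a_h|$ by the multiplicativity of the quaternionic norm, the quaternionic series $\sum_{h\in\N}(x-a)^h a_h$ converges absolutely and uniformly on compact subsets of $B_\rho(a)$. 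This realizes $f$ as a genuine power series at $a$, giving real analyticity at every real point and hence on all of $\OO$.
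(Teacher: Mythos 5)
Your proposal is correct. The two set equalities are handled exactly as in the paper, by reading off $(\mr{a})\Leftrightarrow(\mr{c})\Leftrightarrow(\mr{d})$ of Theorem~\ref{thm:GP2013}. For the inclusion $\mc{SR}(\OO,\HH)\subset\mscr{C}^\omega(\OO,\HH)$ you and the paper start from the same place --- via $(\mr{a})\Leftrightarrow(\mr{e})$ one gets $f=\I(F)$ with $\frac{\partial F}{\partial\overline{z}}=0$, so $F$ is holomorphic and in particular real analytic --- but then diverge: the paper (Lemma~\ref{lem:real-analytic}) simply cites \cite[Proposition 7(3)]{AIM2011}, which says that any slice function induced by a $\mscr{C}^\omega$ stem function is $\mscr{C}^\omega$, whereas you reprove this in the case at hand by splitting $\OO$ into $\OO_*$ (where the spherical-coordinate factorization $f_*=(F_1\circ\zeta)+\mscr{I}\cdot(F_2\circ\zeta)$ is a composition of real analytic maps) and the real points (where you use the independence-from-$I$ power series of Remark~\ref{rem:GS-indep-from-I} plus Cauchy estimates). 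Your route is self-contained but longer, and its real-point step exploits slice regularity in an essential way, so it would not yield the more general statement of \cite[Proposition 7(3)]{AIM2011} that the paper relies on elsewhere; the cited result instead writes $F_1(\alpha,\beta)=G_1(\alpha,\beta^2)$ and $F_2(\alpha,\beta)=\beta\,G_2(\alpha,\beta^2)$ using the even/odd symmetry, which handles the degeneracy at $\R$ for arbitrary real analytic stem functions. One small point to tighten in your final step: uniform convergence of the quaternionic series $\sum_{h}(x-a)^ha_h$ on compacta gives continuity, not yet real analyticity; you should add that $(x-a)^h$ expands into at most $4^h$ monomials of degree $h$ with unit-quaternion coefficients, so the bound $|a_h|\le Mr^{-h}$ makes the full monomial expansion in the four real coordinates converge normally on a ball of radius $r/4$, which is what actually certifies $f\in\mscr{C}^\omega$ near $a$.
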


In the monogenic case, some versions of the equivalences stated in Theorem \ref{thm:GP2013} are known, see \cite[Lemma 2.2]{CoSa2009}, \cite[Lemma 2.13]{CoSa2011} and \cite[Section 3]{CGS2013}.

\begin{remark}\label{rem:distribution}
Following \cite[Definition 1.6]{CGS2013}, we define $\mc{GS}(\OO)$ as the set of all distributional solutions $f$ of the differential equation $G(f)=0$ on $\OO$. By \cite[proof of Theorem 3.5]{CGS2013} and \cite{CS2014}, there exist distributional solutions in $\mc{GS}(\OO)$, which are not functions. Thus, the set $\mc{SR}(\OO,\HH)=\ker(G)$ is strictly contained in $\mc{GS}(\OO)$. Equivalence $(\mr{a})\Leftrightarrow(\mr{b})$ of Theorem \ref{thm:GP2013} and \eqref{eq:thetabarG} ensure that $\mc{SR}(\OO,\HH)$ is the set of functions $f$ in $\mscr{C}^0(\OO,\HH)$ such that $f_*$ belongs to $\mscr{C}^1(\OO_*,\HH)$ and $G(f)=0$ on $\OO$ in the sense of distributions, i.e., $f\in\mc{GS}(\OO)$. \bs
\end{remark}

Let us introduce the concept of slice-by-slice regularity for functions $f:\OO\to\HH$.

\begin{definition}
We say that a function $f:\OO\to\HH$ is \emph{slice-by-slice continuous} if, for all $I\in\sq_\HH$, the restriction $f_I:\OO_I\to\HH$ is continuous or, equivalently, if $f_I\circ\phi_I:D\to\HH$ is continuous. We denote by $\mscr{C}^0_\sbs(\OO,\HH)$ the set of slice-by-slice continuous functions from $\OO$ to $\HH$.

Let $n\in\N^*$. We say that $f:\OO\to\HH$ is a \emph{slice-by-slice $\mscr{C}^n$} function if, for all $I\in\sq_\HH$, the composition $f_I\circ\phi_I:D\to\HH$ is of class $\mscr{C}^n$. We denote by $\mscr{C}^n_\sbs(\OO,\HH)$ the set of slice-by-slice $\mscr{C}^n$ functions from $\OO$ to $\HH$. \bs 
\end{definition}

If $f:\OO\to\HH$ is a slice-by-slice $\mscr{C}^1$ function and $\debarI f_I=0$ on $\OO_I$ for all $I\in\sq_\HH$, it is natural to ask whether $f$ is slice regular or, equivalently, $f$ is of class $\mscr{C}^1$ on $\OO$ (and hence it satisfies Definition \ref{def:GS}). As we will see in Theorem \ref{thm:GP2013-sbs} below, the answer is affirmative. In Theorem \ref{thm:GP2013-sbs} we also improve implication $(\mr{b})\Rightarrow(\mr{a})$ of Theorem \ref{thm:GP2013} by replacing $\mscr{C}^0(\OO,\HH)$ with $\mscr{C}^0_\sbs(\OO,\HH)$. To corroborate this improvement, we give an example of a function $f\in\mscr{C}^0_\sbs(\OO,\HH)$ such that $f_*\in\mscr{C}^1(\OO_*,\HH)$ and $f\not\in\mscr{C}^0(\OO,\HH)$.

\begin{remark}\label{rem:c0-sbs0}
Up to a translation of $\OO$ in $\HH$ along a suitable real number, we can assume that $0\in\OO$. Define the function $f:\OO\to\HH$ by setting
\begin{equation}\label{eq:sbscont-noncont}
f(x):=
\left\{
 \begin{array}{ll}
x_1^2x_2(x_1^4+x_2^2+x_3^2)^{-1}  & \text{ if $\,x\in\OO\setminus\R=\OO_*$,}
\vspace{.3em}
\\
0  & \text{ if $\,x\in\OO\cap\R$,}
\end{array}
\right.
\end{equation}
where $x=x_0+x_1i+x_2j+x_3k\in\OO$ with $x_0,x_1,x_2,x_3\in\R$. Evidently, $f_*$ belongs to $\mscr{C}^1(\OO_*,\HH)$ (actually, $f\in\mscr{C}^n(\OO_*,\HH)$ for all $n\in\N^*$). Let $I=i_1i+i_2j+i_3k\in\sq_\HH$, where $i_1,i_2,i_3\in\R$. Given $\alpha+i\beta\in D$ with $\alpha,\beta\in\R$, it holds:
\[
f_I(\alpha+I\beta)=
\left\{
\begin{array}{ll}
\beta i_1^2i_2(\beta^2i_1^4+i_2^2+i_3^2)^{-1}  & \text{ if $\,\beta\neq0$,}
\vspace{.3em}
\\
0  & \text{ if $\,\beta=0$.}
\end{array}
\right.
\]
Note that $f_I$ is constantly equal to $0$ if $i_2=0$. If $i_2\neq0$, then $\beta^2i_1^4+i_2^2+i_3^2>0$ for all $\beta\in\R$ and $f_I$ is continuous. It follows that $f\in\mscr{C}^0_\sbs(\OO,\HH)$ (actually, $f\in\mscr{C}^n_\sbs(\OO,\HH)$ for all $n\in\N^*$). Define the sequence $\{q_h\}_{h\in\N^*}$ of quaternions by $q_h:=h^{-1}i+h^{-2}j$. Note that it converges to~$0$. On the other hand, for $h$ sufficiently large, say $h\geq H$ for some $H\in\N^*$, it holds: $q_h\in\OO$ and $f(q_h)=2^{-1}$. It follows that the sequence $\{f(q_h)\}_{h\geq H}$ converges to $2^{-1}\neq0=f(0)$. Hence $f\not\in\mscr{C}^0(\OO,\HH)$. \bs
\end{remark}

We are ready to state our next result.

\begin{theorem}\label{thm:GP2013-sbs}
Let $f:\OO\to\HH$ be a function. The following assertions are equivalent.
\begin{itemize}
 \item[$(\mr{a})$] $f\in\mc{SR}(\OO,\HH)$.
 \item[$(\mr{a}')$] $f\in\mscr{C}^1_\sbs(\OO,\HH)$ and $\debarI f_I=0$ on $\OO_I$ for all $I\in\sq_\HH$.
 \item[$(\mr{b}')$] $f\in\mscr{C}^0_\sbs(\OO,\HH)$, $f_*\in\mscr{C}^1(\OO_*,\HH)$ and $f_*\in\ker(\thetabar)$.
 \end{itemize}
\end{theorem}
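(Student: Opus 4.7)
The plan is to establish the closed chain (a) $\Rightarrow$ (b') $\Rightarrow$ (a') $\Rightarrow$ (a), which together with the trivial inclusion $\mscr{C}^1(\OO,\HH) \subset \mscr{C}^1_\sbs(\OO,\HH)$ (giving (a) $\Rightarrow$ (a')) yields all the equivalences. The implication (a) $\Rightarrow$ (b') follows immediately from (a) $\Rightarrow$ (b) in Theorem \ref{thm:GP2013} together with the inclusion $\mscr{C}^0(\OO,\HH) \subset \mscr{C}^0_\sbs(\OO,\HH)$.

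For (b') $\Rightarrow$ (a'), fix $I \in \sq_\HH$. Using the basic identity $\thetabar g(x) = \debarI(g|_{\OO_I \cap \OO_*})(x)$ for $g \in \mscr{C}^1(\OO_*, \HH)$ and $x \in \OO_I \cap \OO_*$ recalled in the introduction, the hypothesis $f_* \in \ker(\thetabar)$ gives $\debarI f_I = 0$ on $\OO_I \setminus \R$. Thus $f_I$ is continuous on the planar domain $\OO_I$ and holomorphic (with respect to $\debarI$) on $\OO_I \setminus \R$. A standard Morera/removable-singularity argument---applied $\C_I$-componentwise via a decomposition $\HH = \C_I \oplus \C_I K$ with $K \in \sq_\HH$ anticommuting with $I$---forces $f_I$ to be holomorphic on the whole of $\OO_I$, and in particular real analytic. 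Consequently $f \in \mscr{C}^1_\sbs(\OO, \HH)$ with $\debarI f_I = 0$ on $\OO_I$ for every $I$, which is (a').

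The core of the proof is (a') $\Rightarrow$ (a). I pick any $J \in \sq_\HH$, set $g := f_J \circ \phi_J : D \to \HH$ (which is $\mscr{C}^1$ by hypothesis), and, modelled on \eqref{eq:stem} with $I := J$, define $F = F_1 + \ui F_2 : D \to \HH \otimes_\R \C$ by
\[
F_1(z) := \tfrac{1}{2}\bigl(g(z)+g(\bar z)\bigr), \qquad F_2(z) := -\tfrac{J}{2}\bigl(g(z)-g(\bar z)\bigr).
\]
The even/odd relations \eqref{eq:stem1} are visible by inspection, so $F$ is a $\mscr{C}^1$ stem function and $\tilde f := \I(F) \in \mc{S}^1(\OO, \HH)$ is well defined, with $\tilde f|_{\OO_J} = f|_{\OO_J}$ by construction. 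The key calculation is the verification that $\partial F/\partial \bar z = 0$ via \eqref{def:debarF}: expanding $\partial_\alpha F_j, \partial_\beta F_j$ in terms of $g_\alpha, g_\beta$ evaluated at $z$ and $\bar z$, and using the hypothesis $g_\alpha + J g_\beta = 0$ at both points (equivalently $g_\beta = J g_\alpha$), produces a complete cancellation of all four terms. By the implication (e) $\Rightarrow$ (a) of Theorem \ref{thm:GP2013}, $\tilde f$ is slice regular on $\OO$.

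It remains to identify $\tilde f$ with $f$ on the whole of $\OO$. By construction $\tilde f = f$ on $\OO_J$, and in particular $\tilde f(\alpha) = F_1(\alpha) = f(\alpha)$ for every $\alpha \in D \cap \R$. Fix $I \in \sq_\HH$: both $\tilde f_I$ (since $\tilde f$ is slice regular) and $f_I$ (by (a')) are holomorphic on the connected planar domain $\OO_I = \phi_I(D)$, and they agree on $D \cap \R$, a non-empty open subinterval of $\R$ that has accumulation points inside $\OO_I$. The identity principle then gives $\tilde f_I = f_I$ for every $I \in \sq_\HH$, hence $\tilde f = f$ and $f \in \mc{SR}(\OO, \HH)$. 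The main technical obstacle is the cancellation $\partial F/\partial \bar z = 0$: the four derivative terms mix the values of $g_\alpha, g_\beta$ at $z$ and $\bar z$ through the quaternionic $J$-structure, and one needs to carefully rearrange them using both instances of the holomorphicity constraint.
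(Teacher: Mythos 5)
Your proof is correct, but the key implication is handled by a genuinely different argument than the paper's. For $(\mr{a})\Rightarrow(\mr{b}')$ and the Morera/removable-line step in $(\mr{b}')\Rightarrow(\mr{a}')$ you coincide with the paper (its Lemma \ref{lem:sbs} performs exactly the same componentwise decomposition $\HH=\C_I\oplus\C_I J$ and Morera argument). Where you diverge is in passing from the slice-by-slice data to global slice regularity. The paper's Lemma \ref{lem:sbs} runs the Gentili--Struppa ``independence from $I$'' machine: it expands each $f_I$ in a power series at a real point of a ball $B_\rho\subset\OO$, observes that the coefficients $a_h=\frac{1}{h!}\frac{\partial^h f}{\partial\alpha^h}(0)$ do not depend on $I$, deduces the representation formula on $B_\rho$ via the auxiliary function $g_{K,H}$, and then propagates it to all of $D$ by real-analytic continuation before assembling the stem function. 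You instead build the candidate stem function $F$ directly from a single slice $\OO_J$ via \eqref{eq:stem}, verify by the explicit cancellation (using $g_\alpha+Jg_\beta=0$ at both $z$ and $\overline z$) that $\partial F/\partial\overline z=0$, invoke $(\mr{e})\Rightarrow(\mr{a})$ of Theorem \ref{thm:GP2013} to get a slice regular $\widetilde f$, and then identify $\widetilde f_I$ with $f_I$ on each connected planar domain $\OO_I$ by the ordinary identity principle, since both are holomorphic there and agree on the nonempty real trace $\OO\cap\R$ (you write $D\cap\R$, a harmless notational slip). Your route avoids the Taylor expansion, the ball $B_\rho$, and the analytic continuation of $g_{K,H}$ altogether, at the cost of the direct four-term cancellation computation for $\partial F/\partial\overline z$ and of checking connectedness of $\OO_I$ (guaranteed here since $D$ is connected); the paper's route makes the ``independence from $I$'' mechanism explicit, which is the historical heart of the theory. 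Both arguments are complete and rest on the same external input, namely implication $(\mr{e})\Rightarrow(\mr{a})$ of Theorem \ref{thm:GP2013}.
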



\subsection{Smoothness of global slice polyanalytic functions of higher order}\label{subsec:poly}

Let $n\in\N^*$, let $I\in\sq_\HH$ and let $g:\OO_I\to\HH$ be a function such that $g\circ\phi_I:D\to\HH$ is of class~$\mscr{C}^n$. Define $\debarIn g:\OO_I\to\HH$ by
\begin{equation} \label{def:debarIn}
\debarIn g(x):=\frac{1}{2^n}\left(\left(\frac{\partial{}}{\partial\alpha}+I\frac{\partial{}}{\partial\beta}\right)^n(g\circ\phi_I)\right)(\phi_I^{-1}(x)).
\end{equation}
Evidently, one can define $\debarIn g$ also by induction: $\debarI^{\,0}g:=g$ and $\debarIn g:=\debarI(\debarI^{\,n-1}g)$ if $n\geq1$.

In \cite{ADS1} the authors introduce the concept of quaternionic slice polyanalytic function of order~$n$. Its definition is the natural generalization of Definition~\ref{def:GS} in which one replaces $\debarI$ with $\debarIn$.

\begin{definition}[{\cite[Definition 3.1]{ADS1}}]\label{def:sl-poly-sbs}
Let $n\in\N^*$. A function $f:\OO\to\HH$ is said to be \emph{slice polyanalytic of order $n$} if $f\in\mscr{C}^n(\OO,\HH)$ and, for each $I\in\sq_\HH$, it holds:
\[
\debarIn f_I=0 \quad\text{on $\OO_I$.}
\]
The set of slice polyanalytic functions of order $n$ from $\OO$ to $\HH$ is denoted by $\mc{SP}_n(\OO,\HH)$, i.e.,
\[
\text{$\mc{SP}_n(\OO,\HH):=\{f\in\mscr{C}^n(\OO,\HH):\debarIn f_I(x)=0\;\; \forall I\in\sq_\HH,\forall x\in\OO_I\}$. \bs}
\]
\end{definition}

In the theory of polyanalytic functions of a complex variable, the following is a basic result.

\begin{theorem}[{\cite[Section 1.1, (1.1) and (1.3)]{balk}}]\label{thm:classical}
Let $n\in\N^*$, let $E$ be a non-empty open subset of $\C$ and let $g:E\to\C$ be a function of class $\mscr{C}^n$. Then $\frac{\partial^ng}{\partial\overline{z}^n}=0$ on $E$ if, and only if, there exist holomorphic functions $g_0,\ldots,g_{n-1}:E\to\C$ such that $g(z)=\sum_{h=0}^{n-1}\overline{z}^hg_h(z)$ for all $z\in E$.

If $g$ satisfies the preceding equivalent conditions, then it is said to be \emph{polyanalytic of order $n$}.
\end{theorem}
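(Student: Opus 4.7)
The plan is to prove both implications by induction on $n$, with the non-trivial direction handled by peeling off the top order term.

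For the easy direction (existence of the decomposition implies $\frac{\partial^n g}{\partial \overline{z}^n}=0$), I would just apply the Leibniz rule term by term. For each summand $\overline{z}^h g_h(z)$ with $0\le h\le n-1$ and $g_h$ holomorphic, the Leibniz expansion of $\frac{\partial^n}{\partial \overline{z}^n}(\overline{z}^h g_h)$ contains only factors of the form $\frac{\partial^k \overline{z}^h}{\partial \overline{z}^k}$ with $k\le n$ and $\frac{\partial^{n-k} g_h}{\partial \overline{z}^{n-k}}$; holomorphicity kills the latter unless $k=n$, while $k=n>h$ kills the former. So every term vanishes.

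The substantive direction is the converse, proved by induction on $n$. The base case $n=1$ is precisely the definition of holomorphic. For the inductive step, assume the result for $n-1$ and suppose $\frac{\partial^n g}{\partial \overline{z}^n}=0$ on $E$. Set
\[
G(z) := \frac{1}{(n-1)!}\frac{\partial^{n-1} g}{\partial \overline{z}^{n-1}}(z).
\]
Then $\frac{\partial G}{\partial \overline{z}}=0$, so $G$ is holomorphic on $E$. Define
\[
\widetilde{g}(z) := g(z) - \overline{z}^{\,n-1}\,G(z).
\]
A Leibniz expansion identical in spirit to the one above shows that $\frac{\partial^{n-1}}{\partial \overline{z}^{n-1}}(\overline{z}^{\,n-1} G) = (n-1)!\,G$ (only the term with all $n-1$ derivatives landing on $\overline{z}^{\,n-1}$ survives, since $G$ is holomorphic), hence $\frac{\partial^{n-1} \widetilde{g}}{\partial \overline{z}^{n-1}} = (n-1)!\,G - (n-1)!\,G = 0$. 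The inductive hypothesis, applied to $\widetilde{g}\in\mathscr{C}^{n-1}(E,\C)$, yields holomorphic $g_0,\ldots,g_{n-2}:E\to\C$ with $\widetilde{g}(z)=\sum_{h=0}^{n-2}\overline{z}^{\,h} g_h(z)$. Setting $g_{n-1}:=G$ then gives the desired decomposition $g(z)=\sum_{h=0}^{n-1}\overline{z}^{\,h} g_h(z)$.

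I expect no genuine obstacle here: the argument is entirely formal once one recognises that $\frac{\partial^{n-1} g}{\partial \overline{z}^{n-1}}$ is automatically holomorphic (it is the antepenultimate integrated version of the hypothesis) and that subtracting the appropriate multiple of $\overline{z}^{\,n-1}$ times that holomorphic function reduces the order by one. The only point that needs care is the bookkeeping in the Leibniz expansion, which is straightforward because the holomorphicity of $G$ collapses all but a single term. Regularity is preserved throughout, since $\widetilde g\in\mathscr{C}^{n-1}(E,\C)$ whenever $g\in\mathscr{C}^n(E,\C)$.
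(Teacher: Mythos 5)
Your proof is correct. Note first that the paper does not actually prove this statement: it is quoted verbatim from Balk's monograph, so there is no in-paper argument to match against. The closest the paper comes is the quaternionic analogue, implication $(\mr{c}'')\Rightarrow(\mr{a})$ of Theorem \ref{main}, which the author says follows ``the strategy of \cite[Section 1.1, top of page 11]{balk}''. That strategy is dual to yours: there one observes that $\frac{\partial g}{\partial\overline{z}}$ is annihilated by the $(n-1)$-st power of $\frac{\partial}{\partial\overline{z}}$, applies the induction hypothesis to this \emph{first} derivative to get $\frac{\partial g}{\partial\overline{z}}=\sum_{h=0}^{n-2}\overline{z}^hg_h$, then antidifferentiates term by term (via $\frac{\partial}{\partial\overline{z}}\big(\frac{1}{h+1}\overline{z}^{h+1}g_h\big)=\overline{z}^hg_h$) and concludes that the difference is holomorphic. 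You instead peel off the \emph{top} coefficient: you note that $G=\frac{1}{(n-1)!}\frac{\partial^{n-1}g}{\partial\overline{z}^{n-1}}$ is holomorphic and that $g-\overline{z}^{\,n-1}G$ has vanishing $(n-1)$-st $\overline{z}$-derivative. Both inductions are sound and of comparable length; your version has the small advantage that it exhibits the leading coefficient $g_{n-1}$ explicitly as $\frac{1}{(n-1)!}\frac{\partial^{n-1}g}{\partial\overline{z}^{n-1}}$ (which is exactly the formula the paper exploits in Remark \ref{1.14} when computing $f_{1,I}=\debarI v_I$), whereas the Balk-style induction is the one that transfers directly to the slice setting because antidifferentiation of $\overline{x}^hf_h$ is available there via Lemma \ref{lem:leibniz}$(\mr{iv})$. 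Your regularity bookkeeping ($G\in\mscr{C}^1$ with $\frac{\partial G}{\partial\overline{z}}=0$ hence holomorphic, and $\widetilde{g}\in\mscr{C}^{n-1}$ since $\overline{z}^{\,n-1}G$ is smooth) is also in order.
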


The latter result suggests introducing the following variant of Definition \ref{def:sl-poly-sbs}.

\begin{definition}\label{def:sl-poly-global}
Let $n\in\N^*$ and let $f:\OO\to\HH$ be a function. We say that $f$ is a \emph{global slice polyanalytic function of order $n$} if there exist $f_0,\ldots,f_{n-1}\in\mc{SR}(\OO,\HH)$ such that
\begin{equation} \label{eq:decomposition}
f(x)=\sum_{h=0}^{n-1}\overline{x}^hf_h(x)\quad\text{for all $x\in\OO$.}
\end{equation}
We denote by $\mc{SP}^n(\OO,\HH)$ the set of global slice polyanalytic functions of order $n$ from $\OO$ to $\HH$. \bs
\end{definition}

By the very definitions, the concepts of slice regular function, slice polyanalytic function of order~$1$ and global slice polyanalytic function of order~$1$ coincide, i.e.,
\[
\mc{SR}(\OO,\HH)=\mc{SP}_1(\OO,\HH)=\mc{SP}^1(\OO,\HH).
\]

In \cite{ADS1} the authors prove a decomposition theorem for slice polyanalytic functions of any order $n\in\N^*$, which extends Theorem \ref{thm:classical} to the slice setting. Making use of this theorem, they deduce the \emph{identity principle}, the \emph{representation formula} and the \emph{sliceness condition} for slice polyanalytic functions of any order, see \cite[Theorem 3.8, Theorem 3.9, Corollary 3.11]{ADS1}, respectively. Bearing in mind our Definition \ref{def:sl-poly-global}, the mentioned decomposition theorem can be equivalently restated as follows.

\begin{theorem}[{\cite[Corollary 3.7]{ADS1}}]\label{thm:poly-decomposition}
For all $n\in\N^*$, it holds: $\mc{SP}_n(\OO,\HH)=\mc{SP}^n(\OO,\HH)$.
\end{theorem}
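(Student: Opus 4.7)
The plan is to handle the two inclusions separately. For $\mc{SP}^n(\OO,\HH) \subseteq \mc{SP}_n(\OO,\HH)$, suppose $f(x) = \sum_{h=0}^{n-1} \overline{x}^h f_h(x)$ with each $f_h \in \mc{SR}(\OO,\HH)$. Smoothness $f \in \mscr{C}^n(\OO,\HH)$ is immediate from Corollary~\ref{cor:GP2013}, which gives $f_h \in \mscr{C}^\omega(\OO,\HH)$, together with the polynomial nature of $\overline{x}^h$. A short computation from~\eqref{def:debarI} shows $\debarI \overline{x}_I = 1$ on each slice $\C_I$; iterated Leibniz together with $\debarI (f_h)_I = 0$ then yields $\debarI^{\,j}(\overline{x}_I^h(f_h)_I) = h(h-1)\cdots(h-j+1)\,\overline{x}_I^{h-j}(f_h)_I$, which vanishes whenever $j = n > h$, so that $\debarIn f_I = 0$ on $\OO_I$ for every $I \in \sq_\HH$.

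For the reverse inclusion $\mc{SP}_n(\OO,\HH) \subseteq \mc{SP}^n(\OO,\HH)$, my strategy is to lift the classical polyanalytic decomposition (Theorem~\ref{thm:classical}) from each slice to the stem function level. Assuming $f \in \mc{SP}_n(\OO,\HH)$ is a slice function $f = \I(F)$ with stem $F \colon D \to \HH \otimes_\R \C$ of class $\mscr{C}^n$, the slice-by-slice equations $\debarIn f_I = 0$ translate via the identity $\tfrac{\partial^n f}{\partial(x^c)^n} = \I\bigl(\tfrac{\partial^n F}{\partial \overline{z}^n}\bigr)$ into the single equation $\tfrac{\partial^n F}{\partial \overline{z}^n} = 0$ on $D$. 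Applying Theorem~\ref{thm:classical} componentwise to the components of $F$ produces a unique decomposition $F(z) = \sum_{h=0}^{n-1} \overline{z}^h G_h(z)$ with holomorphic $G_h \colon D \to \HH \otimes_\R \C$. To finish, I would show each $G_h$ is itself a stem function by a uniqueness trick: letting $\tau$ denote the $\C$-antilinear conjugation on $\HH \otimes_\R \C$, substituting $z \mapsto \overline{z}$ in the decomposition and applying $\tau$, then using the stem identity $F(\overline{z}) = \tau F(z)$, produces a second polyanalytic decomposition $F(z) = \sum_{h=0}^{n-1} \overline{z}^h \tau(G_h(\overline{z}))$; the maps $z \mapsto \tau(G_h(\overline z))$ are again holomorphic by a direct Cauchy-Riemann check. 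Uniqueness of the classical decomposition then forces $G_h(z) = \tau(G_h(\overline{z}))$, which is exactly the stem condition~\eqref{eq:stem1}; hence $f_h := \I(G_h)$ is slice regular by Theorem~\ref{thm:GP2013}(e), and $f(x) = \sum_{h=0}^{n-1} \overline{x}^h f_h(x)$ as required.

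The hard part will be the sliceness assumption, i.e., showing that every $f \in \mc{SP}_n(\OO,\HH)$ is automatically of the form $\I(F)$ for some stem $F$ of class $\mscr{C}^n$. Definition~\ref{def:sl-poly-sbs} imposes only $f \in \mscr{C}^n$ and the slice-by-slice PDEs, with no built-in slice structure. For $n = 1$ the independence-of-$I$ argument of Remark~\ref{rem:GS-indep-from-I} recovers sliceness from the first-order Cauchy-Riemann equations, which pin the Taylor coefficients of $f_I$ at a real base point $x_0 \in \OO \cap \R$ to $\tfrac{1}{h!}\partial_\alpha^h f(x_0)$, manifestly independent of $I$. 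For $n \geq 2$ the weaker constraint $\debarIn f_I = 0$ does not determine these coefficients to this extent, and I would not expect sliceness to follow in general. Indeed the introduction already asserts the strict inclusion $\mc{SP}^n \subsetneq \mc{SP}_n$ for $n \geq 2$, with explicit counterexamples promised in Remark~\ref{1.14}, so the theorem as stated cannot hold for such $n$; my plan would at best yield the decomposition on the subclass of slice functions inside $\mc{SP}_n(\OO,\HH)$.
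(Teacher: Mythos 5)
You have read the situation exactly right, and your refusal to prove the stated equality is the correct outcome: the paper does not prove Theorem \ref{thm:poly-decomposition} at all. It quotes it from \cite{ADS1} and then \emph{refutes} it for every $n\geq2$, via the counterexample $v(x)=-ixi$ of Remark \ref{1.14}, which is real analytic with $\debarI^{\,2}v_I=0$ on every slice (so $v\in\bigcap_{n\geq2}\mc{SP}_n(\OO,\HH)$) yet is not even a slice function and admits no decomposition of the form \eqref{eq:decomposition}. Your identification of the failure point is also the paper's: the `independence from $I$' argument used in \cite{ADS1} pins down the slice structure only for $n=1$, and for $n\geq2$ the coefficients of the slicewise polyanalytic decomposition genuinely depend on $I$ (the paper computes $v_I(x_I)=x_I\frac{1}{2}(1+IiIi)+\overline{x_I}\frac{1}{2}(1-IiIi)$ explicitly to exhibit this). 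Your proof of the inclusion $\mc{SP}^n(\OO,\HH)\subset\mc{SP}_n(\OO,\HH)$ via $\debarI\overline{x_I}=1$ and iterated Leibniz is the same as the paper's (Lemma \ref{lem:leibniz}$(\mr{ii})$ and the discussion following the theorem statement). Your fallback claim --- that the decomposition does hold for those $f\in\mc{SP}_n(\OO,\HH)$ that are additionally slice functions of class $\mc{S}^n$ --- is precisely equivalence $(\mr{a})\Leftrightarrow(\mr{c})$ of Theorem \ref{main}; there your route differs from the paper's: you would apply the classical one-variable decomposition componentwise at the stem level and recover the stem condition for the coefficients $G_h$ by uniqueness, whereas the paper argues by induction on $n$, differentiating once with $\frac{\partial}{\partial x^c}$, applying the induction hypothesis, and integrating back with Lemma \ref{lem:leibniz}$(\mr{iv})$. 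Both are sound; your version makes the role of Theorem \ref{thm:classical} more transparent, while the paper's induction avoids having to verify uniqueness of the polyanalytic decomposition and the stem property of the $G_h$.
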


The inclusion $\mc{SP}^n(\OO,\HH)\subset\mc{SP}_n(\OO,\HH)$ follows easily from Leibniz's rule for $\debarI$ and the slice re\-gularity of the $f_h$'s. Let $f\in\mc{SP}^n(\OO,\HH)$ with $f=\sum_{h=0}^{n-1}\overline{x}^hf_h$ for some $f_0,\ldots,f_{n-1}\in\mc{SR}(\OO,\HH)$. For each $h\in\N^*$ and for each $I\in\sq_\HH$, we have $\debarI(\overline{x}^hf_h)_I=h\overline{x}^{h-1}f_h+\overline{x}^h\debarI f_I=h\overline{x}^{h-1}f_h$ on $\OO_I$, see Lemma \ref{lem:leibniz}$(\mr{ii})$ below for details. It follows that $\debarIn f_I=0$ on $\OO_I$ for all $I\in\sq_\HH$, i.e., $f\in\mc{SP}_n(\OO,\HH)$.

In \cite[Proposition 3.6]{ADS1}, the authors prove the converse inclusion $\mc{SP}_n(\OO,\HH)\subset\mc{SP}^n(\OO,\HH)$ by imitating the Gentili-Struppa `independence from $I$' argument we recalled in Remark \ref{rem:GS-indep-from-I}.

The next remark shows that, for each $n\geq2$, the latter inclusion $\mc{SP}_n(\OO,\HH)\subset\mc{SP}^n(\OO,\HH)$ is not correct; in particular, Theorem \ref{thm:poly-decomposition} is not correct. More precisely, we give examples of slice polyanalytic functions of each order $n\geq2$, which are not slice functions and do not admit any decomposition of the form \eqref{eq:decomposition}. In particular, it turns out that, in general, slice polyanalytic functions of each order $n\geq2$ do not satisfy neither the identity principle nor the representation formula.

\begin{remark}\label{1.14}
Let $v:\OO\to\HH$ be the function defined by
\begin{equation}
v(x):=-ixi\qquad\forall x\in\OO.
\end{equation}
Consider $I\in\sq_\HH$ and the restriction $v_I:\OO_I\to\HH$ of $v$ to $\OO_I$. Note that $v_I(\alpha+I\beta)=-i(\alpha+I\beta)i=\alpha-iIi\beta$ for all $\alpha,\beta\in\R$ with $\alpha+i\beta\in D$. We deduce that $\debarI v_I=\frac{1}{2}(1-IiIi)$ and $\debarI^{\,2}v_I=0$ on $\OO_I$. Since $v\in\mscr{C}^\omega(\OO,\HH)$ and $\debarI^{\,2}v_I=0$ on $\OO_I$ for all $I\in\sq_\HH$, it follows that
\begin{equation}
v\in\bigcap_{n\geq2}\mc{SP}_n(\OO,\HH).
\end{equation}

The function $v$ is not a (left) slice function. Otherwise, being $v_i(x)=-ixi=-i^2x=x$ for all $x\in\OO_i$, the identity principle for slice functions (which follows immediately from \eqref{eq:slice} and \eqref{eq:stem}) would imply that $v(x)=x$ or, equivalently, $xi=ix$ for all $x\in\OO$, which is a contradiction. A~similar argument proves that $v$ is not even a right slice function.

The identity principle is not valid for slice polyanalytic functions of each order $n\geq2$. Indeed, the function $v$ and the inclusion function $x:\OO\hookrightarrow\HH$ are distinct elements of $\bigcap_{n\geq2}\mc{SP}_n(\OO,\HH)$; however, they coincide on the whole $\OO_i$.

The function $v$ does not satisfy the representation formula in the following strong sense: given any $\alpha_0+i\beta_0\in D$ with $\alpha_0,\beta_0\in\R$ and $\beta_0>0$, there do not exist $a_0,b_0\in\HH$ such that $v(\alpha_0+I\beta_0)=a_0+Ib_0$ or, equivalently, $\alpha_0-iIi\beta_0=a_0+Ib_0$ for all $I\in\sq_\HH$. Indeed, if we set $I=\pm i$ in the latter equation, we deduce that $\alpha_0=a_0$ and $\beta_0=b_0$. On the other hand, if we set $I=j$, we obtain $-iji=j$, which is false because $-iji=-i(-ij)=i^2j=-j$. 

The function $v$ cannot be decomposed into the form \eqref{eq:decomposition}, i.e., it holds:
\begin{equation}\label{eq:not}
v\not\in\bigcup_{n\in\N^*}\mc{SP}^n(\OO,\HH).
\end{equation}
Suppose on the contrary that there exist $n\in\N^*$ and slice regular functions $f_0,\ldots,f_{n-1}:\OO\to\HH$ such that $v(x)=\sum_{h=0}^{n-1}\overline{x}^hf_h(x)$ for all $x\in\OO$. Without loss of generality, we can assume that $f_{n-1}$ is not constantly equal to $0$. It follows that $n\leq2$. Indeed, if $n$ would be $\geq3$, then $0=\debar_i^{\,n-3}(\debar_i^{\,2}v_i)=\debar_i^{\,n-1}v_i=(n-1)!f_{n-1}|_{\OO_i}$; hence $f_{n-1}$ would vanish on the whole $\OO$ by the identity principle for slice functions (recall that implication $(\mr{a})\Rightarrow(\mr{e})$ of Theorem \ref{thm:GP2013} implies that $f_{n-1}$ is a slice function). Thus, we can write $v(x)=f_0(x)+\overline{x}f_1(x)$ for all $x\in\OO$; in particular, $x_i=v_i(x_i)=f_0(x_i)+\overline{x_i}f_1(x_i)$ for all $x_i=\alpha+i\beta\in\OO_i$ with $\alpha,\beta\in\R$. Consequently, $0=\debar_i(x_i)=f_1(x_i)$ on $\OO_i$. Using the identity principle for slice functions again, we deduce that $f_1(x)=0$ and $f_0(x)=x$ for all $x\in\OO$. This implies that $-ixi=v(x)=f_0(x)+\overline{x}f_1(x)=x$ and hence $xi=ix$ for all $x\in\OO$, which is impossible. This proves \eqref{eq:not}.

Let $I\in\sq_\HH$. Since $\debarI^{\,2}v_I=0$ on $\OO_I$, Theorem \ref{thm:classical} ensures the existence of functions $f_{0,I},f_{1,I}:\OO_I\to\HH$ such that $\alpha-iIi\beta=v_I(x_I)=f_{0,I}(x_I)+\overline{x_I}f_{1,I}(x_I)$ for all $x_I=\alpha+I\beta\in\OO_I$ with $\alpha,\beta\in\R$, and $f_{0,I}$ and $f_{1,I}$ are holomorphic w.r.t. the complex structure induced by the left multiplication by $I$, i.e., $f_{0,I}$ and $f_{1,I}$ are of class $\mscr{C}^1$ and $\debarI f_{0,I}=\debarI f_{1,I}=0$ on $\OO_I$. Note that $f_{1,I}=\debarI v_I=\frac{1}{2}(1-IiIi)$, so
\[\textstyle
f_{0,I}(x_I)=\alpha-iIi\beta-(\alpha-I\beta)\frac{1}{2}(1-IiIi)=\alpha\frac{1}{2}(1+IiIi)+\beta\frac{1}{2}(I-iIi)=x_I\frac{1}{2}(1+IiIi).
\]
Consequently,
\[\textstyle
v_I(x_I)=x_I\frac{1}{2}(1+IiIi)+\overline{x_I}\frac{1}{2}(1-IiIi)
\]
for all $I\in\sq_\HH$ and for all $x_I\in\OO_I$. Note that the quaternions $\frac{1}{2}(1+IiIi)$ and $\frac{1}{2}(1-IiIi)$ depend on $I$; indeed, $IiIi=1$ if $I=i$ and $IiIi=-1$ if $I=j$. Thus, in this situation, the Gentili-Struppa `independence from $I$' argument does not work. 

Let $v_r:\OO\to\HH$ be the functions $v_r(x):=ix$. The function $v_r$ is an example of right slice regular function belonging to $\big(\bigcap_{n\geq2}\mc{SP}_n(\OO,\HH)\big)\setminus\big(\bigcup_{n\in\N^*}\mc{SP}^n(\OO,\HH)\big)$.

Besides some results of \cite{ADS1}, the examples described in this remark could also invalidate some results contained in papers that use \cite{ADS1}, as \cite{ACDS,ADS2}. \bs 
\end{remark}

{\it From now on, we will focus on global slice polyanalytic functions.}

Let $n\in\N^*$. If $k\in\{1,\ldots,n\}$ and $g\in\mscr{C}^n(\OO_*,\HH)$, then $\thetabar^k(g)\in\mscr{C}^{n-k}(\OO_*,\HH)$, where $\thetabar^k$ is the $k^{\mr{th}}$ power of $\thetabar$, i.e., $\thetabar^k=\thetabar\circ\ldots\circ\thetabar$ ($k$-times). In particular, the differential operator
\[
\thetabar^n:\mscr{C}^n(\OO_*,\HH)\to\mscr{C}^0(\OO_*,\HH),\quad g\mapsto\thetabar^n(g)
\]
is well-defined. For short, we write $\thetabar^kg$ in place of $\thetabar^k(g)$ for all $k\in\N^*$ and $g\in\mscr{C}^k(\OO_*,\HH)$.

Let $\mc{S}^n(\OO,\HH)$ be the set of slice functions from $\OO$ to $\HH$ induced by stem functions $F=F_1+\ui F_2:D\to\HH\otimes\C$ of class $\mscr{C}^n$ (i.e., $F_1,F_2:D\to\HH$ are of class $\mscr{C}^n$). Given $f=\I(F)\in\mc{S}^n(\OO,\HH)$, we define $\big(\frac{\partial}{\partial x^c}\big)^nf:\OO\to\HH$ by
\[
\left(\frac{\partial}{\partial x^c}\right)^nf:=\I\left(\frac{\partial^nF}{\partial\overline{z}^n}\right).
\]
Evidently, it holds $\big(\frac{\partial}{\partial x^c}\big)^nf=\big(\frac{\partial}{\partial x^c}\big)\big(\big(\frac{\partial}{\partial x^c}\big)^{n-1}f\big)$, where $\big(\frac{\partial}{\partial x^c}\big)^0f:=f$.

We have:

\begin{lemma}\label{lem:reg-n}
Let $n\in\N$ and let $f\in\mc{S}^n(\OO,\HH)$. Then $f\in\mscr{C}^n_\sbs(\OO,\HH)$ and $f_*\in\mscr{C}^n(\OO_*,\HH)$.
\end{lemma}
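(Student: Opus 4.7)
\textbf{Proof plan for Lemma \ref{lem:reg-n}.} The plan is to verify the two conclusions separately, in both cases exploiting the explicit formula \eqref{eq:slice} defining $f=\I(F)$. Throughout, write $F=F_1+\ui F_2$ with $F_1,F_2\in\mscr{C}^n(D,\HH)$.

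\emph{Slice-by-slice regularity.} For each $I\in\sq_\HH$ and $\alpha+i\beta\in D$, definitions \eqref{def:phi_I} and \eqref{eq:slice} give
\[
(f_I\circ\phi_I)(\alpha+i\beta)=f(\alpha+I\beta)=F_1(\alpha+i\beta)+IF_2(\alpha+i\beta),
\]
so $f_I\circ\phi_I=F_1+IF_2$ as maps $D\to\HH$. Since left multiplication by the fixed quaternion $I$ is $\R$-linear and hence real analytic on $\HH$, the sum $F_1+IF_2$ inherits the $\mscr{C}^n$ regularity of $F_1$ and $F_2$. This gives $f_I\circ\phi_I\in\mscr{C}^n(D,\HH)$ for every $I\in\sq_\HH$, i.e., $f\in\mscr{C}^n_\sbs(\OO,\HH)$.

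\emph{Global regularity on $\OO_*$.} The idea is to write $f_*$ as the composition of $F_1,F_2$ with a globally smooth change of coordinates on $\OO_*$. For $x=x_0+x_1i+x_2j+x_3k\in\OO_*$, let $\vec y(x):=x_1i+x_2j+x_3k$, which is nonvanishing on $\OO_*$, and set
\[
\alpha(x):=x_0,\qquad \beta(x):=|\vec y(x)|=\sqrt{x_1^2+x_2^2+x_3^2},\qquad I(x):=\vec y(x)/\beta(x)\in\sq_\HH.
\]
These three maps are real analytic on $\OO_*$, since $\beta$ never vanishes there. By construction $x=\phi_{I(x)}(\alpha(x)+i\beta(x))$, and because $\OO=\OO_D$ we have $\alpha(x)+i\beta(x)\in D$ (note $D$ is conjugation-invariant, so the positive choice of $\beta$ causes no trouble). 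Applying \eqref{eq:slice} slicewise yields the key identity
\[
f_*(x)=F_1\bigl(\alpha(x)+i\beta(x)\bigr)+I(x)\,F_2\bigl(\alpha(x)+i\beta(x)\bigr)\qquad\forall x\in\OO_*.
\]
The map $x\mapsto\alpha(x)+i\beta(x)$ is a real analytic map $\OO_*\to D\subset\C$, so $F_j\circ(\alpha+i\beta)\in\mscr{C}^n(\OO_*,\HH)$ for $j=1,2$ by the chain rule. Left multiplication by the $\mscr{C}^\omega$ function $I(x)$ preserves $\mscr{C}^n$ regularity, so the second summand is $\mscr{C}^n$ too, and we conclude $f_*\in\mscr{C}^n(\OO_*,\HH)$.

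\emph{Where the subtlety lies.} There is no substantial obstacle: the only point requiring a moment of care is verifying that the coordinate map $x\mapsto\alpha(x)+i\beta(x)$ actually takes values in $D$ (so the compositions with $F_1,F_2$ make sense) and is smooth on $\OO_*$. Both follow from $\OO=\OO_D$, the conjugation-invariance of $D$, and the nonvanishing of $\vec y$ on $\OO_*$. Notably, the same parametrization breaks down at the real points of $\OO$ because $I(x)$ loses its definition there; this is precisely why the global conclusion is restricted to $\OO_*$ rather than all of $\OO$.
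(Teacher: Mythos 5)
Your proposal is correct and follows essentially the same route as the paper: the identity $f_I\circ\phi_I=F_1+IF_2$ for the slice-by-slice part, and the factorization $f_*(x)=F_1(\zeta(x))+\mscr{I}(x)F_2(\zeta(x))$ with $\zeta(x)=\mr{Re}(x)+i|\mr{Im}(x)|$ and $\mscr{I}(x)=|\mr{Im}(x)|^{-1}\mr{Im}(x)$ for the global part on $\OO_*$. Your added remarks on why $\zeta$ maps into $D$ and why the parametrization fails on $\R$ are sound but not needed beyond what the paper records.
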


By the latter result, if $f\in\mc{S}^n(\OO,\HH)$ for some $n\in\N^*$, then $\debarIn f_I$ and $\thetabar^n f_*$ make sense.

Our next result extends Theorems \ref{thm:GP2013} and \ref{thm:GP2013-sbs} to global slice polyanalytic functions of any order. 

\begin{theorem}\label{main}
Let $n\in\N^*$ and let $f:\OO\to\HH$ be a function. The following assertions are equivalent.
\begin{itemize}
 \item[$(\mr{a})$] $f\in\mc{SP}^n(\OO,\HH)$, i.e., $f$ is global slice polyanalytic of order $n$.
 \item[$(\mr{b})$] $f\in\mc{S}^{n-1}(\OO,\HH)$, $f_*\in\mscr{C}^n(\OO_*,\HH)$ and $f_*\in\ker(\thetabar^n)$.
 \item[$(\mr{c})$] $f\in\mc{S}^n(\OO,\HH)$ and $\debarIn f_I=0$ on $\OO_I$ for all $I\in\sq_\HH$.
 \item[$(\mr{c}')$] $f\in\mc{S}^n(\OO,\HH)$ and $f_*\in\ker(\thetabar^n)$.
 \item[$(\mr{c}'')$] $f\in\mc{S}^n(\OO,\HH)$ and $\big(\frac{\partial}{\partial x^c}\big)^nf=0$ on $\OO$.
\end{itemize} 
\end{theorem}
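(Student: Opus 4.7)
The plan is to close the equivalences through the cycle (a)$\Rightarrow$(c), (c)$\Leftrightarrow$(c')$\Leftrightarrow$(c''), (c)$\Rightarrow$(a), (a)$\Rightarrow$(b), and (b)$\Rightarrow$(a) by induction on $n$, with Theorem \ref{thm:GP2013-sbs} as base case. The four implications emanating from (a) will all follow from a single computation: assuming $f=\sum_{h=0}^{n-1}\overline{x}^h f_h$ with slice regular $f_h$, each product $\overline{x}^h f_h$ is a slice function because the stem of $\overline{x}^h$ is $\R$-valued, so pointwise product coincides with slice product; by Corollary \ref{cor:GP2013} each $f_h$ is real analytic, so $f$ is a real analytic slice function, whence $f\in\mc{S}^{n-1}(\OO,\HH)$ and $f_*\in\mscr{C}^n(\OO_*,\HH)$. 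The Leibniz identity $\debarI(\overline{x}^h g)=h\overline{x}^{h-1}g+\overline{x}^h\debarI g$ combined with $\debarI f_{h,I}=0$ yields $\debarIn f_I=0$ on $\OO_I$ for every $I\in\sq_\HH$, which suffices to verify (b), (c), (c'), and (c'').

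Next I would handle the equivalences among (c), (c'), (c'') and the key implication (c)$\Rightarrow$(a) at the stem function level. Writing $f=\I(F)\in\mc{S}^n(\OO,\HH)$, a direct check shows that $\partial F/\partial\overline{z}$ is again a stem function, and by iteration $(\partial/\partial x^c)^n f=\I(\partial^n F/\partial\overline{z}^n)$ and $\debarIn f_I=\I(\partial^n F/\partial\overline{z}^n)|_{\OO_I}$ for every $I$. The injectivity of $\I$ via \eqref{eq:stem} then gives (c)$\Leftrightarrow$(c''). Iterating the identity $\thetabar g=\debarI(g|_{\OO_I\setminus\R})$ produces $\thetabar^n f_*|_{\OO_I\setminus\R}=\debarIn f_I|_{\OO_I\setminus\R}$, and density of $\OO_I\setminus\R$ in $\OO_I$ together with continuity of $\debarIn f_I$ on $\OO_I$ yield (c)$\Leftrightarrow$(c'). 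For (c)$\Rightarrow$(a), the hypothesis reduces to $\partial^n F/\partial\overline{z}^n=0$ on $D$. I would apply Theorem \ref{thm:classical} componentwise, via the identification $\HH\otimes_\R\C\cong\C^4$ induced by the basis $1,i,j,k$ of $\HH$, to obtain a unique decomposition $F(z)=\sum_{h=0}^{n-1}\overline{z}^h F_h(z)$ with holomorphic $F_h:D\to\HH\otimes_\R\C$. The crux is verifying that each $F_h$ is itself a stem function. Letting $c$ denote the conjugation of $\HH\otimes_\R\C$ that fixes $\HH$ (so the stem condition on $F$ reads $F(\overline{z})=c(F(z))$), substituting $z\mapsto\overline{z}$ in the decomposition of $F$ and applying $c$ to both sides of the original identity yield $\sum_{h=0}^{n-1} z^h F_h(\overline{z})=\sum_{h=0}^{n-1} z^h c(F_h(z))$ on $D$. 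Both $z\mapsto F_h(\overline{z})$ and $z\mapsto c(F_h(z))$ are antiholomorphic (the latter because $c$ is $\C$-antilinear, so $c\circ F_h$ reverses the Cauchy--Riemann equations), so uniqueness of the analogous decomposition with antiholomorphic coefficients (proved by descending $\partial/\partial z$ differentiation just as in Theorem \ref{thm:classical}) forces $F_h(\overline{z})=c(F_h(z))$. Then $f_h:=\I(F_h)$ is slice regular by Theorem \ref{thm:GP2013}, and transporting $F=\sum_h\overline{z}^h F_h$ through $\I$, again using that $\overline{z}^h$ has $\R$-valued stem, produces the desired $f(x)=\sum_{h=0}^{n-1}\overline{x}^h f_h(x)$.

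Finally, (b)$\Rightarrow$(a) will be proved by induction on $n$. The base case $n=1$ is Theorem \ref{thm:GP2013-sbs}: since $\mc{S}^0(\OO,\HH)\subset\mscr{C}^0_\sbs(\OO,\HH)$ (a slice function with continuous stem is automatically slice-by-slice continuous), hypothesis (b') applies and $f\in\mc{SR}(\OO,\HH)=\mc{SP}^1(\OO,\HH)$. For the inductive step with $n\ge 2$, I would set $g:=(\partial/\partial x^c)f\in\mc{S}^{n-2}(\OO,\HH)$, which makes sense because $f\in\mc{S}^{n-1}$; on $\OO_*$ one has $g_*=\thetabar f_*$, so $g_*\in\mscr{C}^{n-1}(\OO_*,\HH)$ and $\thetabar^{n-1}g_*=\thetabar^n f_*=0$. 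Thus $g$ satisfies hypothesis (b) at order $n-1$, and the inductive hypothesis produces slice regular $g_0,\ldots,g_{n-2}$ with $g=\sum_{k=0}^{n-2}\overline{x}^k g_k$. Setting $f_h:=g_{h-1}/h$ for $h=1,\ldots,n-1$ and $f_0:=f-\sum_{h=1}^{n-1}\overline{x}^h f_h$, the Leibniz rule $(\partial/\partial x^c)(\overline{x}^h f_h)=h\overline{x}^{h-1}f_h$ gives $(\partial/\partial x^c)f_0=g-\sum_{k=0}^{n-2}\overline{x}^k g_k=0$ by the choice of the $f_h$, while $f_0\in\mc{S}^1(\OO,\HH)$ since $f\in\mc{S}^{n-1}\subset\mc{S}^1$ and the terms $\overline{x}^h f_h$ are real analytic; Theorem \ref{thm:GP2013} then yields $f_0\in\mc{SR}(\OO,\HH)$, completing the decomposition. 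I expect the hard part to be the stem function symmetry step in (c)$\Rightarrow$(a), where the fact that $c$ is simultaneously a ring homomorphism of $\HH\otimes_\R\C$ and $\C$-antilinear must be handled carefully in the uniqueness argument that promotes each polyanalytic component $F_h$ to a stem function.
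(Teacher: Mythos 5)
Your proof is correct, and most of it matches the paper's argument step for step: the implication from $(\mr{a})$ to the other four conditions via the Leibniz rule and the real analyticity of the holomorphic stems $F^{(h)}$; the equivalences $(\mr{c})\Leftrightarrow(\mr{c}')\Leftrightarrow(\mr{c}'')$ via the identities $\big(\frac{\partial}{\partial x^c}\big)^nf=\debarIn f_I$ on $\OO_I$ and $\thetabar^nf_*=\debarIn f_I$ on $\OO_I\setminus\R$ together with density; and the induction for $(\mr{b})\Rightarrow(\mr{a})$ in which one passes to $g=\frac{\partial f}{\partial x^c}$, checks $g_*=\thetabar f_*\in\mscr{C}^{n-1}(\OO_*,\HH)$, applies the inductive hypothesis, and antidifferentiates the powers of $\overline{x}$. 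The one genuinely different ingredient is your treatment of $(\mr{c})\Rightarrow(\mr{a})$: the paper proves the equivalent implication $(\mr{c}'')\Rightarrow(\mr{a})$ by a second induction on $n$ of exactly the same antidifferentiation type, whereas you descend to the stem function, apply Balk's Theorem \ref{thm:classical} componentwise under $\HH\otimes_\R\C\cong\C^4$ to obtain $F=\sum_{h}\overline{z}^hF_h$ with $F_h$ holomorphic, and then prove the additional lemma that each $F_h$ inherits the stem symmetry $F_h(\overline{z})=c(F_h(z))$ from $F$. Your symmetry argument is sound: $z\mapsto c(F_h(z))$ is antiholomorphic because $c$ is a $\C$-antilinear ring automorphism fixing $\HH$, and applying $(\partial/\partial z)^{n-1},(\partial/\partial z)^{n-2},\ldots$ to the identity $\sum_hz^h\big(F_h(\overline{z})-c(F_h(z))\big)=0$ forces all coefficients to vanish. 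What your route buys is a non-inductive, structurally transparent proof exhibiting the global decomposition as the classical polyanalytic one transported through $\I$; what the paper's route buys is uniformity, since the same antidifferentiation step does double duty for $(\mr{b})\Rightarrow(\mr{a})$ and $(\mr{c}'')\Rightarrow(\mr{a})$ without ever needing the conjugation-equivariance of Balk's decomposition.
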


As a consequence, we obtain:

\begin{corollary}\label{cor:main}
For each $n\in\N^*$, it holds:
\[
\mc{SP}^n(\OO,\HH)=\{f\in\mc{S}^n(\OO,\HH):f_*\in\ker(\thetabar^n)\}\subset\mscr{C}^\omega(\OO,\HH).
\]
\end{corollary}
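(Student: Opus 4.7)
The corollary is an almost-immediate consequence of Theorem~\ref{main}, so my plan is brief: split the claim into its two parts --- the set equality and the inclusion into $\mscr{C}^\omega(\OO,\HH)$ --- and dispatch each. The main obstacle is entirely hidden upstream in Theorem~\ref{main}; granted that theorem, the corollary will follow in a few lines with no further machinery.

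For the set equality $\mc{SP}^n(\OO,\HH) = \{f \in \mc{S}^n(\OO,\HH) : f_* \in \ker(\thetabar^n)\}$, I would simply invoke equivalence (a)$\Leftrightarrow$(c') of Theorem~\ref{main}: this is literally the statement that a function $f$ lies in $\mc{SP}^n(\OO,\HH)$ if and only if $f \in \mc{S}^n(\OO,\HH)$ and its restriction $f_*$ to $\OO_*$ is annihilated by $\thetabar^n$. Note that Lemma~\ref{lem:reg-n} guarantees that the expression $\thetabar^n f_*$ makes sense for every such $f$, so the set on the right-hand side is well-defined.

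For the inclusion $\mc{SP}^n(\OO,\HH) \subset \mscr{C}^\omega(\OO,\HH)$, I would unpack the defining decomposition of $f \in \mc{SP}^n(\OO,\HH)$: by Definition~\ref{def:sl-poly-global} there exist slice regular functions $f_0, \ldots, f_{n-1}:\OO \to \HH$ with $f(x) = \sum_{h=0}^{n-1} \overline{x}^h f_h(x)$ on $\OO$. Corollary~\ref{cor:GP2013} gives $f_h \in \mscr{C}^\omega(\OO,\HH)$ for each $h$; moreover, the conjugation $x \mapsto \overline{x}$ is a polynomial in the four real coordinates of $\HH = \R^4$ and is therefore also real analytic. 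Since $\mscr{C}^\omega(\OO,\HH)$ is closed under pointwise multiplication and finite sums, each product $\overline{x}^h f_h$ --- and hence the finite sum $f$ itself --- lies in $\mscr{C}^\omega(\OO,\HH)$, completing the proof.
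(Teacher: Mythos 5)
Your proposal is correct and follows essentially the same route as the paper: the set equality is exactly equivalence $(\mr{a})\Leftrightarrow(\mr{c}')$ of Theorem~\ref{main}, and your argument for the inclusion into $\mscr{C}^\omega(\OO,\HH)$ is precisely the paper's proof of Lemma~\ref{lem:r-a-spn} (real analyticity of each $f_h$ via Corollary~\ref{cor:GP2013}, equivalently Lemma~\ref{lem:real-analytic}, combined with closure of real analytic functions under products with the polynomial map $x\mapsto\overline{x}^h$ and finite sums), which the paper simply cites instead of rederiving.
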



\subsection{The monogenic case}\label{subsec:monogenic}

The results on quaternions presented above can be suitably restated in the monogenic case.

{\it Fix $m\in\N$ with $m\geq2$, and consider the Clifford algebra $\R_m=C\ell_{0,m}$.}

Let $\{e_K\}_{K\in\pa(m)}$ be the standard real vector basis of $\R_m=\R^{2^m}$, where $\pa(m)$ denotes the power set of $\{1,\dots,m\}$ and $e_\emptyset$ is the unity $1$ of $\R_m$. For each $j\in\{1,\ldots,m\}$, we write $e_j$ in place of $e_{\{j\}}$. Let $\R^{m+1}$ be the real vector subspace of $\R_m$ consisting of paravectors $x=x_0+\sum_{j=1}^mx_je_j$ with $x_0,\ldots,x_m\in\R$. For each $x=x_0+\sum_{j=1}^mx_je_j\in\R^{m+1}$, set $\mr{Re}(x):=x_0$, $\mr{Im}(x):=x-\mr{Re}(x)=\sum_{j=1}^nx_je_j$ and $\overline{x}:=\mr{Re}(x)-\mr{Im}(x)=x_0-\sum_{j=1}^nx_je_j$. Let $\sq^{m-1}$ be the $(m-1)$-sphere of paravector imaginary units, i.e.
\[\textstyle
\sq^{m-1}=\{x\in\R^{m+1}:x^2=-1\}=\big\{x\in\R^{m+1}:\mr{Re}(x)=0,|\mr{Im}(x)|=1\big\},
\]
where $|\mr{Im}(x)|=(\sum_{j=1}^mx_j^2)^{1/2}$ is the Euclidean norm of $\mr{Im}(x)=\sum_{j=1}^mx_je_j$ in $\R^{m+1}$. Given a subset $E$ of $\C$, we define the circularization $\OO_{m,E}$ of $E$ in $\R^{m+1}$ by
\[
\OO_{m,E}:=\{\alpha+I\beta\in\R^{m+1}:\alpha,\beta\in\R,\alpha+i\beta\in E, I\in\sq^{m-1}\}.
\]

A subset $S$ of $\R^{m+1}$ is called circular, or axially symmetric, if $S=\OO_{m,E}$ for some $E\subset\C$. 

{\it Fix a non-empty connected circular open subset $\OO$ of $\R^{m+1}$ such that $\OO\cap\R\neq\emptyset$ and denote by $D$ the non-empty open subset of $\C$ with $D\cap\R\neq\emptyset$ such that $\OO=\OO_{m,D}$.}

Let $I\in\sq^{m-1}$ and let $\C_I:=\{\alpha+I\beta\in\R^{m+1}:\alpha,\beta\in\R\}$. We define the set $\OO_I$, the real analytic isomorphism $\phi_I:D\to\OO_I$ and, given any function $f:\OO\to\R_m$, the restriction $f_I:\OO_I\to\R_m$ as in \eqref{def:OO_I}, \eqref{def:phi_I} and \eqref{def:f_I}, respectively. Moreover, given $g:\OO_I\to\R_m$ of class $\mscr{C}^n$ for some $n\in\N^*$, we define $\debarIn g:\OO_I\to\R_m$ as in \eqref{def:debarIn}. Let $O$ be a non-empty open subset of $\R_m$. For each $n\in\N^*$, we denote by $\mscr{C}^n(O,\R_m)$ the set of functions of class $\mscr{C}^n$ from $O$ to $\R_m$. We denote by $\mscr{C}^\omega(O,\R_m)$ the set of real analytic functions from $O$ to $\HH$. Furthermore, we define $\mscr{C}^0(O,\R_m)$ as the set of continuous functions from $O$ to~$\R_m$.

\begin{definition}[{\cite[Definition 2.1]{CoSaSt2009Israel}}]\label{def:SM}
A function $f:\OO\to\R_m$ is called \emph{slice monogenic} if $f\in\mscr{C}^1(\OO,\R_m)$ and, for each $I\in\sq^{m-1}$, it holds:
\[
\debarI f_I=0 \quad\text{on $\OO_I$.}
\]
The set of slice monogenic functions from $\OO$ to $\R_m$ is denoted by $\mc{SM}(\OO,\R_m)$. \bs
\end{definition}

Let $\R_m\to\R_m$, $x\mapsto x^c$ be the Clifford conjugation on $\R_m$ (see \cite[Definition 3.7, p. 56]{GHS}), let $\sq_{\R_m}:=\{x\in\R_m:x^2=-1,x^c=-x\}$ and, for each $I\in\sq_{\R_m}$, let $\C_I:=\{\alpha+I\beta\in\R_m:\alpha,\beta\in\R\}$. Note that $x^c=\overline{x}$ for all $x\in\R^{m+1}$. The quadratic cone $Q_{\R_m}$ of $\R_m$ is defined by $Q_{\R_m}:=\bigcup_{I\in\sq_{\R_m}}\C_I$. The circularization $\OO_D$ of $D$ in $\R_m$ is defined by
\begin{equation}\label{eq:OO_D}
\OO_D:=\{\alpha+I\beta\in\R_m:\alpha,\beta\in\R,\alpha+i\beta\in D, I\in\sq_{\R_m}\}.
\end{equation}
The set $\Omega_D$ turns out to be open in $Q_{\R_m}$. Since we assumed that $m\geq2$, we have that $e_{\{1,2\}}\in\sq_{\R_m}\setminus\sq^{m-1}$. It follows that
\[
\sq^{m-1}\subsetneqq\sq_{\R_m}, \quad \R^{m+1}\subsetneqq Q_{\R_m}\quad\text{and}\quad\OO=\OO_{m,D}\subsetneqq\OO_D.
\]

A function $F=F_1+\ui F_2:D\to\R_m\otimes_\R\C$ is a stem function if it satisfies the even-odd properties \eqref{eq:stem1}. If $F=F_1+\ui F_2:D\to\R_m\otimes_\R\C$ is a stem function and $f:\OO\to\R_m$ is the function defined by equation \eqref{eq:slice} for all $I\in\sq^{m-1}$, then we say that $f:\OO\to\R_m$ is a \emph{(left) slice function}. If this is the case, then we say that $f$ is induced by $F$ and we write $f=\I(F)$. Also in this case, $F$ is uniquely determined by $f$ via formula \eqref{eq:stem}. Moreover, the slice function $f:\OO\to\R_m$ extends uniquely to a (left) slice function $\widetilde{f}:\OO_D\to\R_m$ in the standard sense of \cite[Definition 8]{AIM2011}, by requiring that $\widetilde{f}$ satisfies equation \eqref{eq:slice} for all $I\in\sq_{\R_m}$, i.e., $\widetilde{f}(\alpha+I\beta)=F_1(\alpha+i\beta)+IF_2(\alpha+i\beta)$ for all $\alpha,\beta\in\R$ with $\alpha+i\beta\in D$ and for all $I\in\sq_{\R_m}$. We refer again to \cite{AIM2011} for further details on slice and slice regular functions on $\OO_D$.

If $F$ is of class $\mscr{C}^1$, we can define the slice function $\frac{\partial f}{\partial x^c}:\OO\to\R_m$ as in \eqref{def:slicedebar}. 

As in \eqref{def:OO_*} and \eqref{def:f_*}, we set $\OO^*:=\OO\setminus\R$ and, given any function $f:\OO\to\R_m$, we define $f_*:\OO_*\to\R_m$ by $f_*:=f|_{\OO_*}$.

\begin{definition}\label{def:thetabarmono}
We define the differential operator $\thetabarm:\mscr{C}^1(\OO_*,\R_m)\to\mscr{C}^0(\OO_*,\R_m)$ by
\[
\thetabarm:=\frac{1}{2}\left(\frac{\partial}{\partial x_0}+\frac{\mr{Im}(x)}{|\mr{Im}(x)|^2}\left(\sum_{h=1}^mx_h\frac{\partial}{\partial x_h}\right)\right).\,\text{ \bs}
\]
\end{definition}

As in \cite[Definition 1.6]{CGS2013}, we define the differential operator $G_m:\mscr{C}^1(\OO,\R_m)\to\mscr{C}^0(\OO,\R_m)$ by
\begin{equation}\label{def:Gm}
G_m:=|\mr{Im}(x)|^2\frac{\partial}{\partial x_0}+\mr{Im}(x)\left(\sum_{h=1}^mx_h\frac{\partial}{\partial x_h}\right).
\end{equation}
Evidently, if $f\in\mscr{C}^1(\OO,\R_m)$ and $x\in\OO_*$, then $G(f)(x)=2|\mr{Im}(x)|^2\,\thetabarm(f_*)(x)$.

As in the quaternionic case, given a function $f:\OO\to\R_m$, we say that $f$ is \emph{slice-by-slice continuous} if $f_I:\OO_I\to\R_m$ is continuous for all $I\in\sq^{m-1}$. We denote by $\mscr{C}^0_\sbs(\OO,\R_m)$ the set of slice-by-slice continuous functions from $\OO$ to $\R_m$. Given $n\in\N^*$, we say also that $f:\OO\to\R_m$ is a s\emph{lice-by-slice $\mscr{C}^n$} function if $f_I\circ\phi_I:D\to\R_m$ is of class $\mscr{C}^n$ for all $I\in\sq^{m-1}$. We denote by $\mscr{C}^n_\sbs(\OO,\R_m)$ the set of slice-by-slice $\mscr{C}^n$ functions from $\OO$ to $\R_m$. There exist functions $f:\OO\to\R_m$ such that $f\in\mscr{C}^n_\sbs(\OO,\R_m)$ and $f_*\in\mscr{C}^n(\OO_*,\R_m)$ for all $n\in\N^*$; however, $f\not\in\mscr{C}^0(\OO,\R_m)$: it suffices to define $f$ as in \eqref{eq:sbscont-noncont} replacing the expression `$\,x_1^2x_2(x_1^4+x_2^2+x_3^2)^{-1}\,$' with `$\,x_1^2x_2(x_1^4+\sum_{h=2}^mx_h^2)^{-1}\,$'. 

Theorems \ref{thm:GP2013} and \ref{thm:GP2013-sbs}, and Corollary \ref{cor:GP2013} extend to the monogenic case. 

\begin{theorem}\label{thm:GP2013mono}
Let $f:\OO\to\R_m$ be a function. The following assertions are equivalent.
\begin{itemize}
 \item[$(\mr{a})$] $f\in\mc{SM}(\OO,\HH)$, i.e., $f$ is slice monogenic.
  \item[$(\mr{a}')$] $f\in\mscr{C}^1_\sbs(\OO,\R_m)$ and $\debarI f_I=0$ on $\OO_I$ for all $I\in\sq^{m-1}$.
 \item[$(\mr{b})$] $f\in\mscr{C}^0(\OO,\R_m)$, $f_*\in\mscr{C}^1(\OO_*,\R_m)$ and $f_*\in\ker(\thetabarm)$.
 \item[$(\mr{b}')$] $f\in\mscr{C}^0_\sbs(\OO,\R_m)$, $f_*\in\mscr{C}^1(\OO_*,\R_m)$ and $f_*\in\ker(\thetabarm)$.
 \item[$(\mr{c})$] $f\in\mscr{C}^1(\OO,\R_m)$ and $f_*\in\ker(\thetabarm)$.
 \item[$(\mr{d})$] $f\in\mscr{C}^1(\OO,\R_m)$ and $f\in\ker(G_m)$.
 \item[$(\mr{e})$] $f\in\mc{S}^1(\OO,\HH)$ and $\frac{\partial f}{\partial x^c}=0$ on $\OO$.
\end{itemize} 
\end{theorem}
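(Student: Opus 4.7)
The plan is to mirror the proofs of Theorems \ref{thm:GP2013} and \ref{thm:GP2013-sbs} in the paravector Clifford setting, replacing $\HH$ with $\R_m$ and $\sq_\HH$ with $\sq^{m-1}$. The slice function formalism on $\OO\subset\R^{m+1}$ parallels the quaternionic one: a stem function $F:D\to\R_m\otimes_\R\C$ determines $f$ via \eqref{eq:slice} and is recovered from $f$ via \eqref{eq:stem}, and one still has the identity $G_m(f)(x)=2|\mr{Im}(x)|^2\,\thetabarm(f_*)(x)$ on $\OO_*$.

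The easy implications are $(\mr{a})\Rightarrow(\mr{a}')\Rightarrow(\mr{b}')$ and $(\mr{a})\Rightarrow(\mr{c})\Rightarrow(\mr{b})\Rightarrow(\mr{b}')$, obtained by restriction to slices and straightforward weakenings of regularity; $(\mr{c})\Leftrightarrow(\mr{d})$ follows as in \eqref{eq:thetabarG} from the identity between $G_m$ and $\thetabarm$ together with continuity of $G_m(f)$ and density of $\OO_*$ in $\OO$. For $(\mr{a})\Leftrightarrow(\mr{e})$ I would invoke the monogenic analogue of \cite[Propositions 7(3) and 8]{AIM2011}: if $f=\I(F)$ with $F$ of class $\mscr{C}^1$ then each $f_I=F_1\circ\phi_I^{-1}+I(F_2\circ\phi_I^{-1})$ is jointly $\mscr{C}^1$, and $\frac{\partial f}{\partial x^c}=0$ on $\OO$ amounts to $\frac{\partial F}{\partial\overline{z}}=0$ on $D$, which forces $\debarI f_I=0$ on every $\OO_I$; the converse reconstructs the stem from $f$ through \eqref{eq:stem}.

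The main step is $(\mr{b}')\Rightarrow(\mr{a})$, which also subsumes $(\mr{a}')\Rightarrow(\mr{a})$ since $(\mr{a}')$ trivially gives the slice-by-slice continuity and the off-real conditions in $(\mr{b}')$. Assume $(\mr{b}')$. For each $I\in\sq^{m-1}$, $f_I$ is continuous on $\OO_I$ and $\C_I$-holomorphic on $\OO_I\setminus\R$, so Riemann's removable singularity theorem in $\C_I$ extends it to a $\C_I$-holomorphic function on all of $\OO_I$. Fix $x_0\in\OO\cap\R$ and a radius $r>0$ such that the closed Euclidean disc in $\C_I$ of radius $r$ around $x_0$ lies in $\OO_I$ for every $I$, which is possible by circularity of $\OO$. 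Each $f_I$ admits a Taylor expansion $f_I(x)=\sum_{h\in\N}(x-x_0)^h a_h^I$ with $a_h^I=\frac{1}{h!}\frac{\partial^h f_I}{\partial\alpha^h}(x_0)$. Since the derivative is taken along the real direction common to every $\C_I$ and $f_I$ agrees with $f$ on $\OO\cap\R$, each $a_h^I$ equals $\frac{1}{h!}\frac{d^h}{d\alpha^h}f(\alpha)|_{\alpha=x_0}$ and is independent of $I$, exactly as in the independence-from-$I$ argument of Remark \ref{rem:GS-indep-from-I}. Writing $a_h$ for the common value, Cauchy estimates on any single slice (say $\C_{e_1}$) yield $|a_h|\leq M r^{-h}$, hence the paravector series $\sum_{h\in\N}(x-x_0)^h a_h$ converges on the circularization $U$ of the open disc of radius $r$ in $\C$ centered at $x_0$. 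As it coincides slice-by-slice with $f$, it equals $f$ on $U$, so $f$ is real analytic, in particular $\mscr{C}^1$, on $U$. Combined with $f_*\in\mscr{C}^1(\OO_*,\R_m)$, this yields $f\in\mscr{C}^1(\OO,\R_m)$, and the identities $\debarI f_I=\thetabarm(f_*)=0$ on $\OO_I\setminus\R$ extend by continuity to $\OO_I$, so $f\in\mc{SM}(\OO,\R_m)$.

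The main obstacle I anticipate lies in the transition from the punctured set $\OO_*$ to a neighbourhood of the real axis: slice-by-slice continuity alone is strictly weaker than joint continuity on $\OO$ (as in the analogue of Remark \ref{rem:c0-sbs0} for $\R_m$ sketched just before the statement), so it is only the combination with the slice Cauchy-Riemann equations that is capable of producing joint real analyticity near $\OO\cap\R$. Care must also be taken that the independence-from-$I$ argument, originally stated for $\sq_\HH$, still applies with $\sq^{m-1}$ in place of $\sq_\HH$; since that argument uses only real directional derivatives at real points and the identification of all $\C_I$ along $\R$, it transfers verbatim for every $m\geq 2$. Once joint $\mscr{C}^1$ regularity on the circular neighbourhood $U$ of $\OO\cap\R$ is secured, patching $U$ with the already $\mscr{C}^1$ open set $\OO_*$ completes the proof.
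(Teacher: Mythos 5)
Your treatment of $(\mr{b}')\Rightarrow(\mr{a})$ is sound and in fact slightly more direct than the paper's: instead of establishing the representation formula on all of $D$ and passing through condition $(\mr{e})$ as in Lemma \ref{lem:sbs}, you obtain joint real analyticity of $f$ on a circular neighbourhood $U$ of $\OO\cap\R$ from the $I$-independent Taylor coefficients, patch $U$ with $\OO_*$ (where joint $\mscr{C}^1$ regularity is part of hypothesis $(\mr{b}')$), and conclude by density of $\OO_I\setminus\R$ in $\OO_I$. Two presentational points: the extension of each $f_I$ across $\OO_I\cap\R$ is a Morera-type removability statement for a continuous function holomorphic off a line, not Riemann's theorem on isolated singularities; and since $\R_m$ has complex dimension $2^{m-1}$ over $\C_I$, you should make explicit the splitting of $f_I$ into $\C_I$-valued holomorphic components before invoking one-variable complex analysis, as the paper does with $f_I=g_1+g_2J$ in the quaternionic case.

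The genuine gap is your claim that $(\mr{a}')\Rightarrow(\mr{a})$ is subsumed by $(\mr{b}')\Rightarrow(\mr{a})$ because ``$(\mr{a}')$ trivially gives \dots the off-real conditions in $(\mr{b}')$''. It does not: $(\mr{a}')$ only says $f\in\mscr{C}^1_\sbs(\OO,\R_m)$, i.e., each $f_I\circ\phi_I$ is $\mscr{C}^1$ on $D$, which yields $f_*\in\mscr{C}^1_\sbs(\OO_*,\R_m)$ but not $f_*\in\mscr{C}^1(\OO_*,\R_m)$, the joint $\mscr{C}^1$ regularity on the open subset $\OO_*$ of $\R^{m+1}$ that $(\mr{b}')$ requires (and without which $f_*\in\ker(\thetabarm)$ is not even well posed). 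Distinguishing these two notions is precisely the point of the paper's $\mscr{C}^n$ versus $\mscr{C}^n_\sbs$ and of the $\R_m$ analogue of Remark \ref{rem:c0-sbs0}. Your final patching step uses the joint regularity of $f$ on $\OO_*$ in an essential way, so under $(\mr{a}')$ your argument only produces regularity of $f$ near $\OO\cap\R$ and says nothing on the rest of $\OO_*$. To close $(\mr{a}')\Rightarrow(\mr{a})$ you need the step you omitted from Lemma \ref{lem:sbs}: since each $f_I$ is holomorphic, hence real analytic, on the whole of $\OO_I$, the real analytic function $g_{K,H}$ on the connected set $D$ vanishes near the real points and therefore identically, giving the representation formula on all of $D$; this exhibits $f$ as $\I(F)$ for a real analytic stem function $F$, hence as jointly real analytic on all of $\OO$, i.e., condition $(\mr{e})$ and then $(\mr{a})$. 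The slice-by-slice data alone cannot be upgraded to joint regularity away from $\R$ without this global sliceness argument.
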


\begin{corollary}
$\mc{SM}(\OO,\HH)=\{f\in\mscr{C}^1(\OO,\R_m):f_*\in\ker(\thetabarm)\}=\ker(G_m)\subset\mscr{C}^\omega(\OO,\R_m)$. 
\end{corollary}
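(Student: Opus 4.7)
The plan is to mirror the proofs of Theorems \ref{thm:GP2013} and \ref{thm:GP2013-sbs}, substituting $\R_m$ for $\HH$, $\sq^{m-1}$ for $\sq_\HH$, $\thetabarm$ for $\thetabar$, and $G_m$ for $G$. Every argument there is local and rests on three ingredients that transfer verbatim to the monogenic setting: the slice decomposition $\R^{m+1}=\bigcup_{I\in\sq^{m-1}}\C_I$ with $\C_I\cap\C_J=\R$ whenever $I\neq\pm J$; the density of $\OO_*=\OO\setminus\R$ in $\OO$; and the pointwise identity $G_m(f)(x)=2|\mr{Im}(x)|^2\,\thetabarm(f_*)(x)$ on $\OO_*$ for any $f\in\mscr{C}^1(\OO,\R_m)$.

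The cheap implications are routine. Density of $\OO_*$ combined with the identity above gives $(\mr{c})\Leftrightarrow(\mr{d})$; the chain $(\mr{a})\Rightarrow(\mr{a}')\Rightarrow(\mr{c})$ (modulo the global $\mscr{C}^1$-regularity output) and $(\mr{c})\Rightarrow(\mr{b})\Rightarrow(\mr{b}')$ are immediate, as is the monogenic analogue of the identity $\thetabarm g=\debarI(g|_{\OO_I\setminus\R})$ on $\OO_I\setminus\R$ for every $I\in\sq^{m-1}$ and every $g\in\mscr{C}^1(\OO_*,\R_m)$. For $(\mr{a})\Leftrightarrow(\mr{e})$ I would invoke the standard slice-function characterization of slice monogenic functions established in \cite{AIM2011,CoSa2009,CoSa2011}: any slice monogenic $f$ is forced to be of the form $\I(F)$ with $F$ of class $\mscr{C}^1$, and then $\frac{\partial f}{\partial x^c}=\I(\partial F/\partial\bar z)$ vanishes on $\OO$ iff $\debarI f_I=0$ on $\OO_I$ for every $I\in\sq^{m-1}$.

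The substantive step is the implication $(\mr{b}')\Rightarrow(\mr{a})$, which closes the cycle. From $f_*\in\ker(\thetabarm)$ one obtains $\debarI f_I=0$ on $\OO_I\setminus\R$ for every $I\in\sq^{m-1}$; combined with the slice-by-slice continuity of $f$ and Riemann's removable-singularities theorem applied across $\OO_I\cap\R$, this yields holomorphicity of each $f_I$ on the whole $\OO_I$. I would then transport the Gentili--Struppa power-series argument recalled in Remark \ref{rem:GS-indep-from-I} to the monogenic setting: at each $x_0\in\OO\cap\R$ the expansion $f_I(x)=\sum_{h\in\N}(x-x_0)^h a_h^I$ on a disk in $\C_I$ has coefficients $a_h^I=\frac{1}{h!}\partial_I^h f_I(x_0)=\frac{1}{h!}\frac{\partial^h f}{\partial x_0^h}(x_0)$; since the last expression differentiates $f$ only in the direction of $1$, $a_h^I$ is independent of $I\in\sq^{m-1}$. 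The common $\R_m$-valued series then converges on a paravector ball and agrees with $f$ slice by slice, giving real analyticity of $f$ on a circular neighborhood $U$ of $\OO\cap\R$; combined with the off-axis hypothesis this yields $f\in\mscr{C}^1(\OO,\R_m)$, and hence $(\mr{c})$.

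The main obstacle is controlling the passage across the real axis when only slice-by-slice continuity is assumed: the independence from $I$ of the $a_h^I$ must be inferred without any \emph{a priori} $\mscr{C}^1$-regularity of $f$ at real points. This is handled by extracting $a_h^I$ from $f_I$ via Cauchy-type integral formulas on small circles in $\C_I$ contained in $\OO_I\setminus\R$, and identifying the resulting value with the iterated $\partial/\partial x_0$-derivative of $f$ at $x_0$ computed from any single slice. Once this step is secured, the remaining bookkeeping is a direct rewriting of the quaternionic proof of Theorem \ref{thm:GP2013-sbs}, yielding the full equivalence of $(\mr{a})$--$(\mr{e})$.
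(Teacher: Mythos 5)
Your overall route is the paper's own: the author proves only the quaternionic statements and declares the monogenic proofs ``similar'', and your transfer of Theorems \ref{thm:GP2013} and \ref{thm:GP2013-sbs} to $\R_m$ --- including the removability step across $\OO_I\cap\R$ and the independence from $I$ of the coefficients $a_h^I=\frac{1}{h!}\frac{d^h}{d\alpha^h}\big(f|_{\OO\cap\R}\big)(x_0)$ --- is exactly the content of Lemma \ref{lem:sbs} and Corollary \ref{cor:sbs} rewritten for paravectors. Your endgame for $(\mr{b}')\Rightarrow(\mr{a})$ differs only cosmetically: you patch real analyticity near $\OO\cap\R$ with the off-axis $\mscr{C}^1$ hypothesis to reach $(\mr{c})$ and then let density of $\OO_I\setminus\R$ kill $\debarI f_I$ on all of $\OO_I$, whereas the paper extends the representation formula to all of $D$ by analytic continuation of the auxiliary function $g_{K,H}$ and lands in $(\mr{e})$; both close the cycle of implications.

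The one genuine gap concerns the last inclusion of the corollary, $\ker(G_m)\subset\mscr{C}^\omega(\OO,\R_m)$: nowhere do you establish real analyticity of $f$ on all of $\OO$ --- your argument yields $\mscr{C}^\omega$ only on a circular neighborhood of $\OO\cap\R$ and merely $\mscr{C}^1$ on $\OO_*$, and slice-by-slice holomorphy of the $f_I$'s does not by itself give joint real analyticity off the real axis. The paper closes this with the monogenic analogue of Lemma \ref{lem:real-analytic}: once $(\mr{e})$ gives $f=\I(F)$ with $\frac{\partial F}{\partial\overline{z}}=0$, the stem function $F$ is holomorphic, hence real analytic, and $f\in\mscr{C}^\omega(\OO,\R_m)$ follows from \cite[Proposition 7(3)]{AIM2011} (concretely, from $f_*=F_1\circ\zeta+\mscr{I}\cdot(F_2\circ\zeta)$ with $\zeta$ and $\mscr{I}$ real analytic on $\OO_*$, together with the power series at real points). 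You have every ingredient for this, since you do reach $(\mr{e})$, but the conclusion must be drawn explicitly. A minor further point: ``Riemann's removable-singularities theorem'' is not the right tool for a singular set that is a real segment; the paper applies Morera's theorem to a continuous function holomorphic off $\OO_I\cap\R$, which is what your step actually requires.
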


\begin{remark}
As in \cite[Definition 1.6]{CGS2013}, we define $\mc{GM}(\OO)$ as the set of all distributional solutions $f$ of the differential equation $G_m(f)=0$ on $\OO$. By \cite[Theorem 3.5]{CGS2013}, we know that $\mc{SM}(\OO,\R_m)\subsetneqq\mc{GM}(\OO)$ (see also \cite{CS2014}). Similarly to Remark \ref{rem:distribution}, equivalence $(\mr{a})\Leftrightarrow(\mr{b})$ of Theorem \ref{thm:GP2013mono} and the equation `$G(f)(x)=2|\mr{Im}(x)|^2\,\thetabarm(f_*)(x)$ for all $x\in\OO_*$' imply that $\mc{SM}(\OO,\R_m)$ is the set of functions $f$ in $\mscr{C}^0(\OO,\R_m)$ such that $f_*$ belongs to $\mscr{C}^1(\OO_*,\R_m)$ and $G_m(f)=0$ on $\OO$ in the sense of distributions, i.e., $f\in\mc{GM}(\OO)$. \bs
\end{remark}

\begin{definition}[{\cite[Definition 5.1]{ADS2}}]
Let $n\in\N^*$. A function $f:\OO\to\R_m$ is said to be \emph{slice polymonogenic of order $n$} if $f\in\mscr{C}^n(\OO,\R_m)$ and, for each $I\in\sq^{m-1}$, it holds:
\[
\debarIn f_I=0 \quad\text{on $\OO_I$.}
\]
The set of slice polymonogenic functions of order $n$ from $\OO$ to $\R_m$  is denoted by $\mc{SM}_n(\OO,\R_m)$, i.e.,
\[
\text{$\mc{SM}_n(\OO,\R_m):=\{f\in\mscr{C}^n(\OO,\R_m):\debarIn f_I(x)=0\;\; \forall I\in\sq^{m-1},\forall x\in\OO_I\}$. \bs}
\]
\end{definition}

Similarly to the quaternionic case, we give the following definition.

\begin{definition}\label{def:sl-polymonogenic-global}
Given $n\in\N^*$ and a function $f:\OO\to\R_m$, we say that $f$ is a \emph{global slice polymonogenic function of order $n$} if there exist $f_0,\ldots,f_{n-1}\in\mc{SM}(\OO,\R_m)$ such that $f(x)=\sum_{h=0}^{n-1}\overline{x}^hf_h(x)$ for all $x\in\OO$. We denote by $\mc{SM}^n(\OO,\R_m)$ the set of global slice polymonogenic functions of order $n$ from $\OO$ to $\R_m$. \bs
\end{definition}

Evidently, it holds $\mc{SM}(\OO,\R_m)=\mc{SM}_1(\OO,\R_m)=\mc{SM}^1(\OO,\R_m)$. Theorem 5.4 of \cite{ADS2} asserts that the equality $\mc{SM}_n(\OO,\R_m)=\mc{SM}^n(\OO,\R_m)$ holds also for all $n\geq2$. As in the quaternionic slice polyanalytic case, Leibniz's rule for $\debarI$ and the slice monogenicity of the $f_h$'s imply that $\mc{SM}^n(\OO,\R_m)\subset\mc{SM}_n(\OO,\R_m)$ for all $n\geq2$.  However, by the same arguments used in Remark \ref{1.14}, we see that the function $v_m:\OO\to\R_m$, defined by $v_m(x):=-e_1xe_1$, belongs to $\big(\bigcap_{n\geq2}\mc{SM}_n(\OO,\R_m)\big)\setminus\big(\bigcup_{n\in\N^*}\mc{SM}^n(\OO,\R_m)\big)$.  

For each $n\in\N^*$, the $n^{\mr{th}}$ power $\thetabarm^{\,n}$ of $\thetabarm$ is a well-defined operator from $\mscr{C}^n(\OO_*,\R_m)$ to $\mscr{C}^0(\OO_*,\R_m)$. For short, we write $\thetabarm^{\,k}g$ in place of $\thetabarm^{\,k}(g)$ for all $k\in\N^*$ and $g\in\mscr{C}^k(\OO_*,\R_m)$. Denote by $\mc{S}^n(\OO,\R_m)$ the set of slice functions from $\OO$ to $\R_m$ induced by stem functions $F:D\to\R_m\otimes_\R\C$ of class $\mscr{C}^n$. If $f=\I(F)\in\mc{S}^n(\OO,\R_m)$, then we define $\big(\frac{\partial}{\partial x^c}\big)^nf:\OO\to\R_m$ by $\big(\frac{\partial}{\partial x^c}\big)^nf:=\I\big(\frac{\partial^nF}{\partial\overline{z}^n}\big)$. As in Lemma \ref{lem:reg-n}, if $f\in\mc{S}^n(\OO,\R_m)$ for some $n\in\N^*$, then $f\in\mscr{C}^n_\sbs(\OO,\R_m)$ and $f_*\in\mscr{C}^n(\OO_*,\R_m)$.

Also Theorem \ref{main} and its corollary extend to the monogenic case. 

\begin{theorem}\label{mainmono}
Let $n\in\N^*$ and let $f:\OO\to\R_m$ be a function. The following assertions are equivalent.
\begin{itemize}
 \item[$(\mr{a})$] $f\in\mc{SM}^n(\OO,\R_m)$., i.e., $f$ is global slice polymonogenic of order $n$.
 \item[$(\mr{b})$] $f\in\mc{S}^{n-1}(\OO,\R_m)$, $f_*\in\mscr{C}^n(\OO_*,\R_m)$ and $f_*\in\ker(\thetabarm^{\,n})$.
 \item[$(\mr{c})$] $f\in\mc{S}^n(\OO,\R_m)$ and $\debarIn f_I=0$ on $\OO_I$ for all $I\in\sq^{m-1}$.
 \item[$(\mr{c}')$] $f\in\mc{S}^n(\OO,\R_m)$ and $f_*\in\ker(\thetabarm^{\,n})$.
 \item[$(\mr{c}'')$] $f\in\mc{S}^n(\OO,\R_m)$ and $\big(\frac{\partial}{\partial x^c}\big)^nf=0$ on $\OO$.
\end{itemize} 
\end{theorem}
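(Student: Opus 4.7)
The strategy is to mirror the proof of Theorem~\ref{main} in the Clifford setting, using Theorem~\ref{thm:GP2013mono} as the base case $n=1$ and inducting on $n\geq 2$. As preliminary machinery I would record that $\overline{x}$ is the slice function induced by the stem function $\overline{z}$; that the pointwise product $\overline{x}^h g$ with any slice function $g=\I(G)$ is again a slice function with stem function $\overline{z}^h G$, because $\overline{z}$ lies in the commutative $\C$-factor of $\R_m\otimes_\R\C$; and that, since $\overline{x}_I\in\C_I$ commutes with $I$, one has the Leibniz identities
\[
\debarI(\overline{x}_I^h g)=h\overline{x}_I^{h-1}g+\overline{x}_I^h\debarI g,\qquad \thetabarm(\overline{x}_*^h g_*)=h\overline{x}_*^{h-1}g_*+\overline{x}_*^h\thetabarm g_*,
\]
valid for any $\R_m$-valued $g$ of the appropriate regularity. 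Iterating gives $\debarIn(\overline{x}_I^h g)=0$ and $\thetabarm^{\,n}(\overline{x}_*^h g_*)=0$ whenever $h<n$ and $g$ is slice monogenic.

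With these tools the implications (a)$\Rightarrow$(c), (c$'$), (c$''$) reduce to termwise application of the iterated Leibniz rules to a decomposition $f=\sum_{h=0}^{n-1}\overline{x}^h f_h$. The equivalences (c)$\Leftrightarrow$(c$'$)$\Leftrightarrow$(c$''$) are then formal: for $f=\I(F)\in\mc{S}^n$, the identity $\debarIn f_I=\I(\partial^n F/\partial\overline{z}^{\,n})|_{\OO_I}$ together with the slice-by-slice relation $\thetabarm g(x)=\debarI(g|_{\OO_I\cap\OO_*})(x)$ on $\OO_I\cap\OO_*$ (iterated $n$ times and extended to all of $\OO_I$ by continuity of the stem function and its derivatives, using that $D\setminus\R$ is dense in $D$) translate all three conditions into the single stem-level statement $\partial^nF/\partial\overline{z}^{\,n}=0$ on $D$.

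The conceptual heart is (c$''$)$\Rightarrow$(a). Here I would apply Theorem~\ref{thm:classical} componentwise, in a fixed real basis of $\R_m\otimes_\R\C$, to the stem function $F$ of $f$, producing holomorphic $F_0,\ldots,F_{n-1}:D\to\R_m\otimes_\R\C$ with $F(z)=\sum_{h=0}^{n-1}\overline{z}^h F_h(z)$. The uniqueness of this Taylor-type decomposition, combined with the stem identity $F(\overline{z})=\overline{F(z)}$ (where the bar denotes complex conjugation on the $\C$-factor), forces $F_h(\overline{z})=\overline{F_h(z)}$, so each $F_h$ is itself a stem function. Then $f_h:=\I(F_h)$ is slice monogenic by Theorem~\ref{thm:GP2013mono}, and the pointwise identity $\I(\overline{z}^h F_h)=\overline{x}^h f_h$ gives $f=\sum_{h=0}^{n-1}\overline{x}^h f_h\in\mc{SM}^n(\OO,\R_m)$.

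The remaining step is (b)$\Rightarrow$(a), which I expect to be the main obstacle because (b) only grants $f\in\mc{S}^{n-1}$ rather than $\mc{S}^n$. I would handle it by induction on $n$. Given (b), the slice derivative $\partial f/\partial x^c\in\mc{S}^{n-2}$ restricts on $\OO_*$ to $\thetabarm f_*\in\mscr{C}^{n-1}(\OO_*,\R_m)$, with $\thetabarm^{\,n-1}(\thetabarm f_*)=\thetabarm^{\,n}f_*=0$; hence the inductive hypothesis, applied to $\partial f/\partial x^c$ at order $n-1$, yields $\partial f/\partial x^c=\sum_{h=0}^{n-2}\overline{x}^h g_h$ for slice monogenic $g_0,\dots,g_{n-2}$. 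Setting $\widetilde{f}:=\sum_{h=0}^{n-2}(h+1)^{-1}\overline{x}^{h+1}g_h\in\mc{SM}^n(\OO,\R_m)$, a stem-level Leibniz computation (using $\partial g_h/\partial x^c=0$) gives $\partial\widetilde{f}/\partial x^c=\partial f/\partial x^c$. Consequently $f-\widetilde{f}\in\mc{S}^{n-1}\subset\mc{S}^1$ has vanishing slice derivative, hence is slice monogenic by Theorem~\ref{thm:GP2013mono}, and $f=(f-\widetilde{f})+\widetilde{f}$ is the required global polymonogenic decomposition. The delicate part is the bookkeeping: keeping the regularity classes aligned across the induction and verifying, at each reuse of the Leibniz rule, that the pointwise products of $\overline{x}^h$ with slice functions coincide with the Ghiloni--Perotti slice products, a fact that depends critically on $\overline{x}$ taking values in $\C_I$ for each $I\in\sq^{m-1}$.
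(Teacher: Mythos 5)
Your proposal is correct, and for all but one step it follows the same route as the paper, which proves Theorem~\ref{mainmono} simply by transcribing the proof of Theorem~\ref{main} into the Clifford setting: the easy implications via the Leibniz identities for $\debarI$ and $\thetabarm$ applied to $\overline{x}^h f_h$, the equivalences $(\mr{c})\Leftrightarrow(\mr{c}')\Leftrightarrow(\mr{c}'')$ via the slice-by-slice identities $(\thetabarm^{\,n}f_*)_I=\debarIn f_I$ and $\big(\big(\frac{\partial}{\partial x^c}\big)^nf\big)_I=\debarIn f_I$ plus density of $\OO_I\setminus\R$ in $\OO_I$, and $(\mr{b})\Rightarrow(\mr{a})$ by induction on $n$ through the slice derivative exactly as you describe. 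The one place you genuinely diverge is $(\mr{c}'')\Rightarrow(\mr{a})$: the paper argues by induction on $n$ (Balk's strategy), applying the inductive hypothesis to $\frac{\partial f}{\partial x^c}$, antidifferentiating the resulting decomposition with the Leibniz rule for $\big(\frac{\partial}{\partial x^c}\big)$, and invoking $(\mr{e})\Rightarrow(\mr{a})$ of Theorem~\ref{thm:GP2013mono} for the remainder; you instead apply Theorem~\ref{thm:classical} componentwise to the stem function $F$ and recover the even--odd (stem) property of the holomorphic components $F_h$ from the uniqueness of the polyanalytic decomposition. Your variant is more direct and makes it transparent that the global decomposition already lives at the stem level, at the cost of importing the uniqueness statement from the classical theory (which is standard and is in Balk); the paper's induction avoids that but needs the antidifferentiation step. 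Two small bookkeeping points: you never state an implication into $(\mr{b})$, but $(\mr{c}')\Rightarrow(\mr{b})$ is immediate from $\mc{S}^n(\OO,\R_m)\subset\mc{S}^{n-1}(\OO,\R_m)$ and the monogenic analogue of Lemma~\ref{lem:reg-n}; and in $(\mr{b})\Rightarrow(\mr{a})$ the claim that $\frac{\partial f}{\partial x^c}\big|_{\OO_*}\in\mscr{C}^{n-1}(\OO_*,\R_m)$ does require passing through formula \eqref{eq:stem} to upgrade $F|_{D\setminus\R}$ to class $\mscr{C}^n$ (as in Remark~\ref{rem:reg-n}), which you gesture at but should make explicit.
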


\begin{corollary}
For each $n\in\N^*$, it holds:
\[
\mc{SM}^n(\OO,\R_m)=\{f\in\mc{S}^n(\OO,\R_m):f_*\in\ker(\thetabarm^{\,n})\}\subset\mscr{C}^\omega(\OO,\R_m).
\]
\end{corollary}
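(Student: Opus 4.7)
The plan is to derive this corollary directly from Theorem \ref{mainmono} above and from the monogenic counterpart of Corollary \ref{cor:GP2013} (the corollary stated immediately after Theorem \ref{thm:GP2013mono}). Since essentially all of the content is carried by Theorem \ref{mainmono}, the proof of the corollary reduces to two short observations.

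First, the equality $\mc{SM}^n(\OO,\R_m)=\{f\in\mc{S}^n(\OO,\R_m):f_*\in\ker(\thetabarm^{\,n})\}$ is nothing other than the equivalence $(\mr{a})\Leftrightarrow(\mr{c}')$ in Theorem \ref{mainmono}; no additional argument is required beyond unpacking the statements.

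For the inclusion $\mc{SM}^n(\OO,\R_m)\subset\mscr{C}^\omega(\OO,\R_m)$, I would take any $f\in\mc{SM}^n(\OO,\R_m)$ and invoke Definition \ref{def:sl-polymonogenic-global} to write $f(x)=\sum_{h=0}^{n-1}\overline{x}^hf_h(x)$ for some $f_0,\ldots,f_{n-1}\in\mc{SM}(\OO,\R_m)$. The monogenic analogue of Corollary \ref{cor:GP2013}, stated just after Theorem \ref{thm:GP2013mono}, gives $\mc{SM}(\OO,\R_m)\subset\mscr{C}^\omega(\OO,\R_m)$, so each $f_h$ is real analytic on $\OO$. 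Since the map $x\mapsto\overline{x}$ is a polynomial in the real coordinates of $\R^{m+1}\subset\R_m$, hence real analytic, and real analyticity is preserved under sums and products of $\R_m$-valued functions, the sum $f=\sum_{h=0}^{n-1}\overline{x}^hf_h$ is real analytic on $\OO$, i.e., $f\in\mscr{C}^\omega(\OO,\R_m)$.

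The only genuine obstacle is therefore buried in Theorem \ref{mainmono} itself; once that theorem is in hand, the corollary follows in a line or two. I expect the hard parts of the theorem to be the implication $(\mr{c}'')\Rightarrow(\mr{a})$, which would be obtained by applying the classical polyanalytic decomposition (Theorem \ref{thm:classical}) to the $\R_m\otimes_\R\C$-valued stem function of $f$ and then verifying, via uniqueness of the decomposition together with the stem property $F(\overline{z})=\overline{F(z)}$, that each holomorphic coefficient inherits the stem-function property (so that its induced slice function is slice monogenic); and the implication $(\mr{b})\Rightarrow(\mr{c}'')$, which requires a removable-singularity argument across the real line $D\cap\R$ to upgrade the regularity of the stem function from $\mscr{C}^{n-1}$ on $D$ plus $\mscr{C}^n$ on $D\setminus\R$ to a full polyanalytic (hence real analytic) decomposition on all of $D$.
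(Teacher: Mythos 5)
Your proposal is correct and takes essentially the same route as the paper: the equality is read off from equivalence $(\mathrm{a})\Leftrightarrow(\mathrm{c}')$ of Theorem \ref{mainmono}, and the inclusion in $\mscr{C}^\omega(\OO,\R_m)$ is obtained exactly as in the monogenic analogue of Lemma \ref{lem:r-a-spn}, by decomposing $f=\sum_{h=0}^{n-1}\overline{x}^hf_h$ with each $f_h$ slice monogenic and hence real analytic. The speculation about the internal structure of Theorem \ref{mainmono} is not needed for the corollary itself.
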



\section{Proofs}\label{sec:proofs}

We give the proofs of the quaternionic results only. The proofs in the monogenic case are similar.

We begin with a remark.

\begin{remark}
Let $E$ be a non-empty open subset of $\C$ invariant under complex conjugation and let $\OO_E$ be the circularization of $E$ in $\HH$; for instance, $E=D$ and $\OO_E=\OO_D=\OO$ or $E=D\setminus\R$ and $\OO_E=\OO_{D\setminus\R}=\OO_*$. Note that $\OO_E$ is open in $\HH$; indeed, the function $\zeta:\HH\to\C$ defined by $\zeta(x):=\mr{Re}(x)+i|\mr{Im}(x)|$ is continuous and $\OO_E=\zeta^{-1}(E)$. Consider a function $f:\OO_E\to\HH$. Let us extend to such a function $f$ the definitions of slice-by-slice continuous function and of slice-by-slice $\mscr{C}^n$ function for $n\in\N^*$ in the natural way.

We say that $f:\OO_E\to\HH$ is \emph{slice-by-slice continuous} if, for all $I\in\sq_\HH$, the restriction $f_I$ of $f$ to $\OO_{E,I}:=\OO_E\cap\C_I$ is continuous. We denote by $\mscr{C}^0_\sbs(\OO_E,\HH)$ the set of slice-by-slice continuous functions from $\OO_E$ to $\HH$. Given any $n\in\N^*$, we say that $f:\OO_E\to\HH$ is a \emph{slice-by-slice $\mscr{C}^n$} function if, for all $I\in\sq_\HH$, the composition $f_I\circ\phi_I:E\to\HH$ is of class $\mscr{C}^n$, where $\phi_I:E\to\OO_{E,I}$ is the real analytic isomorphism given by $\phi_I(\alpha+i\beta):=\alpha+I\beta$ for all $\alpha,\beta\in\R$ with $\alpha+i\beta\in E$. We denote by $\mscr{C}^n_\sbs(\OO_E,\HH)$ the set of slice-by-slice $\mscr{C}^n$ functions from $\OO_E$ to $\HH$. Note that, if $f\in\mscr{C}^n_\sbs(\OO_E,\HH)$ and $I\in\sq_\HH$, then we can define $\debarIn f_I:\OO_{E,I}\to\HH$ by means of \eqref{def:debarIn} with $f_I$ in place of $g$. Furthermore, if $\OO_E\cap\R\neq\emptyset$, $u$ is a function in $\mscr{C}^n_\sbs(\OO_E,\HH)$ and $u_*:\OO_E\setminus\R\to\HH$ is the restriction of $u$ to $\OO_E\setminus\R$, then $\OO_E\setminus\R=\OO_{E\setminus\R}$, $u_*\in\mscr{C}^n_\sbs(\OO_E\setminus\R,\HH)$, $\OO_{E,I}\setminus\R$ is open in $\OO_{E,I}$ and hence $\big(\debarIn u_I\big)|_{\OO_{E,I}\setminus\R}=\debarIn (u_*)_I$. \bs
\end{remark}

\subsection{Proofs of Theorem \ref{thm:GP2013} and Corollary \ref{cor:GP2013}}

\begin{lemma}\label{lem:1}
Let $n\in\N^*$, let $f\in\mscr{C}^n(\OO_*,\HH)$ and let $I\in\sq_\HH$. Then $(\thetabar^nf)_I=\debarIn f_I$ on $\OO_I\setminus\R$ or, equivalently,
\begin{equation}\label{eq:bar}
\thetabar^nf(x)=\debarIn f_I(x)\qquad\forall x\in\OO_I\setminus\R.
\end{equation}
\end{lemma}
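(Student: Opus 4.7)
The plan is induction on $n$, with all the real work concentrated in the $n=1$ case (which is stated without detailed proof in the introduction, right after the definition of $\thetabar$).

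For the base case $n=1$, I would verify the identity $\thetabar f(x) = \debarI f_I(x)$ for $f \in \mscr{C}^1(\OO_*,\HH)$ and $x \in \OO_I\setminus\R$ by a direct coordinate computation. Writing $x = \alpha + I\beta \in \OO_I\setminus\R$ with $\alpha = x_0 \in \R$ and $I = i_1 i + i_2 j + i_3 k \in \sq_\HH$, one has $x_h = i_h\beta$ for $h \in \{1,2,3\}$, so $\mr{Im}(x) = I\beta$ and $|\mr{Im}(x)|^2 = \beta^2$, giving $\mr{Im}(x)/|\mr{Im}(x)|^2 = I/\beta$. The vector $(x_1,x_2,x_3) = \beta(i_1,i_2,i_3)$ points exactly along the $\beta$-direction in $\C_I$, hence $\sum_{h=1}^{3} x_h \frac{\partial f}{\partial x_h}(x) = \beta \frac{\partial f_I}{\partial \beta}(x)$, while $\frac{\partial f}{\partial x_0}(x) = \frac{\partial f_I}{\partial \alpha}(x)$. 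Substituting into the definition of $\thetabar$ collapses it to $\frac{1}{2}\bigl(\frac{\partial f_I}{\partial \alpha} + I \frac{\partial f_I}{\partial \beta}\bigr)(x) = \debarI f_I(x)$.

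For the inductive step, assume the identity for $n-1$. Set $g := \thetabar^{n-1} f$, which belongs to $\mscr{C}^1(\OO_*,\HH)$ since $f \in \mscr{C}^n(\OO_*,\HH)$. Applying the base case to $g$ yields $\thetabar^n f(x) = \thetabar g(x) = \debarI g_I(x)$ for every $x \in \OO_I\setminus\R$. The inductive hypothesis gives $g_I(y) = \debarI^{\,n-1} f_I(y)$ for all $y \in \OO_I\setminus\R$, so $g_I$ and $\debarI^{\,n-1} f_I$ coincide as $\mscr{C}^1$ functions on the open subset $\OO_I\setminus\R$ of $\C_I$. Applying the first-order operator $\debarI$ to both sides produces the desired identity $\thetabar^n f(x) = \debarIn f_I(x)$.

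No step is really an obstacle. The only thing to track is that the regularity classes line up: since $f \in \mscr{C}^n(\OO_*,\HH)$, each iterate $\thetabar^k f$ lies in $\mscr{C}^{n-k}(\OO_*,\HH)$, and $f_I \circ \phi_I$ inherits $\mscr{C}^n$ regularity on the underlying open set of $\C$ via the real analytic parametrization $\phi_I$. Consequently $\thetabar^k f$ and $\debarI^{\,k} f_I$ are both well-defined for all $k \le n$, and every step of the induction is legitimate.
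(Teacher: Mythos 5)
Your proof is correct and follows essentially the same route as the paper: a direct coordinate computation for the base case $n=1$, followed by induction in which the base case is applied to $g=\thetabar^{\,n-1}f$. The one point where you diverge is that by writing $\mr{Im}(x)/|\mr{Im}(x)|^2=I/\beta$ and $\sum_{h}x_h\frac{\partial}{\partial x_h}=\beta\cdot(\text{derivative in direction }I)$ with the sign of $\beta$ kept explicit, the two factors of $\beta$ cancel regardless of sign, so you legitimately avoid the paper's separate treatment of the cases $b>0$ and $b<0$ (the latter handled there by passing to $-I$ and $\overline{z}$).
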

\begin{proof}
Let us proceed by induction on $n\in\N^*$.

First, we consider the case $n=1$. Actually, this case coincides with \cite[Theorem 2.2(i)]{global2013}. However in \cite{global2013}, for short, some details of the proof of Theorem 2.2(i) were omitted. Here we give the proof in detail. Write $I$ in coordinates: $I=i_1i+i_2j+i_3k$ with $i_1,i_2,i_3\in\R$. Define $\widehat{f}_I:D\setminus\R\to\HH=\R^4$ by $\widehat{f}_I(\alpha+i\beta):=f(\alpha+I\beta)$ for all $\alpha,\beta\in\R$ with $\alpha+i\beta\in D\setminus\R$. Consider a point $x=a+Ib\in\OO_I\setminus\R$ with $a,b\in\R$, and set $z:=a+ib\in D\setminus\R$.

First, {\it suppose $b>0$}. Note that $\frac{\partial\widehat{f}_I}{\partial\alpha}(z)$ and $\frac{\partial\widehat{f}_I}{\partial\beta}(z)$ equal the derivatives of $f$ at $x$ in the direction $1$ and $I$, respectively. Thus, we have
\begin{equation}\label{eq:direction1}
\frac{\partial\widehat{f}_I}{\partial\alpha}(z)=\frac{\partial f}{\partial x_0}(x)
\end{equation}
and
\begin{equation}\label{eq:directionI}
\frac{\partial\widehat{f}_I}{\partial\beta}(z)=\sum_{h=1}^3i_h\frac{\partial f}{\partial x_h}(x).
\end{equation}
Since $b>0$, it holds $|\mr{Im}(x)|=b$, $I=|\mr{Im}(x)|^{-1}\mr{Im}(x)$ and $i_h=|\mr{Im}(x)|^{-1}x_h$ for all $h\in\{1,2,3\}$, where $x=x_0+x_1i+x_2j+x_3k$ with $x_0,x_1,x_2,x_3\in\R$. Consequently, \eqref{eq:directionI}can be rewritten as
\begin{equation}\label{eq:directionI-bis}
\frac{\partial\widehat{f}_I}{\partial\beta}(z)=|\mr{Im}(x)|^{-1}\sum_{h=1}^3x_h\frac{\partial f}{\partial x_h}(x).
\end{equation}
Recall that, by definition, $\debarI f_I(x)=\frac{1}{2}\big(\frac{\partial\widehat{f}_I}{\partial\alpha}(z)+I\frac{\partial\widehat{f}_I}{\partial\beta}(z)\big)$. Combining the latter equality with \eqref{eq:direction1} and \eqref{eq:directionI-bis}, we obtain
\begin{equation*}\label{eq:beta>0}
\text{$\debarI f_I(x)=\thetabar f(x)$.}
\end{equation*}

{\it Suppose $b<0$}. Since $x=a+(-I)(-b)$ and $-b>0$, we can repeat the preceding argument (with $z$ replaced with its conjugate $\overline{z}$) to the function $f_{-I}:\OO_{-I}\setminus\R=\OO_I\setminus\R\to\HH$, obtaining
\begin{equation*}\label{eq:beta<0}
\text{$\debar_{-I} f_{-I}(x)=\thetabar f(x)$.}
\end{equation*}
Moreover, if $\widehat{f}_{-I}:D\setminus\R\to\HH$ is the function defined by $\widehat{f}_{-I}(\alpha+i\beta):=f(\alpha-I\beta)$ for all $\alpha,\beta\in\R$ with $\alpha+i\beta\in D$, then $\widehat{f}_{-I}(\alpha+i\beta)=\widehat{f}_I(\alpha-i\beta)$ so $\frac{\partial\widehat{f}_{-I}}{\partial\alpha}(\overline{z})=\frac{\partial\widehat{f}_I}{\partial\alpha}(z)$ and $\frac{\partial\widehat{f}_{-I}}{\partial\beta}(\overline{z})=-\frac{\partial\widehat{f}_I}{\partial\beta}(z)$. It follows that
\[
\thetabar f(x)=\debar_{-I} f_{-I}(x)=\frac{1}{2}\left(\frac{\partial\widehat{f}_{-I}}{\partial\alpha}(\overline{z})+(-I)\frac{\partial\widehat{f}_{-I}}{\partial\beta}(\overline{z})\right)=\frac{1}{2}\left(\frac{\partial\widehat{f}_I}{\partial\alpha}(z)+I\frac{\partial\widehat{f}_I}{\partial\beta}(z)\right)=\debarI f_I(x).
\]

This completes the proof of the case $n=1$, i.e., $(\thetabar f)_I=\debarI f_I$ for all $f\in\mscr{C}^1(\OO_*,\HH)$.

Let $n\geq2$ and let $f\in\mscr{C}^n(\OO_*,\HH)\subset\mscr{C}^{n-1}(\OO_*,\HH)$. By induction hypothesis, we have that $(\thetabar^{n-1}f)_I=\debarI^{\,n-1}f_I$. On the other hand, $\thetabar^{n-1}f\in\mscr{C}^1(\OO_*,\HH)$ so by the case $n=1$ we have
\[
(\thetabar^nf)_I=(\thetabar(\thetabar^{n-1}f))_I=\debarI(\thetabar^{n-1}f)_I=\debarI(\debarI^{\,n-1}f_I)=\debarI^{\,n}f_I.
\]
The proof is complete.
\end{proof}

\begin{proof}[Proof of Lemma \ref{lem:reg-n}]
Let $n\in\N$, let $f\in\mc{S}^n(\OO,\HH)$ and let $F=F_1+iF_2:D\to\HH\otimes_\R\C$ be the stem function of class $\mscr{C}^n$ inducing $f$ (by the term `of class $\mscr{C}^0$', we mean `continuous'). Given any $I\in\sq_\HH$, we have that $f_I\circ\phi_I=F_1+IF_2$ on $D$. Thus, $f_I\circ\phi_I$ is of class $\mscr{C}^n$ for all $I\in\sq_\HH$, i.e., $f\in\mscr{C}^n_\sbs(\OO,\HH)$. Define the real analytic functions $\zeta:\OO_*\to D$ and $\mscr{I}:\OO_*\to\HH$ by
\[
\zeta(x):=\mr{Re}(x)+i|\mr{Im}(x)|
\quad\text{and}\quad
\mscr{I}(x):=|\mr{Im}(x)|^{-1}\mr{Im}(x).
\]
Since $f_*(x)=(F_1\circ\zeta)(x)+\mscr{I}(x)(F_2\circ\zeta)(x)$ for all $x\in\OO_*$, it follows that $f_*\in\mscr{C}^n(\OO_*,\HH)$. 
\end{proof}

\begin{remark}\label{rem:reg-n}
The preceding proof ensures that, if $f=\I(F):\OO\to\HH$ is a slice function and $F|_{D\setminus\R}$ is of class~$\mscr{C}^n$ for some $n\in\N^*$, then $f_*\in\mscr{C}^n(\OO_*,\HH)$. \bs 
\end{remark}

\begin{lemma}\label{lem:de-n}
Let $n\in\N^*$ and let $f\in\mc{S}^n(\OO,\HH)$. Then $f\in\mscr{C}^n_\sbs(\OO_I,\HH)$ and, for each $I\in\sq_\HH$, it holds
\begin{equation}\label{eq:de-n}
\left(\frac{\partial}{\partial x^c}\right)^nf(x)=\debarIn f_I(x) \quad\text{for all $x\in\OO_I$}.
\end{equation} 
\end{lemma}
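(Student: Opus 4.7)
The first assertion $f\in\mscr{C}^n_\sbs(\OO,\HH)$ is immediate from Lemma \ref{lem:reg-n}, so all the work is in establishing the identity \eqref{eq:de-n}. The plan is to proceed by induction on $n$, with the base case handled by a direct computation on a single slice and the inductive step reduced to the base case via the observation that $\frac{\partial}{\partial x^c}$ preserves the class of slice functions while dropping the order of smoothness by one.

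For the base case $n=1$, write $F=F_1+\ui F_2$, so that by the very definition of slice function $f_I\circ\phi_I=F_1+IF_2$ on $D$. Expanding $\debarI(F_1+IF_2)=\frac{1}{2}\big(\frac{\partial}{\partial\alpha}+I\frac{\partial}{\partial\beta}\big)(F_1+IF_2)$ and using $I^2=-1$ yields
\[
\debarI(F_1+IF_2)=\tfrac{1}{2}\Big(\tfrac{\partial F_1}{\partial\alpha}-\tfrac{\partial F_2}{\partial\beta}\Big)+I\cdot\tfrac{1}{2}\Big(\tfrac{\partial F_1}{\partial\beta}+\tfrac{\partial F_2}{\partial\alpha}\Big),
\]
which is precisely $G_1+IG_2$ where $G_1,G_2$ are the real/imaginary components of $\frac{\partial F}{\partial\overline{z}}$ given by \eqref{def:debarF}. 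On the other hand, one checks by a quick symmetry computation that $\frac{\partial F}{\partial\overline{z}}$ satisfies the even-odd conditions \eqref{eq:stem1}, hence is itself a stem function; therefore $\big(\frac{\partial}{\partial x^c}\big)f=\I\big(\frac{\partial F}{\partial\overline{z}}\big)$ restricted to $\OO_I$ and precomposed with $\phi_I$ equals $G_1+IG_2$. Comparing the two expressions gives $\big(\frac{\partial}{\partial x^c}\big)f(x)=\debarI f_I(x)$ for every $x\in\OO_I$, i.e.\ the case $n=1$.

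For the inductive step, suppose the claim is proved for $n-1\geq 1$ and let $f=\I(F)\in\mc{S}^n(\OO,\HH)$. Since $F$ is of class $\mscr{C}^n$, the function $\frac{\partial F}{\partial\overline{z}}$ is a stem function of class $\mscr{C}^{n-1}$, so $\frac{\partial f}{\partial x^c}=\I\big(\frac{\partial F}{\partial\overline{z}}\big)\in\mc{S}^{n-1}(\OO,\HH)$. The induction hypothesis applied to $\frac{\partial f}{\partial x^c}$ and the exponent $n-1$ gives
\[
\Big(\tfrac{\partial}{\partial x^c}\Big)^{n-1}\Big(\tfrac{\partial f}{\partial x^c}\Big)(x)=\debarI^{\,n-1}\Big(\tfrac{\partial f}{\partial x^c}\Big)_I(x)\quad\text{for all $x\in\OO_I$.}
\]
By the base case applied to $f$ one has $\big(\frac{\partial f}{\partial x^c}\big)_I=\debarI f_I$ on $\OO_I$, and by iterated definition $\big(\frac{\partial}{\partial x^c}\big)^{n-1}\big(\frac{\partial f}{\partial x^c}\big)=\big(\frac{\partial}{\partial x^c}\big)^n f$. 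Combining these identities yields \eqref{eq:de-n}.

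The only non-routine point is the base case: one must verify both that $\frac{\partial F}{\partial\overline{z}}$ is actually a stem function (so that $\I\big(\frac{\partial F}{\partial\overline{z}}\big)$ makes sense) and that the pointwise formula for $\debarI$ acting on $F_1+IF_2$ agrees with the slice function induced by $\frac{\partial F}{\partial\overline{z}}$. Both facts come out of the same short calculation, using the real linearity of $\partial/\partial\alpha,\partial/\partial\beta$ together with $I^2=-1$, and neither step requires any hypothesis beyond $F\in\mscr{C}^1$; the inductive passage to higher $n$ is then purely formal.
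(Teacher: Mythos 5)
Your proposal is correct and follows essentially the same route as the paper: the base case is the same direct comparison of $\debarI(F_1+IF_2)$ with the components of $\frac{\partial F}{\partial\overline{z}}$ from \eqref{def:debarF}, and the induction differs only in that you peel off the innermost factor $\frac{\partial}{\partial x^c}$ and apply the hypothesis to $\frac{\partial f}{\partial x^c}\in\mc{S}^{n-1}(\OO,\HH)$, whereas the paper applies it to $f$ and then hits $\big(\frac{\partial}{\partial x^c}\big)^{n-1}f\in\mc{S}^1(\OO,\HH)$ with the base case; the two orderings are interchangeable since the operators commute on $\mscr{C}^n$ functions.
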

\begin{proof}
By Lemma \ref{lem:reg-n}, we know that $f\in\mscr{C}^n_\sbs(\OO_*,\HH)$. Let $F=F_1+iF_2:D\to\HH\otimes_\R\C$ be the stem function of class $\mscr{C}^n$ inducing $f$. Recall that, given any $I\in\sq_\HH$, if $z=\alpha+i\beta\in D$ with $\alpha,\beta\in\R$ and $z_I:=\alpha+I\beta\in\OO_I$, then $f_I(z_I)=F_1(z)+IF_2(z)$.

Let us prove \eqref{eq:de-n} by induction on $n\in\N^*$. Bearing in mind~\eqref{def:debarF}, we have:
\begin{align*}
\debarI f_I(z_I)&=\textstyle\frac{1}{2}\left(\frac{\partial (F_1+IF_2)}{\partial\alpha}(z)+I\frac{\partial (F_1+IF_2)}{\partial\beta}(z)\right)=\\
&\textstyle=\frac{1}{2}\left(\frac{\partial F_1}{\partial\alpha}(z)-\frac{\partial F_2}{\partial\beta}(z)+I\left(\frac{\partial F_1}{\partial\beta}(z)+\frac{\partial F_2}{\partial\alpha}(z)\right)\right)=\frac{\partial f}{\partial x^c}(z_I),
\end{align*}
i.e., $\big(\big(\frac{\partial}{\partial x^c}\big)f\big)_I=\debarI f_I$ on $\OO_I$. This proves the case $n=1$. Let $n\geq2$. By induction hypothesis, we can assume that $\big(\big(\frac{\partial}{\partial x^c}\big)^{n-1}f\big)_I=\debarI^{\,n-1} f_I$ on $\OO_I$. Since $\big(\frac{\partial}{\partial x^c}\big)^{n-1}f\in\mc{S}^1(\OO,\HH)$, thanks to the case $n=1$, we have:
\[\textstyle
\left(\left(\frac{\partial}{\partial x^c}\right)^nf\right)_I=\left(\left(\frac{\partial}{\partial x^c}\right)\left(\left(\frac{\partial}{\partial x^c}\right)^{n-1}f\right)\right)_I=\debarI\left(\left(\frac{\partial}{\partial x^c}\right)^{n-1}f\right)_I=\debarI\left(\debarI^{\,n-1} f_I\right)=\debarIn f_I.
\]
This completes the proof.
\end{proof}

In the case $n=1$, a version of the latter result is contained in \cite[Remark 1.7]{global2013}.

\begin{lemma}\label{lem:real-analytic}
Let $f:\OO\to\HH$ be a slice regular function in the sense of \cite[Definition 8]{AIM2011}, i.e., $f\in\mc{S}^1(\OO,\HH)$ and $\frac{\partial f}{\partial x^c}=0$ on $\OO$. Then $f\in\mscr{C}^\omega(\OO,\HH)$.
\end{lemma}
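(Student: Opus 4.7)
\medskip

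\noindent\textit{Proof plan.} Let $F=F_1+\ui F_2:D\to\HH\otimes_\R\C$ be the $\mscr{C}^1$ stem function with $f=\I(F)$. Since $\frac{\partial f}{\partial x^c}=\I\big(\frac{\partial F}{\partial\overline{z}}\big)=0$ on $\OO$ and $\I$ is injective on stem functions, the definition \eqref{def:debarF} of $\frac{\partial F}{\partial\overline{z}}$ forces
\[
\frac{\partial F_1}{\partial\alpha}=\frac{\partial F_2}{\partial\beta}
\qquad\text{and}\qquad
\frac{\partial F_1}{\partial\beta}=-\frac{\partial F_2}{\partial\alpha}
\qquad\text{on }D.
\]
Thus each real component of $F_1$ and $F_2$ is a $\mscr{C}^1$ solution of the scalar Cauchy--Riemann equations; by classical complex analysis, $F_1$ and $F_2$ are holomorphic $\HH$-valued functions of $z\in D$, and in particular $F_1,F_2\in\mscr{C}^\omega(D,\HH)$.

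\medskip

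The plan is now to verify real analyticity of $f$ separately on $\OO_*$ and at each real point of $\OO$. On $\OO_*$, by the argument recalled in the proof of Lemma~\ref{lem:reg-n}, one has
\[
f_*(x)=F_1(\zeta(x))+\mscr{I}(x)F_2(\zeta(x)),
\qquad
\zeta(x):=\mr{Re}(x)+i|\mr{Im}(x)|,\quad\mscr{I}(x):=|\mr{Im}(x)|^{-1}\mr{Im}(x),
\]
where $\zeta$ and $\mscr{I}$ are real analytic on $\OO_*$. Composing and multiplying real analytic functions gives $f_*\in\mscr{C}^\omega(\OO_*,\HH)$.

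\medskip

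The substantive step is real analyticity at a real point $x_0\in\OO\cap\R$. Choose $r>0$ with the complex disk $B_r(x_0)\subset D$; since $\OO=\OO_D$ is circular, the quaternionic ball $\{x\in\HH:|x-x_0|<r\}$ lies in $\OO$. Being holomorphic, $F$ expands on $B_r(x_0)$ as $F(z)=\sum_{h\in\N}(z-x_0)^hc_h$ with $c_h\in\HH\otimes_\R\C$. I will exploit the stem symmetries \eqref{eq:stem1}, which can be rewritten as $F(\overline{z})=\overline{F(z)}^{\ui}$ (conjugation with respect to $\ui$ in $\HH\otimes_\R\C$). Since $x_0\in\R$, $(z-x_0)^h$ has real coefficients in $\ui$, so comparing
\[
\sum_{h\in\N}(\overline{z}-x_0)^hc_h=F(\overline{z})=\overline{F(z)}^{\ui}=\sum_{h\in\N}(\overline{z}-x_0)^h\overline{c_h}^{\ui}
\]
and invoking uniqueness of Taylor coefficients yields $c_h=\overline{c_h}^{\,\ui}$, i.e.\ $c_h\in\HH$ for all $h$. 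Setting $a_h:=c_h\in\HH$, for every $x=\alpha+I\beta$ with $(\alpha-x_0)^2+\beta^2<r^2$ and every $I\in\sq_\HH$, writing $(\alpha+i\beta-x_0)^h=P_h(\alpha-x_0,\beta)+iQ_h(\alpha-x_0,\beta)$ with $P_h,Q_h\in\R[\,\cdot\,,\,\cdot\,]$ we get
\[
f(x)=\sum_{h\in\N}\bigl(P_h(\alpha-x_0,\beta)+IQ_h(\alpha-x_0,\beta)\bigr)a_h=\sum_{h\in\N}(x-x_0)^ha_h,
\]
and the right-hand side is a normally convergent power series in the real coordinates of $x-x_0$, hence real analytic near $x_0$. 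Combining this with the analyticity on $\OO_*$, we conclude $f\in\mscr{C}^\omega(\OO,\HH)$.

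\medskip

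\noindent\textit{Expected obstacle.} The only delicate point is the transition through the real axis, where the formula $f_*(x)=F_1(\zeta(x))+\mscr{I}(x)F_2(\zeta(x))$ breaks down because $\zeta$ and $\mscr{I}$ are not even continuous at real points; the stem symmetries and the real-analyticity of $F$ must be combined carefully to produce the quaternionic power series with coefficients in $\HH$ (rather than in $\HH\otimes_\R\C$), which is what furnishes real analyticity at $x_0$.
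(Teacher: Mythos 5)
Your proposal is correct and follows essentially the same route as the paper: the decisive step in both is to deduce from $\I\big(\frac{\partial F}{\partial\overline{z}}\big)=0$ and the injectivity of $\I$ on stem functions (i.e., formula \eqref{eq:stem}) that $\frac{\partial F}{\partial\overline{z}}=0$ on $D$, so that $F$ is holomorphic and hence real analytic. The remainder of your argument — analyticity of $f_*$ on $\OO_*$ via $\zeta$ and $\mscr{I}$, and the power-series expansion with coefficients forced into $\HH$ by the stem symmetries at real points — is a correct, self-contained proof of the implication that the paper simply outsources to \cite[Proposition 7(3)]{AIM2011}, namely that a slice function induced by a real analytic stem function is itself real analytic.
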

\begin{proof}
Since $\frac{\partial f}{\partial x^c}=\I(\frac{\partial F}{\partial\overline{z}})=0$ on $\OO$, equations \eqref{eq:stem} (applied with a fixed $I\in\sq_\HH$) imply that $\frac{\partial F}{\partial\overline{z}}=0$ on $D$, i.e., $F$ is holomorphic. In particular, $F$ is real analytic. By \cite[Proposition 7(3)]{AIM2011}, it follows that $f\in\mscr{C}^\omega(\OO,\HH)$.
\end{proof}

\begin{proof}[Proof of Theorem \ref{thm:GP2013}]
Implication $(\mr{a})\Rightarrow(\mr{c})$ follows immediately from  \cite[Theorem 2.2(i)]{global2013} or above Lemma \ref{lem:1} with $n=1$. Implication $(\mr{c})\Rightarrow(\mr{b})$ is evident. Equivalence $(\mr{b})\Leftrightarrow(\mr{e})$ coincides with \cite[Theorem 2.4]{global2013}. Assertion \eqref{eq:thetabarG} implies at once equivalence $(\mr{c})\Leftrightarrow(\mr{d})$. It remains to show implication $(\mr{e})\Rightarrow(\mr{a})$. Let $f:\OO\to\HH$ be a function satisfying $(\mr{e})$, i.e., $f$ is a slice regular function in the sense of \cite[Definition 8]{AIM2011}. By Lemma \ref{lem:real-analytic}, we know that $f\in\mscr{C}^\omega(\OO,\HH)\subset\mscr{C}^1(\OO,\HH)$. Now Lemma \ref{lem:de-n} (with $n=1$) ensures that $\debarI f_I=0$ on $\OO_I$ for all $I\in\sq_\HH$. This proves that $f\in\mc{SR}(\OO,\HH)$.
\end{proof}

\begin{proof}[Proof of Corollary \ref{cor:GP2013}]
Equivalences $(\mr{a})\Leftrightarrow(\mr{c})\Leftrightarrow(\mr{d})$ of Theorem \ref{thm:GP2013} immediately imply that $\mc{SR}(\OO,\HH)=\{f\in\mscr{C}^1(\OO,\HH):f_*\in\ker(\thetabar)\}=\ker(G)$. The inclusion $\mc{SR}(\OO,\HH)\subset\mscr{C}^\omega(\OO,\HH)$ was proved in Lemma \ref{lem:real-analytic}.
\end{proof}

\begin{lemma}\label{lem:r-a-spn}
$\mc{SP}^n(\OO,\HH)\subset\mscr{C}^\omega(\OO,\HH)$ for all $n\in\N^*$.
\end{lemma}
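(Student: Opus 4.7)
The plan is to unpack the definition of $\mc{SP}^n(\OO,\HH)$ and combine it with Corollary \ref{cor:GP2013}. By Definition \ref{def:sl-poly-global}, any $f \in \mc{SP}^n(\OO,\HH)$ can be written as $f(x) = \sum_{h=0}^{n-1} \overline{x}^h f_h(x)$ for some $f_0,\ldots,f_{n-1} \in \mc{SR}(\OO,\HH)$. Corollary \ref{cor:GP2013} already gives us $\mc{SR}(\OO,\HH) \subset \mscr{C}^\omega(\OO,\HH)$, so each $f_h$ is real analytic on $\OO$.

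Next I would observe that the quaternionic conjugation map $\HH \to \HH$, $x \mapsto \overline{x}$, is $\R$-linear when $\HH$ is identified with $\R^4$ (since $\overline{x_0 + x_1 i + x_2 j + x_3 k} = x_0 - x_1 i - x_2 j - x_3 k$), hence real analytic. Therefore, for each $h \in \{0,\ldots,n-1\}$, the function $x \mapsto \overline{x}^h$ is a polynomial with quaternionic coefficients in the real variables $x_0,x_1,x_2,x_3$, so it belongs to $\mscr{C}^\omega(\OO,\HH)$.

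Finally, I would invoke the standard fact that the pointwise product of two real analytic $\HH$-valued functions is real analytic (this follows from the real analyticity of the multiplication map $\HH \times \HH \to \HH$, which is $\R$-bilinear) and that finite sums of real analytic functions are real analytic. Applying this to each summand $\overline{x}^h f_h(x)$ yields $f \in \mscr{C}^\omega(\OO,\HH)$, which completes the argument.

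There is no real obstacle here: the statement is essentially a formal consequence of Corollary \ref{cor:GP2013} together with elementary closure properties of real analytic functions. The only small point worth spelling out in the actual write-up is why $x \mapsto \overline{x}$ is real analytic, which is immediate from its $\R$-linearity.
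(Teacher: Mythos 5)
Your proof is correct and follows essentially the same route as the paper: decompose $f=\sum_{h=0}^{n-1}\overline{x}^hf_h$ with each $f_h$ slice regular, deduce that each $f_h$ is real analytic (the paper does this via implication $(\mr{a})\Rightarrow(\mr{e})$ of Theorem \ref{thm:GP2013} and Lemma \ref{lem:real-analytic}, which is exactly the content of the inclusion in Corollary \ref{cor:GP2013} you cite), and conclude by the closure of $\mscr{C}^\omega$ under products and sums. The extra detail you supply about $x\mapsto\overline{x}^h$ being a quaternionic-coefficient polynomial in $x_0,\ldots,x_3$ is precisely what the paper leaves implicit in its phrase ``as an immediate consequence.''
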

\begin{proof}
Let $n\in\N^*$ and let $f=\sum_{h=0}^{n-1}\overline{x}^hf_h\in\mc{SP}^n(\OO,\HH)$ for some $f_0,\ldots,f_{n-1}\in\mc{SR}(\OO,\HH)$. By implication $(\mr{a})\Rightarrow(\mr{e})$ of Theorem \ref{thm:GP2013}, we know that each $f_h\in\mc{S}^1(\OO,\HH)$ and $\frac{\partial f_h}{\partial x^c}=0$ on $\OO$. Thus, Lemma \ref{lem:real-analytic} ensures that each $f_h$ belongs to $\mscr{C}^\omega(\OO,\HH)$. As an immediate consequence, $f$ belongs to $\mscr{C}^\omega(\OO,\HH)$ as well.
\end{proof}


\subsection{Proof of Theorem \ref{thm:GP2013-sbs}}

\begin{lemma}\label{lem:sbs}
Let $f:\OO\to\HH$ be a function such that
\begin{equation}\label{eq:q}
\text{$f\in\mscr{C}^0_{\mr{sl}}(\OO,\HH)$, $f_*\in\mscr{C}^1_\sbs(\OO_*,\HH)$ and $\debarI f_I=0$ on $\OO_I\setminus\R$ for all $I\in\sq_\HH$.}
\end{equation}
Then $f$ is a slice regular function in the sense of \cite[Definition 8]{AIM2011}, i.e., $f\in\mc{S}^1(\OO,\HH)$ and $\frac{\partial f}{\partial x^c}=0$ on $\OO$.
\end{lemma}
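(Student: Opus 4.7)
The plan is to first upgrade each $f_I$ to be holomorphic on all of $\OO_I$, then to produce a circular slice-regular neighborhood of $\OO\cap\R$, then to build a global stem function from the $i$-slice alone, and finally to verify it induces $f$ everywhere via the identity principle for real analytic functions. For the upgrade I would argue as follows: $f_I$ is continuous on $\OO_I$ and satisfies $\debarI f_I=0$ on $\OO_I\setminus\R$, so $f_I\circ\phi_I:D\to\HH$ is continuous on $D$ and ``$I$-holomorphic'' off the line $D\cap\R$; writing $\HH\cong\C_I\oplus\C_I J$ for a fixed $J\perp I$ this reduces to two ordinary $\C$-valued Cauchy-Riemann systems on $D\setminus\R$, and the classical Morera / Painlev\'e removable-singularity theorem for a continuous function holomorphic off a smooth line extends holomorphicity across $D\cap\R$. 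Hence $f\in\mscr{C}^1_\sbs(\OO,\HH)$ with $\debarI f_I=0$ on all of $\OO_I$ for every $I\in\sq_\HH$. Under this strengthened hypothesis the Gentili-Struppa ``independence-from-$I$'' argument recalled in the introduction then applies: at $x_0\in\OO\cap\R$ the coefficients $\tfrac{1}{h!}\partial_\alpha^h f_I(x_0)$ all coincide with the intrinsic $h$-th iterated real derivative of $f$ at $x_0$, and the common quaternionic power series $\sum(x-x_0)^h a_h$ gives a slice regular representative of $f$ on a ball around each such $x_0$. Unioning these balls produces a circular open neighborhood $U=\OO_{D_U}$ of $\OO\cap\R$ in $\OO$ on which $f$ is slice regular; write $f|_U=\I(F^U)$ for the corresponding stem function $F^U$ on the open set $D_U\supset D\cap\R$.

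Next I would define $F_1,F_2:D\to\HH$ by
\begin{equation*}
F_1(z):=\tfrac{1}{2}\bigl(f(\phi_i(z))+f(\phi_i(\bar z))\bigr),\qquad F_2(z):=-\tfrac{i}{2}\bigl(f(\phi_i(z))-f(\phi_i(\bar z))\bigr).
\end{equation*}
Because $f_i$ is holomorphic on $\OO_i$, both $F_1$ and $F_2$ are real analytic on $D$; the even/odd conditions \eqref{eq:stem1} hold by inspection; and a direct Cauchy-Riemann computation using $\partial_\alpha(f_i\circ\phi_i)=-i\,\partial_\beta(f_i\circ\phi_i)$ yields $\partial_\alpha F_1-\partial_\beta F_2=0$ and $\partial_\beta F_1+\partial_\alpha F_2=0$ on $D$, so that $F:=F_1+\ui F_2$ is a stem function satisfying $\partial F/\partial\bar z=0$ on $D$. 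Applying the uniqueness formula \eqref{eq:stem} to $f|_U$ with $J=i$ gives $F|_{D_U}=F^U$, so $f(\phi_I(z))=F_1(z)+I F_2(z)$ holds for every $I\in\sq_\HH$ and every $z\in D_U$.

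The final step is to extend this identity from $D_U$ to all of $D$. For each fixed $I\in\sq_\HH$, set $g_I(z):=f(\phi_I(z))-F_1(z)-I F_2(z)$ for $z\in D$. By the upgrade in the first step, $f_I$ is holomorphic on $\OO_I$, so $f\circ\phi_I$ is real analytic on $D$; combined with the real analyticity of $F_1,F_2$, this makes $g_I$ real analytic on the connected open set $D$. Since $g_I$ vanishes on the nonempty open subset $D_U$ of $D$, the identity principle for real analytic functions forces $g_I\equiv 0$ on $D$. This gives $f=\I(F)\in\mc{S}^1(\OO,\HH)$, and combined with $\partial F/\partial\bar z=0$ from the previous step it yields $\partial f/\partial x^c=\I(\partial F/\partial\bar z)=0$ on $\OO$, completing the proof. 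The hard part is this final identity-principle extension: it depends crucially on both the real analyticity on all of $D$ secured by the removable-singularity upgrade and on having a nonempty open zero set for $g_I$, which is supplied by the circular slice-regular neighborhood $U$.
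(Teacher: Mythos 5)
Your argument is correct and follows essentially the same route as the paper's proof: a Morera-type removable-singularity step to make each $f_I$ holomorphic across $\OO_I\cap\R$, the Gentili--Struppa independence-from-$I$ power-series expansion at real points, and the identity principle for real analytic functions on the connected set $D$ to propagate the representation $f\circ\phi_I=F_1+IF_2$ from a neighborhood of the real points to all of $D$. The only cosmetic difference is that the paper establishes the representation formula by directly expanding the auxiliary function $g_{K,H}$ in a power series on a single ball, whereas you fix the slice $\C_i$, verify the Cauchy--Riemann equations for $F=F_1+\ui F_2$ explicitly, and then apply the identity principle to $g_I$ for each $I$ separately; both versions are sound.
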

\begin{proof}
We adapt to the present situation the proof of implication $(\mr{ii})\Rightarrow(\mr{i})$ of \cite[Theorem 2.4]{global2013}, which is a generalized version of the Gentili-Struppa `independence from $I$' argument. 

Up to a translation of $\OO$ in $\HH$ along a suitable real number, we can assume that $0\in\OO$. Let $\rho>0$ be such that the open ball $B_\rho$ of $\HH$ centered at $0$ with radius $\rho$ is contained in $\OO$. Since $\OO$ is the circularization of $D\subset\C$ in $\HH$, the open ball $\mathsf{B}_\rho$ of $\C$ centered at $0$ of radius $\rho$ is contained in $D$. For each $I\in\sq_\HH$, we define $B_{\rho,I}:=B_\rho\cap\C_I$.

Let $I,J\in\sq_\HH$ be such that $I\perp J$, and let $g_1,g_2:\OO_I\to\C_I$ be the unique functions such that $f_I=g_1+g_2J$. Thanks to \eqref{eq:q}, $g_1$ and $g_2$ are continuous, their restrictions to $\OO_I\setminus\R$ are of class~$\mscr{C}^1$ and
\[
0=\debarI f_I=\debarI g_1+(\debarI g_2)J \quad \text{on $\OO_I\setminus\R$.}
\]
Thus, the functions $g_1,g_2:\OO_I\to\C_I$ are continuous on the whole $\OO_I$ and holomorphic on $\OO_I\setminus\R$ w.r.t. the complex structure on $\C_I$ induced by the (left) multiplication by $I$. Thanks to Morera's theorem, $g_1$ and $g_2$ turns out to be  holomorphic on the whole $\OO_I$; in particular, $g_1$ and $g_2$ are real analytic. For each $l\in\{1,2\}$, we can expand $g_l$ as follows: $g_l(z_I)=\sum_{h\in\N}z_I^h\frac{1}{h!}\frac{\partial^hg_l}{\partial\alpha^h}(0)$ for all $z_I=\alpha+I\beta\in B_{p,I}$ with $\alpha,\beta\in\R$, where $z_I^h:=(z_I)^h$. Consequently, for each $I\in\sq_\HH$, $f_I=g_1+g_2J$ is real analytic and
\[
f_I(z_I)=g_1(z_I)+g_2(z_I)J=\sum_{h\in\N}z_I^h\frac{1}{h!}\left(\frac{\partial^hg_1}{\partial\alpha^h}(0)+\frac{\partial^hg_2}{\partial\alpha^h}(0)J\right)=\sum_{h\in\N}z_I^h\frac{1}{h!}\frac{\partial^hf}{\partial\alpha^h}(0)
\]
for all $z_I\in B_{p,I}$. Set $a_h:=\frac{1}{h!}\frac{\partial^hf}{\partial\alpha^h}(0)$ for all $n\in\N$. Note that the $a_h$'s do not depend on $I$.

Given $K,H\in\sq_\HH$, we define the real analytic function $g_{K,H}:D\to\HH$ by
\[
g_{K,H}(z):=f_K(z_K)-\frac{1}{2}\big(f_H(z_H)+f_H(\overline{z_H})\big)+K\frac{H}{2}\big(f_H(z_H)-f_H(\overline{z_H})\big)
\]
for all $z=\alpha+i\beta\in\mathsf{B}_\rho$, where $z_K=\alpha+K\beta$ and $z_H=\alpha+H\beta$. For each $h\in\N$, let $p_h,q_h$ be the polynomials in $\R[\alpha,\beta]$ such that $z^h=p_h(\alpha,\beta)+iq_h(\alpha,\beta)$ for all $z=\alpha+i\beta\in\C$. Note that, for all $z=\alpha+i\beta\in\mathsf{B}_\rho$, it holds:
\begin{align*}
z_H^h+\overline{z_H^h}=2p_h(\alpha,\beta), \qquad z_H^h-\overline{z_H^h}=2Hq_h(\alpha,\beta)
\end{align*}
and hence
\begin{align*}
g_{K,H}(z)&=\sum_{h\in\N}\left(z_K^ha_h-\frac{1}{2}\big(z_H^ha_h+\overline{z_H}^ha_h\big)+K\frac{H}{2}\big(z_H^ha_h-\overline{z_H}^ha_h\big)\right)=\\
&=\sum_{h\in\N}\left(z_K^h-\frac{1}{2}\big(z_H^h+\overline{z_H^h}\big)+K\frac{H}{2}\big(z_H^h-\overline{z_H^h}\big)\right)a_h=\\
&=\sum_{h\in\N}\left(p_h(\alpha,\beta)+Kq_h(\alpha,\beta)-p_h(\alpha,\beta)+K\big(H(Hq_h(\alpha,\beta))\big)\right)a_h=0.
\end{align*}
Since $D$ is connected and $g_{K,H}:D\to\HH$ is a real analytic function vanishing on $\mathsf{B}_\rho\subset D$, the principle of analytic continuation implies that $g_{K,H}$ vanishes on the whole $D$. This proves that $f:\OO\to\HH$ satisfies the representation formula, i.e.,
\begin{equation}\label{eq:repres-formula}
f_K(z_K)=\frac{1}{2}\big(f_H(z_H)+f_H(\overline{z_H})\big)-K\frac{H}{2}\big(f_H(z_H)-f_H(\overline{z_H})\big)
\end{equation}
for all $K,H\in\sq_\HH$ and for all $z\in D$. Fix $H\in\sq_\HH$ and define $F_1,F_2:D\to\HH$ by
\[
F_1(z):=\frac{1}{2}\big(f_H(z_H)+f_H(\overline{z_H})\big)
\quad\text{ and }\quad
F_2(z):=-\frac{H}{2}\big(f_H(z_H)-f_H(\overline{z_H})\big)
\]
for all $z=\alpha+i\beta\in D$. Since $F_1$ is even and $F_2$ is odd w.r.t. $\beta$, the function $F:=F_1+iF_2:D\to\HH\otimes_\R\C$ is a stem function. Moreover, it is of class $\mscr{C}^1$ ($\mscr{C}^\omega$ indeed). Thanks to \eqref{eq:repres-formula}, $f$ coincides with $\I(F)$; in particular, $f\in\mc{S}^1(\OO,\HH)$. Since $\debarI f_I=0$ on the whole $\OO_I$ for all $I\in\sq_\HH$, Lemma \ref{lem:de-n} with $n=1$ (or \cite[Proposition 8]{AIM2011}) ensures that $\frac{\partial f}{\partial x^c}=0$ on $\OO$. 
\end{proof}

\begin{corollary}\label{cor:sbs}
Let $f:\OO\to\HH$ be a function. The following assertion hold.
\begin{itemize}
 \item[$(\mr{i})$] If $f\in\mscr{C}^1_\sbs(\OO,\HH)$ and $\debarI f_I=0$ on $\OO_I$ for all $I\in\sq_\HH$, then $f\in\mc{SR}(\OO,\HH)$.
 \item[$(\mr{ii})$] If $f\in\mscr{C}^0_{\mr{sl}}(\OO,\HH)$, $f_*\in\mscr{C}^1(\OO_*,\HH)$ and $f_*\in\ker(\thetabar)$, then $f\in\mc{SR}(\OO,\HH)$.
\end{itemize}
\end{corollary}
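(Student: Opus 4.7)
The plan is to observe that both assertions are essentially repackagings of Lemma~\ref{lem:sbs}, combined with the equivalence $(\mr{a})\Leftrightarrow(\mr{e})$ of Theorem~\ref{thm:GP2013}. Lemma~\ref{lem:sbs} produces a function that is slice regular in the sense of \cite[Definition 8]{AIM2011}, i.e.\ $f\in\mc{S}^1(\OO,\HH)$ with $\tfrac{\partial f}{\partial x^c}=0$ on $\OO$, which is exactly condition $(\mr{e})$ of Theorem~\ref{thm:GP2013}; hence it also satisfies $(\mr{a})$, namely $f\in\mc{SR}(\OO,\HH)$. Thus in both $(\mr{i})$ and $(\mr{ii})$ it suffices to verify the three hypotheses of Lemma~\ref{lem:sbs}: slice-by-slice continuity of $f$ on $\OO$, slice-by-slice $\mscr{C}^1$ regularity of $f_*$ on $\OO_*$, and the vanishing $\debarI f_I=0$ on $\OO_I\setminus\R$ for every $I\in\sq_\HH$.

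For $(\mr{i})$, the assumption $f\in\mscr{C}^1_\sbs(\OO,\HH)$ means that each $f_I\circ\phi_I$ is of class $\mscr{C}^1$ on $D$; in particular each $f_I$ is continuous, so $f\in\mscr{C}^0_\sbs(\OO,\HH)$. Restricting to $D\setminus\R$, each $f_I\circ\phi_I|_{D\setminus\R}$ remains $\mscr{C}^1$, whence $f_*\in\mscr{C}^1_\sbs(\OO_*,\HH)$. Finally, the hypothesis $\debarI f_I=0$ on $\OO_I$ trivially restricts to $\OO_I\setminus\R$. All three hypotheses of Lemma~\ref{lem:sbs} hold, and we conclude $f\in\mc{SR}(\OO,\HH)$.

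For $(\mr{ii})$, slice-by-slice continuity of $f$ is assumed. Since $f_*\in\mscr{C}^1(\OO_*,\HH)$ implies $f_*\in\mscr{C}^1_\sbs(\OO_*,\HH)$ (the composition with any $\phi_I|_{D\setminus\R}$ is $\mscr{C}^1$), the second hypothesis is immediate. For the Cauchy-Riemann condition, Lemma~\ref{lem:1} with $n=1$ gives $(\thetabar f_*)_I=\debarI (f_*)_I$ on $\OO_I\setminus\R$. Because $(f_*)_I=f_I|_{\OO_I\setminus\R}$ and $\thetabar f_*=0$ on $\OO_*$, we obtain $\debarI f_I=0$ on $\OO_I\setminus\R$ for every $I\in\sq_\HH$. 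Applying Lemma~\ref{lem:sbs} again yields $f\in\mc{S}^1(\OO,\HH)$ with $\tfrac{\partial f}{\partial x^c}=0$, and Theorem~\ref{thm:GP2013} gives $f\in\mc{SR}(\OO,\HH)$.

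I do not expect any real obstacle here, since all technical work was already carried out in Lemma~\ref{lem:sbs} (the `independence from $I$' argument leading to the representation formula and the existence of a stem function) and in Lemma~\ref{lem:1} (identifying $\thetabar$ with $\debarI$ slice by slice on $\OO_I\setminus\R$). The only point worth verifying carefully is the compatibility of the restriction operation with the slice decomposition, i.e.\ $(f_*)_I=f_I|_{\OO_I\setminus\R}$, which is tautological from the definitions in \eqref{def:OO_I}, \eqref{def:f_I}, \eqref{def:OO_*} and \eqref{def:f_*}.
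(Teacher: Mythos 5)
Your proposal is correct and follows essentially the same route as the paper: both parts are reduced to the hypotheses of Lemma~\ref{lem:sbs} (using Lemma~\ref{lem:1} with $n=1$ to convert $f_*\in\ker(\thetabar)$ into $\debarI f_I=0$ on $\OO_I\setminus\R$ in part $(\mr{ii})$), followed by implication $(\mr{e})\Rightarrow(\mr{a})$ of Theorem~\ref{thm:GP2013}. No gaps.
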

\begin{proof}
$(\mr{i})$ Note that, if $f\in\mscr{C}^1_\sbs(\OO,\HH)$, then $f$ belongs to $\mscr{C}^0_\sbs(\OO,\HH)$ and $f_*\in\mscr{C}^1_\sbs(\OO_*,\HH)$. Thus, Lemma \ref{lem:sbs} implies that $f\in\mc{S}^1(\OO,\HH)$ and $\frac{\partial f}{\partial x^c}=0$ on $\OO$. Thanks to the implication $(\mr{e})\Rightarrow(\mr{a})$ of Theorem \ref{thm:GP2013}, we have that $f\in\mc{SR}(\OO,\HH)$.

$(\mr{ii})$ Suppose that $f\in\mscr{C}^0_{\mr{sl}}(\OO,\HH)$, $f_*\in\mscr{C}^1(\OO_*,\HH)$ and $f_*\in\ker(\thetabar)$; evidently,  $f_*\in\mscr{C}^1_\sbs(\OO_*,\HH)$ as well. By Lemma \ref{lem:1} with $n=1$, $\debarI f_I=0$ on $\OO_I\setminus\R$. As in the proof of the preceding point, Lemma \ref{lem:sbs} and implication $(\mr{e})\Rightarrow(\mr{a})$ of Theorem \ref{thm:GP2013} imply that $f\in\mc{SR}(\OO,\HH)$. 
\end{proof}

\begin{proof}[Proof of Theorem \ref{thm:GP2013-sbs}]
Implication $(\mr{a})\Rightarrow(\mr{a}')$ is an immediate consequence of the definition of $\mc{SR}(\OO,\HH)$, because $\mscr{C}^1(\OO,\HH)\subset\mscr{C}^1_\sbs(\OO,\HH)$. Implication $(\mr{a})\Rightarrow(\mr{b}')$ follows immediately from implication $(\mr{a})\Rightarrow(\mr{b})$ of Theorem \ref{thm:GP2013}, because $\mscr{C}^0(\OO,\HH)\subset\mscr{C}^0_\sbs(\OO,\HH)$. Implications $(\mr{a}')\Rightarrow(\mr{a})$ and $(\mr{b}')\Rightarrow(\mr{a})$ coincide with points $(\mr{i})$ and $(\mr{ii})$ of Corollary \ref{cor:sbs}, respectively.
\end{proof}


\subsection{Proofs of Theorem \ref{main} and Corollary \ref{cor:main}}

\begin{lemma}\label{lem:leibniz}
The following assertions hold.
\begin{itemize}
 \item[$(\mr{i})$] Let $f,g\in\mscr{C}^1_\sbs(\OO,\HH)$, let $fg\in\mscr{C}^1_\sbs(\OO,\HH)$ be their pointwise product and let $I\in\sq_\HH$. Suppose that $f(\OO_I)\subset\C_I$. Then $\debarI(fg)_I=(\debarI f_I)g_I+f_I(\debarI g_I)$. 
 \item[$(\mr{ii})$] Let $h\in\N^*$, let $g\in\mscr{C}^1_\sbs(\OO,\HH)$ and let $\overline{x}^hg\in\mscr{C}^1_\sbs(\OO,\HH)$ be the function $x\mapsto\overline{x}^hg(x)$. Choose $I\in\sq_\HH$ arbitrarily. Then $\debarI(\overline{x}^hg)_I=h\overline{x_I}^{h-1}g_I+\overline{x_I}^h\,\debarI g_I$, where $x_I:\OO_I\hookrightarrow\HH$ is the inclusion map and $\overline{x_I}^0:=1$.
  \item[$(\mr{iii})$] Let $h\in\N^*$, let $g\in\mscr{C}^1(\OO_*,\HH)$ and let $\overline{x}_*^h g\in\mscr{C}^1(\OO_*,\HH)$ be the function $x\mapsto\overline{x}^h g(x)$. Then $\thetabar(\overline{x}_*^h g)=h\overline{x}_*^{h-1}g+\overline{x}_*^h\,\thetabar g$ on $\OO_*$, where $\overline{x}_*^0:=1$.
 \item[$(\mr{iv})$] Let $h\in\N^*$ and let $g\in\mc{S}^1(\OO,\HH)$. Then $\overline{x}^h g\in\mc{S}^1(\OO,\HH)$ and $\frac{\partial({x}^h g)}{\partial x^c}=h\overline{x}^{h-1}g+\overline{x}^h\frac{\partial g}{\partial x^c}$ on $\OO$, where $\overline{x}^0:=1$.
\end{itemize} 
\end{lemma}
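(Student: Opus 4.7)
For part (i), the plan is to write out the definition $\debarI(fg)_I(x)=\frac{1}{2}\bigl((\partial_\alpha+I\partial_\beta)((fg)\circ\phi_I)\bigr)(\phi_I^{-1}(x))$, apply the usual real-variable Leibniz rule to both $\partial_\alpha$ and $\partial_\beta$ of the product $(f\circ\phi_I)(g\circ\phi_I)$, and then regroup. The only non-trivial step is that the term $I\,(f\circ\phi_I)\,\partial_\beta(g\circ\phi_I)$ must be rewritten as $(f\circ\phi_I)\,I\,\partial_\beta(g\circ\phi_I)$; the hypothesis $f(\OO_I)\subset\C_I$ is exactly what guarantees $I(f\circ\phi_I)=(f\circ\phi_I)I$, since every element of $\C_I$ commutes with $I$. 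After this swap the terms reassemble into $(\debarI f_I)g_I+f_I(\debarI g_I)$.

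For part (ii), I apply part (i) with $f(x)=\overline{x}^h$, viewed as a polynomial (hence slice-by-slice $\mscr{C}^\infty$) function $\OO\to\HH$. The inclusion $f(\OO_I)\subset\C_I$ is clear, since $x=\alpha+I\beta\in\C_I$ implies $\overline{x}=\alpha-I\beta\in\C_I$. It remains to compute $\debarI\overline{x_I}^{\,h}$. The base case is a direct calculation: $\debarI\overline{x_I}=\frac{1}{2}(\partial_\alpha+I\partial_\beta)(\alpha-I\beta)=\frac{1}{2}(1+I(-I))=1$. The inductive step $\debarI\overline{x_I}^{\,k}=\debarI(\overline{x_I}\cdot\overline{x_I}^{\,k-1})$ uses the slice-level version of (i) with $\overline{x_I}$ in place of $f$ (whose image lies in $\C_I$), giving $\debarI\overline{x_I}^{\,k}=\overline{x_I}^{\,k-1}+\overline{x_I}\cdot(k-1)\overline{x_I}^{\,k-2}=k\,\overline{x_I}^{\,k-1}$.

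For part (iii), the cleanest route is via Lemma \ref{lem:1} with $n=1$: on every slice $\OO_I\setminus\R$, the operator $\thetabar$ coincides with $\debarI$ applied to the restriction. The restriction of $\overline{x}_*^h$ to $\OO_I\setminus\R$ is precisely $\overline{x_I}^{\,h}$, whose image lies in $\C_I$, so the proofs of (i) and (ii) carry over verbatim to the open subset $\OO_I\setminus\R$ (these arguments only need the hypotheses on the given slice, not globally on $\OO$), yielding $\debarI(\overline{x_I}^{\,h}g_I)=h\,\overline{x_I}^{\,h-1}g_I+\overline{x_I}^{\,h}\debarI g_I$ on $\OO_I\setminus\R$. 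Converting back through Lemma \ref{lem:1} gives the claimed identity pointwise on $\OO_*=\bigcup_I(\OO_I\setminus\R)$.

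For part (iv), let $g=\I(G_1+\ui G_2)$. Writing $(\alpha+i\beta)^h=p_h(\alpha,\beta)+iq_h(\alpha,\beta)$ with $p_h,q_h\in\R[\alpha,\beta]$, a basic polynomial identity gives that $p_h$ is even and $q_h$ is odd in $\beta$. Expanding $\overline{x}^hg$ on $\OO_I$ and using that $p_h,q_h$ are real (hence central) together with $I^2=-1$, one finds $(\overline{x}^hg)(\alpha+I\beta)=H_1(\alpha+i\beta)+IH_2(\alpha+i\beta)$ with $H_1=p_hG_1+q_hG_2$ and $H_2=p_hG_2-q_hG_1$. The required even-odd properties of $H=H_1+\ui H_2$ follow at once from those of $(G_1,G_2)$ and $(p_h,q_h)$, and $H$ is of class $\mscr{C}^1$ since $G$ is; thus $\overline{x}^hg=\I(H)\in\mc{S}^1(\OO,\HH)$. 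The Leibniz identity is then proved by restriction to an arbitrary $\OO_I$: both sides are slice functions (the right-hand side by the same construction applied to $\overline{x}^{h-1}g$ and to $\overline{x}^h\frac{\partial g}{\partial x^c}$), Lemma \ref{lem:de-n} with $n=1$ converts $\frac{\partial}{\partial x^c}$ into $\debarI$ on the slice, and part (ii) handles the resulting slice-wise identity; the uniqueness of the stem function (recorded just after equation \eqref{eq:stem}) then forces the two slice functions to coincide on all of $\OO$. The main technical hurdle will be the bookkeeping in (iv) when separating real factors from quaternionic ones to identify the stem function explicitly; once that is done, all four parts reduce to the standard Leibniz mechanism enabled by the commutativity observation in (i).
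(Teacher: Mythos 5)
Your proposal is correct and follows essentially the same route as the paper: (i) via the real Leibniz rule plus the commutation $I(f\circ\phi_I)=(f\circ\phi_I)I$ forced by $f(\OO_I)\subset\C_I$, (ii) by induction on $h$ through (i), (iii) by transporting (ii) to $\OO_*$ through Lemma \ref{lem:1} with $n=1$, and (iv) by reducing to (ii) slice-by-slice through Lemma \ref{lem:de-n}. The only (harmless) difference is in (iv), where you verify explicitly that $\overline{x}^hg$ is induced by the stem function with components $H_1=p_hG_1+q_hG_2$ and $H_2=p_hG_2-q_hG_1$ --- which is exactly the pointwise product $\overline{z}^hG$ --- whereas the paper simply cites this product structure from \cite[Section 5, Remark 7]{AIM2011}.
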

\begin{proof}
$(\mr{i})$ Define $\widehat{f}_I,\widehat{g}_I:D\to\HH$ by $\widehat{f}_I(z):=f(x_I)$ and $\widehat{g}_I(z):=g(x_I)$ for all $z=\alpha+i\beta\in D$ with $\alpha,\beta\in\R$ and $x_I:=\alpha+I\beta$. Since $f(\OO_I)\subset\C_I$, it holds $I\widehat{f}_I=\widehat{f}_II$ on $D$. Thus,
\begin{align*}
2\debarI (fg)_I(x_I)&=2\debarI (f_Ig_I)(x_I)=\frac{\partial(\widehat{f}_I\widehat{g}_I)}{\partial\alpha}(z)+I\frac{\partial(\widehat{f}_I\widehat{g}_I)}{\partial\beta}(z)=\\
&=\frac{\partial \widehat{f}_I}{\partial\alpha}(z)\widehat{g}_I(z)+\widehat{f}_I(z)\frac{\partial \widehat{g}_I}{\partial\alpha}(z)+I\frac{\partial \widehat{f}_I}{\partial\beta}(z)\widehat{g}_I(z)+I\widehat{f}_I(z)\frac{\partial\widehat{g}_I}{\partial\beta}(z)=\\
&=2(\debarI f_I(x_I))\widehat{g}_I(z)+\widehat{f}_I(z)\frac{\partial \widehat{g}_I}{\partial\alpha}(z)+\widehat{f}_I(z)I\frac{\partial\widehat{g}_I}{\partial\beta}(z)=\\
&=2(\debarI f_I(x_I))\widehat{g}_I(z)+2\widehat{f}_I(z)(\debarI g_I(x_I))=2(\debarI f_I(x_I))g_I(x_I)+2f_I(x_I)(\debarI g_I(x_I)).
\end{align*}

$(\mr{ii})$ Since $\overline{x_I}^h\in\C_I$ for all $x_I\in\OO_I$ and for all $h\in\N^*$, by the preceding point, it suffices to show that $\debarI(\overline{x_I}^h)=h\overline{x}_I^{h-1}$. Let us proceed by induction on $h\geq1$. First, consider the case $h=1$. We have: $\debarI \overline{x_I}=\frac{1}{2}\big(\frac{\partial (\alpha-I\beta)}{\partial\alpha}+I\frac{\partial (\alpha-I\beta)}{\partial\beta}\big)=\frac{1}{2}(1-I^2)=1$, as desired. Let $h\geq2$. Using the induction hypothesis and the preceding point again, we obtain: $\debarI(\overline{x_I}^h)=\debarI(\overline{x_I}\,\overline{x_I}^{h-1})=(\debarI\overline{x_I})\overline{x_I}^{h-1}+\overline{x_I}\debarI\overline{x_I}^{h-1}=\overline{x_I}^{h-1}+\overline{x_I}(h-1)\overline{x_I}^{h-2}=h\overline{x_I}^{h-1}$.

$(\mr{iii})$ This point follows at once from Lemma \ref{lem:1} with $n=1$ and the preceding point applied slice-by-slice.

$(\mr{iv})$ If $g=\I(G)$, then $\overline{x}^h g=\I(\overline{z}^h G)$, where $\overline{z}^h G:D\to\HH\otimes_\R\C$ is the stem function obtained via the pointwise product of $z\mapsto\overline{z}^h$ and $z\mapsto G(z)$, see \cite[Section 5, especially Remark 7]{AIM2011}. Now formula $\frac{\partial({x}^h g)}{\partial x^c}=h\overline{x}^{h-1}g+\overline{x}^h\frac{\partial g}{\partial x^c}$ follows at once from Lemma \ref{lem:de-n} with $n=1$ and the preceding point $(\mr{ii})$ applied slice-by-slice.
\end{proof}

\begin{proof}[Proof of Theorem \ref{main}]

$(\mr{c})\Leftrightarrow(\mr{c}')\Leftrightarrow(\mr{c}'')$ These equivalences follow immediately from Lemmas \ref{lem:1} and \ref{lem:de-n} and the density of $\OO_I\setminus\R$ in $\OO_I$.  

$(\mr{a})\Rightarrow(\mr{c})\,\&\,(\mr{b})$ Let $f\in\mc{SP}^n(\OO,\HH)$. Write $f$ as follows: $f=\sum_{h=0}^{n-1}\overline{x}^hf_h$ on $\OO$, where $f_0,\ldots,f_{n-1}\in\mc{SR}(\OO,\HH)$. By implication $(\mr{a})\Rightarrow(\mr{e})$ of Theorem \ref{thm:GP2013}, we know that each $f_h$ belongs to $\mc{S}^1(\OO,\HH)$ and, if $F^{(h)}:D\to\HH\otimes_\R\C$ is the stem function inducing $f_h$, then $F^{(h)}$ is holomorphic. In particular, $\sum_{h=0}^{n-1}\overline{z}^hF^{(h)}:D\to\HH\otimes_\R\C$ is a stem function of class $\mscr{C}^n$ ($\mscr{C}^\omega$ indeed), whose induced slice function coincides with $f$. Thus, $f$ belongs to $\mc{S}^n(\OO,\HH)$. Let $I\in\sq_\HH$ and let $f_{h,I}:\OO_I\to\HH$ be the restriction of each $f_h$ to $\OO_I$. Set $x_I:=\alpha+I\beta\in\OO_I$ with $\alpha,\beta\in\R$. Bearing in mind that $\debarI f_{h,I}=0$ on $\OO_I$ for all $h\in\{0,\ldots,n-1\}$, an iterated application of Lemma \ref{lem:leibniz}($\mr{ii}$) implies that
\[\textstyle
\debarI^{\,l} f_I=\debarI^{\,l}\big(\sum_{h=0}^{n-1}\overline{x}_I^hf_{h,I}\big)=\sum_{h=l}^{n-1}h(h-1)\cdots(h-l+1)\overline{x}_I^{h-l}f_{h,I} \quad\text{on $\OO_I$}
\]
for all $l\in\{0,\ldots,n-1\}$. In particular, it follows that $\debarI^{\,n-1}f_I=(n-1)!f_{n-1,I}$ and hence $\debarI^{\,n} f_I=(n-1)!\,\debarI f_{n-1,I}=0$ on $\OO_I$. This proves that $\debarIn f_I=0$ on $\OO_I$ for all $I\in\sq_\HH$. 

Now, using implication $(\mr{a})\Rightarrow(\mr{c})$ of Theorem \ref{thm:GP2013} and an iterated application of Lemma \ref{lem:leibniz}$(\mr{iii})$, we obtain that $\thetabar^n f_*=0$ on $\OO_*$.

$(\mr{c}'')\Rightarrow(\mr{a})$ Let us proceed by induction on $n\in\N^*$, following the strategy of \cite[Section 1.1, top of page 11]{balk}. The case $n=1$ coincides with implication $(\mr{e})\Rightarrow(\mr{a})$ of Theorem \ref{thm:GP2013}, because $\mc{SP}^1(\OO,\HH)=\mc{SR}(\OO,\HH)$. Let $n\geq2$. Note that $\frac{\partial f}{\partial x^c}\in\mc{S}^{n-1}(\OO,\HH)$ and $\big(\frac{\partial}{\partial x^c}\big)^{n-1}\big(\frac{\partial f}{\partial x^c}\big)=\big(\frac{\partial}{\partial x^c}\big)^nf=0$ on $\OO$. By induction hypothesis, $\frac{\partial f}{\partial x^c}=\sum_{h=0}^{n-2}\overline{x}^hf_h$ on $\OO$ for some $f_h\in\mc{SR}(\OO,\HH)$.  Combining implication $(\mr{a})\Rightarrow(\mr{e})$ of Theorem \ref{thm:GP2013} with Lemma \ref{lem:leibniz}$(\mr{iv})$, we deduce that $\big(\frac{\partial}{\partial x^c}\big)\big(\sum_{h=0}^{n-2}\frac{1}{h+1}\overline{x}^{h+1}f_h\big)=\sum_{h=0}^{n-2}\overline{x}^hf_h$ on $\OO$. Thus, $\big(\frac{\partial}{\partial x^c}\big)\big(f-\sum_{h=0}^{n-2}\frac{1}{h+1}\overline{x}^{h+1}f_h\big)=0$ on $\OO$. Implication $(\mr{e})\Rightarrow(\mr{a})$ of Theorem \ref{thm:GP2013} ensures that $f-\sum_{h=0}^{n-2}\frac{1}{h+1}\overline{x}^{h+1}f_h\in\mc{SR}(\OO,\HH)$ and hence $f\in\mc{SP}^n(\OO,\HH)$.

$(\mr{b})\Rightarrow(\mr{a})$ Let us proceed by induction on $n\in\N^*$. The case $n=1$ follows immediately from Lemma \ref{lem:reg-n} (with $n=0$) and implication $(\mr{b}')\Rightarrow(\mr{a})$ of Theorem \ref{thm:GP2013-sbs}. Let $n\geq2$. Let $f\in\mc{S}^{n-1}(\OO,\HH)\subset\mc{S}^1(\OO,\HH)$ such that $f_*\in\mscr{C}^n(\OO_*,\HH)$ and $\thetabar^nf_*=0$ on $\OO_*$; in particular, $\frac{\partial f}{\partial x^c}\in\mc{S}^{n-2}(\OO,\HH)$. Let $F:D\to\HH\otimes_\R\C$ be the stem function of class $\mscr{C}^{n-1}$ inducing $f$. Since $f_*\in\mscr{C}^n(\OO_*,\HH)$, formula \eqref{eq:stem} (applied with a fixed $I\in\sq_\HH$) implies that $F|_{D\setminus\R}$ is of class~$\mscr{C}^n$ and hence $\frac{\partial F}{\partial\overline{z}}\big|_{D\setminus\R}$ is of class $\mscr{C}^{n-1}$. Applying Remark \ref{rem:reg-n} to $\frac{\partial F}{\partial\overline{z}}$, we obtain that $\frac{\partial f}{\partial x^c}\big|_{\OO_*}\in\mscr{C}^{n-1}(\OO_*,\HH)$. Applying Lemmas \ref{lem:1} and \ref{lem:de-n} (with $n=1$) slice-by-slice, it follows that $\frac{\partial f}{\partial x^c}\big|_{\OO_*}=\thetabar f_*$ on~$\OO_*$. As a consequence, we have: $\thetabar^{n-1}\big(\frac{\partial f}{\partial x^c}\big|_{\OO_*}\big)=\thetabar^{n-1}(\thetabar f_*)=\thetabar^nf_*=0$ on~$\OO_*$. We have just proved that $\frac{\partial f}{\partial x^c}\in\mc{S}^{n-2}(\OO,\HH)$, $\frac{\partial f}{\partial x^c}\big|_{\OO_*}\in\mscr{C}^{n-1}(\OO_*,\HH)$ and $\frac{\partial f}{\partial x^c}\big|_{\OO_*}\in\ker(\thetabar^{n-1})$. By the induction hypothesis, it turns out that $\frac{\partial f}{\partial x^c}=\sum_{h=0}^{n-2}\overline{x}^hf_h$ on $\OO$ for some $f_0,\ldots,f_{n-2}\in\mc{SR}(\OO,\HH)$. Thus, bearing in mind implication $(\mr{a})\Rightarrow(\mr{e})$ of Theorem \ref{thm:GP2013} and \cite[Section 5, especially Remark 7]{AIM2011}, we have that the function $g:=f-\sum_{h=0}^{n-2}\frac{1}{h+1}\overline{x}^{h+1}f_h$ belongs to $\mc{S}^1(\OO,\HH)$. Moreover, by Lemma~\ref{lem:leibniz}$(\mr{iv})$, it holds $\frac{\partial g}{\partial x^c}=0$ on $\OO$. Implication $(\mr{e})\Rightarrow(\mr{a})$ of Theorem \ref{thm:GP2013} implies that $g\in\mc{SR}(\OO,\HH)$, completing the proof.
\end{proof}

\begin{proof}[Proof of Corollary \ref{cor:main}]
This result follows immediately from equivalence $(\mr{a})\Leftrightarrow(\mr{c}')$ of Theorem \ref{main}, and Lemma \ref{lem:r-a-spn}.
\end{proof}


\vspace{1em}

\noindent {\bf Acknowledgement.} This work was supported by GNSAGA of INdAM.



\end{document}